\documentclass[reqno,11pt]{amsart}

\usepackage{graphicx}
\usepackage{relsize}
\usepackage[ansinew]{inputenc}
\usepackage{amsfonts,epsfig}
\usepackage{latexsym}
\usepackage{mathabx}
\usepackage{amsmath}
\usepackage{amssymb}
\usepackage{color}
\usepackage{hyperref}

\newtheorem{theorem}{Theorem}
\newtheorem{lemma}[theorem]{Lemma}
\newtheorem{corollary}[theorem]{Corollary}

\newtheorem{proposition}[theorem]{Proposition}

\theoremstyle{definition}

\theoremstyle{remark}

\numberwithin{equation}{section}

\numberwithin{equation}{section}

\newcommand{\D}{\mathbb{D}}

\renewcommand{\phi}{\varphi}

\newcommand{\T}{\mathbb{T}}

\begin{document}
\title[Localization and weighted theory on the Bergman spaces]{Localization and weighted theory on the Bergman spaces}

\keywords{Berezin transform, Bergman projection, Bergman space, doubling weight, Toeplitz algebra, weakly localized operator.}
	
\thanks{The research is partially supported by National Natural Science Foundation of China
(Grant Numbers No. 12571136 and No.12471116) and 2025CDJ-IAIS YB-004 (Chongqing University).}

\makeatletter
\@namedef{subjclassname@2020}{\textup{2020} Mathematics Subject Classification}
\makeatother
\subjclass[2020]{47B35, 30H20}

\author[Yongjiang Duan]{Yongjiang Duan}
\address[Yongjiang Duan]{Department of Mathematics\\
		Jinan University\\ Guangzhou\\ Guangdong 510632\\ P.R.China}
	\email{yjduan@jnu.edu.cn}
	
	\author[Junhan Hong]{Junhan Hong}
	\address[Junhan Hong]{School of Mathematics and Statistics\\
		Northeast Normal University\\Changchun\\Jilin 130024\\ P.R.China}
	\email{hongjh815@nenu.edu.cn}

\author[Siyu Wang]{Siyu Wang}
\address[Siyu Wang]{Department of Physics and Mathematics\\
		 University of Eastern Finland\\ P.O.Box 111\\ FI-80101 Joensuu\\ Finland}
	\email{siyu.wang@uef.fi; wangsy696@nenu.edu.cn}

\author[Zipeng Wang]{Zipeng Wang}
\address [Zipeng Wang]{College of Mathematics and Statistics, Chongqing University, Chongqing,
401331, P.R.China}
\email{zipengwang@cqu.edu.cn}
	
\begin{abstract}
In this paper, we introduce the class of weakly localized operators on the weighted Bergman spaces induced by two-sided doubling weights. We first establish the characterization of the compactness of the operator belonging to the norm closure of weakly localized operators in terms of the boundary vanishing of the Berezin transform. Our approach here is different from the methods in the literature since the  ``translation" operators are not served us a direct applicable tool in our setting. We further characterize the compactness of Toeplitz operators with bounded symbols in the two-weight setting, a boundedness criterion of a two-weight Bergman projection is also presented.
\end{abstract}

\maketitle

\section{Introduction and main results}

The compactness of Toeplitz operators on Bergman spaces has been a central topic in operator theory for several decades. A fundamental result for the unweighted Bergman space asserts that a bounded Toeplitz operator is compact if and only if its Berezin transform vanishes at the boundary of the unit disk \cite{AZ}. This criterion, first established through the Berezin transform, and later extended via the theory of weakly localized operators, has served as a prototype for studying compactness characterization in more general function spaces, for example, Fock spaces and weighted Bergman spaces, see \cite{Cj,Englis,IMW,SW,XZ} for more details.

Weights in harmonic analysis naturally originated in the study of Hardy-Littlewood maximal operators on weighted Lebesgue spaces and are closely related to a large class of singular integral operators, including the Hilbert transform and Riesz transforms, and hence play a fundamental role in modern harmonic analysis. They also provide important connections among harmonic analysis, partial differential equations, geometry, potential theory, probability and other branches of mathematics. Recently, weights have become increasingly important in operator theory.
A notable example is the Sarason conjecture on the Bergman space. Aleman, Pott and Reguera \cite{APR1} showed that the boundedness of the associated Toeplitz products is equivalent to a two-weight boundedness problem for the Bergman projection with a special pair of weights. They also constructed a counterexample to the conjecture using a dyadic model approach. The two-weight boundedness problem for the Bergman projection with arbitrary weights, however, remains open in general.
This question have explicit solution in certain radial settings, see \cite{PR4,PR2} for example.

In this paper, we consider weights $\omega$ on the unit disk $\mathbb{D}$ which are nonnegative functions and integrable with respect to the Lebesgue measure. A weight $\omega$ is radial if $\omega(z)
=\omega(|z|)$ for all $z\in\mathbb{D}$. Several classes of radial weights on the unit disk have been introduced and investigated in the context of analytic function spaces and related operator theory. The radial doubling weights, denoted by $\widehat{\mathcal{D}}$, constitute an important class for which many desirable properties of the classical Bergman theory remain valid, see for instance \cite{LW-,Pe,PR5, PR4,PR2,PRWW} and the references therein. Recall that a radial weight $\omega \in \widehat{\mathcal{D}}$ if $\widehat{\omega}(z)=\int_{|z|}^{1}\omega(s)ds$ satisfies the following condition, i.e., there exists $C=C(\omega)\geq1$ such that
$$
\widehat{\omega}(r)\leq C\widehat{\omega}\left(\frac{1+r}{2}\right),\quad{0\leqslant r<1}.
$$
If there exist $K=K(\omega)>1$ and $C=C(\omega)>1$ such that
\begin{equation}\label{Ddef}
\widehat{\omega}(r)\geq C\widehat{\omega}\left(1-\frac{1-r}{K}\right),\quad{0\leqslant r<1},
\end{equation} then we say $\omega\in \widecheck {\mathcal{D}}$.
Moreover, let $\mathcal{D}=\widehat{\mathcal{D}}\cap \widecheck{\mathcal{D}}$. Throughout the paper, we assume $\widehat{\omega}(r)>0$ for all $0\leq r<1$.

For $0<p<\infty$ and a weight $\omega$ on $\mathbb{D}$, the space $L_{\omega}^{p}$ consists of functions $f$ on $\mathbb{D}$ such that
   $$
   \|f\|_{L_{\omega}^{p}}=\left(\int_{\D}|f(z)|^{p}\omega(z)dA(z)\right)^{\frac{1}{p}}<\infty,
   $$
where $dA$ is the normalized area measure on $\mathbb{D}$. The weighted Bergman space induced by the weight $\omega$ is defined by $A_\omega^p= L_\omega^p\cap H(\mathbb{D})$, where $ H(\mathbb{D})$ denotes the space of all analytic functions on $\mathbb{D}$.
As usual, we write $A^p_{\alpha}$ for the standard weighted Bergman space when $\omega(z)=(\alpha+1)(1-|z|^2)^\alpha$ with $-1<\alpha<\infty$. When $\omega$ is a radial weight, $A_\omega ^2$ is a reproducing kernel Hilbert space whose reproducing kernels are given by
$$B_z^{\omega}(\zeta)=\sum\limits_{m=0}^{\infty}\frac{(\bar{z}\zeta)^m}{2\omega_{2m+1}},$$ where $$\omega_{x}=\int_{0}^{1} t^{x} \omega(t)dt,\quad 0< x<\infty.$$ The normalized reproducing kernel is written as $b_{p,z}^\omega =B_z^\omega/\|B_z^\omega\|_{A_\omega^p}$. We simply write $b_z^\omega$ for $p=2$.

Recall that for a finite Borel measure $\mu$, the Toeplitz operator $T^\omega_{\mu}$ induced by $\mu$ is densely defined by
$$
T^\omega_{\mu}(g)(z)=\int_{\mathbb{D}} g(\zeta)\overline{B_z^\omega(\zeta)}d\mu(\zeta), \quad g\in H^\infty,
$$
where $H^\infty$ is the space of all bounded analytic functions on $\D$.
When $d\mu=f\omega dA$ for $f\in L_\omega^1$, we write $T^\omega_{\mu}=T^\omega_f$, which is densely defined as
$$
T_f^\omega(g)(z)=\int_{\mathbb{D}}f(\zeta)g(\zeta)\overline{B_z^\omega(\zeta)}\omega(\zeta)dA(\zeta), \quad g\in H^\infty.
$$
The Berezin transform of a bounded linear operator $T:A_\omega^2\rightarrow A_\omega^2$ is
$$
\widetilde{T}(z)=\langle T(b_z^\omega),b_z^\omega \rangle_{A_\omega^2}.
$$
The Berezin transform is a fundamental and important tool in the study of the Toeplitz operators on the classical Bergman space, see \cite{Zhu2} for example.

In this paper, we concentrate on the class of operators that are ``localized".
In \cite{AZ}, Axler and Zheng actually employed the idea of operator localization, though not explicitly named as such at the time, showing that for any operator $T$ that is a finite sum of finite products of Toeplitz operators with $L^{\infty}$-symbols, compactness of $T$ on the classical Bergman space $A^2$ is equivalent to the vanishing of its Berezin transform $\widetilde{T}$ at the boundary of the unit disk. Very recently, Li and Wang \cite{LW} established this result for weighted Bergman space induced by $\widehat{\mathcal{D}}$ weights. In \cite{Su}, Su\'{a}rez showed that a bounded operator $T$ on the Bergman space is compact if and only if $T$ belongs to the Toeplitz algebra $\mathcal{T}$ (the closed algebra generated by Toeplitz operators with bounded symbols) and $\widetilde{T}(z) \rightarrow 0$ as $|z| \rightarrow 1$, where the proof again relied on localization techniques. This result was subsequently extended by Mitkovski, Su\'{a}rez and Wick in \cite{MSW} to the setting of standard weighted Bergman spaces on the unit ball in $\mathbb{C}^n$.

Localization was first discovered in \cite{XZ} as a powerful tool for analyzing operators on reproducing kernel Hilbert spaces. Isralowitz, Mitkovski and Wick \cite{IMW} further explored this idea by introducing the notion of weakly localized operators on the Bergman space. Moreover, they demonstrated that the norm closure of the weakly localized operators contains the Toeplitz algebra and that, within this closure, compactness is equivalent to the vanishing of its Berezin transform at the boundary. Later in the same year, Xia \cite{Xia} proved that the Toeplitz algebra coincides with the norm closure of the $*$-algebra of weakly localized operators, presenting the importance of weakly localized operators in relating compactness to the boundary vanishing of the Berezin transform. Recently, Dewage \cite{Dewage} generalized Xia's result to bounded symmetric domains by using representation theory and quantum harmonic analysis technology.

In this paper, we introduce weakly localized operators on weighted Bergman spaces induced by $\mathcal{D}$ weights and investigate the corresponding compactness phenomena. We prove that the Toeplitz algebra is contained in the norm closure of the weakly localized operators on weighted Bergman spaces induced by $\mathcal{D}$ weights, retaining the feature discovered in \cite{IMW}. However, this may fail when it comes to the class $\widehat{\mathcal{D}}$, for example, let
 $\omega(r)=(1-r)^{-1} (\log\frac{e}{1-r})^{-2}$ for $0\leq r<1$, a weight which belongs to $\widehat{\mathcal{D}}\setminus \mathcal{D}$. Indeed, every operator $T$ belonging to the norm of closure of weakly localized operators is bounded on $A_{\omega}^{p}$ and satisfies $Tb_0^\omega=0$, whereas the identity operator $I=T_1^\omega$ belongs to the Toeplitz algebra but does not satisfy this property.

In order to introduce the weakly localized operators on weighted Bergman spaces induced by~$\mathcal{D}$ weights,
we first recall one useful characterization for weights in $\widecheck{\mathcal{D}}$, presented in \cite[Lemma B]{PRS}, showing that for a radial weight $\omega$, then $\omega \in \widecheck{\mathcal{D}}$ if and only if there exist $C=C(\omega)>0$ and $\alpha_0(\omega)>0$ such
that
	\begin{equation}\label{Dcheck}
	\widehat{\omega}(t) \leqslant C\left(\frac{1-t}{1-r}\right)^{\alpha_0(\omega)} \widehat{\omega}(r), \quad 0 \leqslant r \leqslant t<1.
	\end{equation}
Since this inequality remains valid for every smaller positive exponent, throughout this paper we fix $0<\alpha(\omega)<\min\{1,\alpha_0(\omega)  \}$ and use $\alpha(\omega)$ in place of $\alpha_0(\omega)$ in \eqref{Dcheck}.
Denote $\gamma(\omega)=\frac{1-\alpha(\omega)}{1+\alpha(\omega)}.$
Let $1<p<\infty$, $p'$ denote the conjugate index of $p$ and $\omega\in\mathcal{D}$.
A linear operator $T$, defined on the linear span of the reproducing kernels, is said to be weakly localized for $A_\omega^p$, denoted by $T\in \mathrm{WL}_\omega^p$, if there exists
 \begin{equation}\label{delta}
 0<\delta<\left \{
\begin{aligned}
&p\big(1-\gamma(\omega)\big), &&  1<p\leq 2, \\
&p'\big(1-\gamma(\omega)\big), &&  p>2,
\end{aligned}
\right.
\end{equation} such that the following conditions hold,
\begin{equation}\label{WL1}
\sup_{z\in\mathbb{D}}\int_{\mathbb{D}}|\langle Tb_{z}^{\omega}, b_{\zeta}^{\omega}\rangle_{A_\omega^{2}}|
\frac{{\|B_{z}^{\omega}\|}^{1-\frac{\delta}{p'}}_{A_\omega^2}}{{\|B_{\zeta}^{\omega}\|}^{1-\frac{\delta}{p'}}_{A_\omega^2}}
d\lambda_\omega(\zeta)< \infty,
\end{equation}
\begin{equation}\label{WL2}
\sup_{z\in\mathbb{D}}\int_{\mathbb{D}}|\langle T^{*}b_{z}^{\omega}, b_{\zeta}^{\omega}\rangle_{{A_\omega^2}}|
\frac{{\|B_{z}^{\omega}\|}^{1-\frac{\delta}{p}}_{A_\omega^2}}{{\|B_{\zeta}^{\omega}\|}^{1-\frac{\delta}{p}}_{A_\omega^2}}
d\lambda_\omega(\zeta)< \infty;
\end{equation}
and
\begin{equation}\label{WL3}
\lim_{r\rightarrow \infty}\sup_{z\in\mathbb{D}}\int_{\mathbb{D}\setminus D(z,r)}|\langle Tb_{z}^{\omega}, b_{\zeta}^{\omega}\rangle_{A_\omega^{2}}|
\frac{{\|B_{z}^{\omega}\|}^{1-\frac{\delta}{p'}}_{A_\omega^2}}{{\|B_{\zeta}^{\omega}\|}^{1-\frac{\delta}{p'}}_{A_\omega^2}}
d\lambda_\omega(\zeta)=0,
\end{equation}
$$\label{WL4}
\lim_{r\rightarrow \infty}\sup_{z\in\mathbb{D}}\int_{\mathbb{D}\setminus D(z,r)}|\langle T^{*}b_{z}^{\omega}, b_{\zeta}^{\omega}\rangle_{A_\omega^{2}}|
\frac{{\|B_{z}^{\omega}\|}^{1-\frac{\delta}{p}}_{A_\omega^2}}{{\|B_{\zeta}^{\omega}\|}^{1-\frac{\delta}{p}}_{A_\omega^2}}
d\lambda_\omega(\zeta)=0,
$$
where
$$
d\lambda_\omega(\zeta)=\frac{\omega(\zeta)}{\widehat{\omega}(\zeta)(1-|\zeta|)}dA(\zeta),\quad \zeta\in\mathbb{D},
$$
and $D(z,r)=\{{w\in \mathbb{D}:\beta(w,z)<r}\}$. Here $$\beta(z,w)=\frac{1}{2}\log \frac{1+|\varphi_z(w)|}{1-|\varphi_z(w)|},\quad z,w\in\mathbb D,$$
denotes the Bergman metric in $\mathbb D$, where $\varphi_z(w)=\frac{z-w}{1-\bar{z}w}$.

Now we state our main result on localization, characterizing
the compact operators, within the norm closure of the weakly localized operators on weighted Bergman spaces induced by $\mathcal{D}$ weights, in terms of the vanishing property of the Berezin transform at the boundary of the unit disk.

\begin{theorem}\label{main}
Let $\omega\in \mathcal{D}$ and $1<p<\infty$. If $T$ belongs to the norm closure of $\mathrm{WL}_\omega^p$, then $T$ is compact on $A_\omega^p$ if and only if $\lim\limits_{|z|\to 1^-}\widetilde{T}(z)=0.$
\end{theorem}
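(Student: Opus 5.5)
We follow the scheme of Isralowitz--Mitkovski--Wick, replacing the translation operators $U_z$ by direct kernel estimates valid for $\omega\in\mathcal{D}$. Necessity is routine. If $T$ is compact on $A^p_\omega$, then the normalized kernels $b^\omega_{p,z}$ tend to $0$ weakly in $A^p_\omega$ as $|z|\to1^-$. This uses the kernel norm asymptotics $\|B^\omega_z\|_{A^p_\omega}^p\asymp (\widehat\omega(z)(1-|z|))^{1-p}$ for $\mathcal{D}$ weights, together with duality $(A^p_\omega)^*\simeq A^{p'}_\omega$ under the $A^2_\omega$ pairing. We then write
\[
\widetilde T(z)=\frac{\|B_z^\omega\|_{A^p_\omega}\|B_z^\omega\|_{A^{p'}_\omega}}{\|B_z^\omega\|_{A^2_\omega}^2}\,\langle Tb^\omega_{p,z},b^\omega_{p',z}\rangle .
\]
By those same asymptotics, the prefactor is bounded. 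Since $\|Tb^\omega_{p,z}\|\to0$, we get $\widetilde T(z)\to0$.

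For sufficiency we first reduce to $T\in \mathrm{WL}^p_\omega$ itself. This reduction is not immediate, because a small perturbation of $T$ does not keep the Berezin transform vanishing at the boundary. We therefore prove the following quantitative statement instead. For every $T$ in the norm closure, the essential norm satisfies
\[
\|T\|_{e}\lesssim \limsup_{|z|\to1}\sup_{\|f\|_{A^p_\omega}\le1}\|M_{\chi_{D(z,r)}}Tf\|_{L^p_\omega}+o_r(1),
\]
and the right-hand quantity is then controlled by $\widetilde T$.

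The key steps are as follows.

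(i) \emph{Boundedness.} Operators in $\mathrm{WL}^p_\omega$ are bounded on $A^p_\omega$. We prove this by a Schur test on the kernel $\langle Tb_z^\omega,b_\zeta^\omega\rangle$ against $d\lambda_\omega$, using the reproducing formula $f(\zeta)=\int \langle f,b_z^\omega\rangle b_z^\omega(\zeta)\,d\lambda_\omega$, which holds up to equivalence via an atomic decomposition or frame for $\mathcal{D}$ weights. The Schur test functions are powers $\|B_z^\omega\|^{\delta/p'}$ and $\|B_z^\omega\|^{\delta/p}$. The restriction \eqref{delta} on $\delta$, through $\gamma(\omega)$ and \eqref{Dcheck}, is exactly what makes the needed integrals $\int \|B_z\|^{\pm s}|\langle b_z,b_\zeta\rangle|\,d\lambda_\omega(z)$ converge (a Forelli--Rudin type estimate for $\mathcal{D}$ weights).

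(ii) \emph{Localization.} Fix a lattice $\{a_j\}$ in the Bergman metric and a covering by sets $F_j\subset D(a_j,r)$ with finite overlap. Let $G_j=\{\zeta:\beta(\zeta,F_j)\le R\}$. Using \eqref{WL3} and its adjoint version inside the same Schur test, show that
\[
\Big\|T-\sum_j M_{\chi_{F_j}}TP_\omega M_{\chi_{G_j}}\Big\|_{A^p_\omega\to L^p_\omega}\to0 \quad\text{as } R\to\infty,
\]
uniformly. Since $P_\omega$ is bounded on $L^p_\omega$ for $\omega\in\mathcal{D}$, this passes to the norm closure. Discarding finitely many $j$ costs only a compact operator. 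Hence $\|T\|_e$ is bounded by a constant times $\limsup_{j}\sup_f\|\chi_{F_j}Tf\|/\|\chi_{G_j}f\|$, giving the displayed inequality.

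(iii) \emph{Linking to the Berezin transform.} This is the main obstacle, since without $U_z$ we cannot pull back to a neighborhood of $0$ and use a normal-families argument. The plan is to show directly that if $\widetilde T\to0$, then $\langle Tb_z^\omega,b_\zeta^\omega\rangle\to0$ uniformly for $\beta(z,\zeta)\le R$ as $|z|\to1$.

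First try: view $F(z,\bar w)=\langle TB_w^\omega,B_z^\omega\rangle/(\|B_z\|\|B_w\|)$ locally. The function $\langle TB_w,B_z\rangle$ is sesqui-analytic, and it is determined by its diagonal values $\langle TB_z,B_z\rangle=\|B_z\|^2\widetilde T(z)$. On a hyperbolic disk $D(a,R)$ we rescale by the affine map $\zeta\mapsto a+(1-|a|)\zeta$ and divide by a local analytic nonvanishing "square root" of $B_a(\zeta)$. The kernel estimates for $\mathcal{D}$ weights give $|B_a(\zeta)|\asymp\|B_a\|\,\|B_\zeta\|$ near the diagonal, so the rescaled functions form a uniformly bounded family by (i). A normal-families argument then shows that if the rescaled diagonal values tend to $0$, the whole rescaled sesqui-analytic function tends to $0$ locally uniformly. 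This replaces the translation trick.

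Combining this with \eqref{WL1}--\eqref{WL3} and a dominated-convergence estimate of $\|\chi_{F_j}Tf\|$ through the reproducing formula restricted to $G_j$, the right-hand side in (ii) tends to $0$, so $\|T\|_e=0$.
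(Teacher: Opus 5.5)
Your overall plan is the paper's: Schur-test boundedness from \eqref{WL1}--\eqref{WL2}, the localization $TP_\omega\approx\sum_j M_{\chi_{F_j}}TP_\omega M_{\chi_{G_j}}$, an essential-norm bound by local pairings, and an affine rescaling with a Montel/identity-theorem argument in place of translations. Your two-variable sesqui-analytic version of the last step is equivalent to the one-variable device in Lemma \ref{equivalent}, and the necessity direction is fine. There is, however, a genuine gap in step (ii), exactly where the hypothesis ``norm closure of $\mathrm{WL}^p_\omega$'' enters.

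If $\|T-T'\|<\epsilon$ with $T'\in\mathrm{WL}^p_\omega$, the approximation error for $T$ differs from that for $T'$ by $(T-T')P_\omega-\sum_j M_{\chi_{F_j}}(T-T')P_\omega M_{\chi_{G_j}}$. Since the $F_j$ are disjoint, the bound available for the sum is
\[
\Big\|\sum_j M_{\chi_{F_j}}(T-T')P_\omega M_{\chi_{G_j}}\Big\|_{A^p_\omega\to L^p_\omega}\le N^{1/p}\|P_\omega\|_{L^p_\omega\to L^p_\omega}\,\epsilon ,
\]
where $N$ is the overlap number of $\{G_j\}$. Boundedness of $P_\omega$ does not remove the factor $N^{1/p}$.

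In your construction the lattice scale $r$ is fixed and $G_j$ is the $R$-neighbourhood of $F_j\subset D(a_j,r)$, with $R\to\infty$. A point $z$ then lies in every $G_j$ with $a_j\in D(z,R-r)$, so $N=N(R)$ grows exponentially in $R$, because hyperbolic area grows exponentially. But $R$ must be chosen after $T'$, since it is dictated by the decay in \eqref{WL3} for $T'$. So $N(R)^{1/p}\|P_\omega\|\epsilon$ is not controlled, and the argument is circular.

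The missing ingredient is the Mitkovski--Wick covering, Lemma \ref{cover}. For every scale $r$ it gives a partition $\{F_j\}$ with $\mathrm{diam}\,F_j\le 2r$ whose $r$-neighbourhoods $G_j$ overlap at most $N$ times, with $N$ \emph{independent of $r$}. A covering built from a Bergman lattice of fixed scale does not have this property. This uniformity is what makes the approximation argument in Proposition \ref{lp} work for the norm closure; with it, your steps (ii)--(iii) go through as in Theorem \ref{seg}.

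Three smaller corrections:

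\emph{Step (iii).} Do not divide by a local square root of $B^\omega_a$. The comparability $|B^\omega_a(\zeta)|\asymp\|B^\omega_a\|_{A^2_\omega}\|B^\omega_\zeta\|_{A^2_\omega}$ is only available on small hyperbolic disks. You need the conclusion on $D(z,r)$ for the large $r$ produced in step (ii), where $B^\omega_a$ may vanish. It suffices to normalize by the constant $\|B^\omega_{z}\|_{A^p_\omega}\|B^\omega_{z}\|_{A^{p'}_\omega}$, which by \eqref{kernel-2} and \eqref{Dhat1} is comparable to the same product at every point of $D(z,s)$. The rescaled family is then uniformly bounded by a multiple of $\|T\|$. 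A function holomorphic in $u$ and antiholomorphic in $v$ that vanishes on $\{u=v\}$ vanishes identically, which finishes the argument.

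\emph{Step (ii).} The local quantity should be $\sup_f\|\chi_{F_j}TP_\omega(\chi_{G_j}f)\|_{L^p_\omega}/\|\chi_{G_j}f\|_{L^p_\omega}$. With $Tf$ in place of $TP_\omega(\chi_{G_j}f)$ the ratio is in general unbounded.

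\emph{Step (i).} The restriction \eqref{delta} plays no role in the Schur test. Conditions \eqref{WL1}--\eqref{WL2} are exactly the Schur conditions for the test function $\|B^\omega_z\|_{A^2_\omega}^{\delta/(pp')}$. The restriction on $\delta$ is what makes Toeplitz operators weakly localized (Lemma \ref{kb}).
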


Although the proof of \cite[Theorem 1.5]{IMW} provides a useful guideline for establishing Theorem~\ref{main}, several essential differences arise in our setting. In \cite{MW}, Mitkovski and Wick studied the reproducing kernel thesis (RKT) for boundedness and compactness of various operators on Bergman-type spaces. This class, however, does not include all $\mathcal{D}$-weighted Bergman spaces~$A_\omega^2$, since the A.5 condition introduced in \cite{MW}
\begin{equation} \label{A5}
|\langle b_z^\omega, b_\zeta^\omega   \rangle_{A_\omega^2}|\asymp \frac{1}{\|B^\omega_{\phi_{z}(\zeta)}\|_{A_\omega^2}},\quad{z,\zeta\in\mathbb{D}},
\end{equation}
is not, in general, satisfied by arbitrary weights in $\mathcal{D}$. For example, the typical $\mathcal{D}$ weight $\omega(z)=(1-|z|)\big(\log \frac{e}{1-|z|}\big)^{t}$ with $t\in  (0,1)$ fails to satisfy \eqref{A5}. As was pointed out in \cite{IMW}, it is of interest to establish the corresponding result in the absence of the A.5 condition. Here, we achieve this through a refined analysis of weights in $\mathcal{D}$, relying in particular on a precise integral estimate for a modified reproducing kernel obtained in \cite{PRWW}. Another essential difficulty is that the ``translation" operators available in the classical setting are no longer directly applicable. This issue was also noted in \cite{IMW} as a natural circumstance of interest. For a bounded operator $T$ on $A_{\omega}^{p}$, a crucial step in the proof of Theorem \ref{main} is to show that the boundary vanishing of its Berezin transform $\widetilde{T}$ is equivalent to
$$
 \limsup_{|z|\rightarrow 1^-} \left( \sup_{\zeta\in D(z,r)} \left|\langle Tb_{p,z}^\omega,b_{p',\zeta}^\omega \rangle_{A_\omega^2}\right|\right)=0.$$
Different from the typical translation operators used in the classical Bergman setting, we instead employ an affine transform taking advantage of the fact that the hyperbolic geometry of Bergman balls is preserved.

Building on our result concerning weakly localized operators, the second aim of this paper is to investigate the compactness of Toeplitz operators with bounded symbols in the two-weight setting. Since every Toeplitz operator with bounded symbol is weakly localized, the one-weight compactness characterization via the boundary vanishing of the Berezin transform follows as an immediate application of Theorem \ref{main}. We then show that a corresponding two-weight case is indeed equivalent to the one-weight setting.

In \cite{SW}, Stockdale and Wagner showed that, for $p\in(1,\infty)$ and $u\in L^\infty$, if $\sigma$ belongs to
the intersection of $B_p$ and the reverse H\"{o}lder class, then the Toeplitz operator $T_u$ is compact on~$A_\sigma^p$ if and only if its Berezin transform $\widetilde{T_u}(z)\rightarrow 0$ as $|z|\rightarrow 1^-$. Recently, Chen \cite{Cj} further proved that this result is valid for arbitrary $\sigma\in B_{p}$. One can refer to \cite{DGHY,DGWW2,PRS,TV1,YZ,Zhu2} for more characterizations on the boundedness and compactness of Toeplitz operators on weighted Bergman spaces.
Now we give the compactness characterization in the two-weight setting.
\begin{theorem}\label{main2}
Let $\nu\in \mathcal{D}$ and $1<p<\infty$. For $f\in L^\infty$, and for every weight $\omega$ satisfying $\omega\nu^{-1}\in B_p(\nu)$, then the following statements are equivalent:
\begin{itemize}
\item [(i)] $T_f^\nu$ is compact on $A_\omega^p$;
\item[(ii)]$T_f^\nu$ is compact on $A_\nu^p$;
\item[(iii)]$
\lim\limits_{|z|\to 1^-}\widetilde{T}(z)=0 .
$
\end{itemize}
\end{theorem}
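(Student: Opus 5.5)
The plan is to prove (ii)$\Leftrightarrow$(iii) via Theorem~\ref{main}, and then (i)$\Leftrightarrow$(ii) by interpolation and a weighted duality argument. Here $\widetilde{T}$ means the Berezin transform of $T_f^\nu$ with respect to $\nu$.

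\emph{Step 1: (ii)$\Leftrightarrow$(iii).} We first check that $T_f^\nu\in \mathrm{WL}_\nu^p$ for $f\in L^\infty$. We have
$$|\langle T_f^\nu b_z^\nu,b_\zeta^\nu\rangle|\le \|f\|_\infty\int_{\mathbb D}|b_z^\nu||b_\zeta^\nu|\nu\,dA,$$
and conditions \eqref{WL1}--\eqref{WL3} for this positive kernel reduce to Forelli--Rudin type estimates. These are
$$\int_{\mathbb D}|B_z^\nu(\xi)|\,\|B_\xi^\nu\|^{-s}\,d\lambda_\nu(\xi)\lesssim \|B_z^\nu\|^{1-s}$$
for $s$ in a range dictated by $\gamma(\nu)$, together with a tail version. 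We prove them with the modified kernel estimate from \cite{PRWW}; the restriction \eqref{delta} on $\delta$ is exactly what makes the exponents admissible. The tail condition follows by splitting off $D(z,r)$ and using the extra decay from the slack in $\delta$. The adjoint case is symmetric, since $(T_f^\nu)^*=T_{\bar f}^\nu$. Theorem~\ref{main} then gives (ii)$\Leftrightarrow$(iii).

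\emph{Step 2: (i)$\Rightarrow$(ii) and (ii)$\Rightarrow$(i).} The key input is that $\omega\nu^{-1}\in B_p(\nu)$ makes the projection $P_\nu$ bounded on $L^p_\omega$ (a Bekoll\'e--Bonami type statement relative to $\nu$, which I would quote from the literature or prove via dyadic sparse domination with respect to the measure $\nu\,dA$). Hence $T_f^\nu=P_\nu M_f$ is bounded on $A^p_\omega$.

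Next we use the openness/self-improvement of $B_p(\nu)$. There is $\epsilon>0$ such that $(\omega\nu^{-1})^{1+\epsilon}\in B_p(\nu)$, and so $T_f^\nu$ is bounded on $A^p_{\nu(\omega/\nu)^{1+\epsilon}}$. Since $A^p_\omega$ lies between $A^p_\nu$ and $A^p_{\nu(\omega/\nu)^{1+\epsilon}}$ by Stein--Weiss interpolation with change of measure (parameter $\theta=1/(1+\epsilon)$), we apply a one-sided compactness interpolation theorem of Cwikel/Krasnosel'skii type. Compactness on $A^p_\nu$ (endpoint weight $1$) plus boundedness at the other endpoint then yields compactness on $A^p_\omega$. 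For this we need the interpolation of the weighted Bergman spaces themselves, which follows because $P_\nu$ is a common bounded retraction from the $L^p$ spaces.

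For the converse, note that $\nu\nu^{-1}=1$ and that $\omega\nu^{-1}\in B_p(\nu)$ implies $(\omega\nu^{-1})^{-1}$-type weights are also in the class: $\sigma=(\omega/\nu)^{1-p'}$ satisfies $\sigma\in B_{p'}(\nu)$. We use $\sigma$ and the same interpolation to pass from compactness on $A^p_\omega$ to compactness on $A^p_\nu$. Here $A^p_\nu$ is written as an interpolation space between $A^p_\omega$ and $A^p_{\nu(\omega/\nu)^{-\eta}}$ for small $\eta>0$, the latter weight ratio lying in $B_p(\nu)$ by openness. Alternatively, one can avoid this route: compactness on $A^p_\omega$ implies $\widetilde{T}\to0$ directly. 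This holds because $b^\nu_{p,z}$-type test functions normalized in $A^p_\omega$ tend weakly to zero, and pairing against the dual normalized kernels in $A^{p'}_{\omega'}$ (duality under the $A^2_\nu$ pairing) recovers $\widetilde T(z)$ up to constants, by the $B_p(\nu)$ condition on hyperbolic disks. I would take this second route as primary, since it only needs kernel norm estimates $\|B_z^\nu\|_{A^p_\omega}\|B_z^\nu\|_{A^{p'}_{\omega'}}\asymp \|B_z^\nu\|^2_{A^2_\nu}$. Those estimates follow from the $B_p(\nu)$ condition averaged on top halves of Carleson boxes together with local constancy of $|B_z^\nu|$ on $D(z,r)$.

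The main obstacle is the transfer from $A^p_\nu$ compactness to $A^p_\omega$ compactness, i.e.\ the openness of $B_p(\nu)$ for a nonstandard radial $\mathcal D$ measure and the interpolation of the Bergman spaces. First I would try to establish the reverse H\"older/openness property via a dyadic Carleson-box model adapted to $\nu$.
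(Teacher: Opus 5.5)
Your Step 1 and the boundedness of $T_f^\nu=P_\nu M_f$ on $A^p_\omega$ agree with the paper (Proposition~\ref{TinWL}, Theorem~\ref{main}, Theorem~\ref{main1}).

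\textbf{The gap is in (ii)$\Rightarrow$(i).} That step rests on the claim that $(\omega\nu^{-1})^{1+\epsilon}\in B_p(\nu)$ for some $\epsilon>0$. This is a reverse H\"older-type self-improvement, and it is false for general $B_p(\nu)$ weights. Condition \eqref{Bp} only tests Carleson squares, so it says nothing about how $\omega$ behaves inside a hyperbolic disk.

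For example, take $\nu\equiv1$ and $\omega(z)=|z-\tfrac12|^{2(p-1)}\big(\log\frac{e}{|z-1/2|}\big)^{2(p-1)}$ for $|z-\tfrac12|<\tfrac14$, with $\omega(z)=1$ otherwise.
\begin{itemize}
\item The dual weight $\omega^{-1/(p-1)}=|z-\tfrac12|^{-2}\big(\log\frac{e}{|z-1/2|}\big)^{-2}$ is integrable.
\item Every Carleson square meeting $\{|z-\tfrac12|<\tfrac14\}$ has $|I|>\tfrac14$. Hence $\omega\in B_p(1)$.
\item For every $\epsilon>0$, $(\omega^{1+\epsilon})^{-1/(p-1)}$ has a non-integrable singularity $|z-\tfrac12|^{-2(1+\epsilon)}$ (up to logarithms) at $\tfrac12$. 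So $\omega^{1+\epsilon}\notin B_p(1)$.
\end{itemize}

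Changing exponents does not rescue direct interpolation either. If $A^p_\omega=[A^{p_0}_\nu,A^{p_1}_{\omega_1}]_\theta$ as in Proposition~\ref{inter1}, then necessarily $\omega_1\nu^{-1}=(\omega\nu^{-1})^{s}$ with $s=(1-\theta p/p_0)^{-1}>1$. One computes $(p-1)s-(p_1-1)=\frac{\theta(1-1/p_0)}{1/p-\theta/p_0}>0$, so in this example the dual weight of $\omega_1\nu^{-1}$ is never locally integrable at $\tfrac12$.

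The dyadic reverse H\"older argument you plan therefore cannot work for $\omega$ itself. In the paper's stopping-time proof (Lemma~\ref{invarRH}), the step that bounds the weight at $z\in T_K(J)$ by its $\nu$-average over $T_K(J)$ is exactly where local constancy is used.

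\textbf{The missing idea is to change the weight without changing the space.} The paper replaces $\omega$ by $\sigma_{\omega,\nu,r}=\frac{\omega(D(\cdot,r))}{\nu(D(\cdot,r))}\,\nu$.
\begin{itemize}
\item By the $KT(\nu)$ property, $\omega\in B_\infty(\nu)\subset\mathcal{D}(\mathbb{D})$. Lemma~\ref{normeq} then gives $A^p_{\sigma_{\omega,\nu,r}}=A^p_\omega$ with equivalent norms, so compactness on the two spaces is the same.
\item Jensen's inequality and Fubini give $\sigma_{\omega,\nu,r}\nu^{-1}\in B_p(\nu)$.
\item This weight and its dual power are essentially constant on hyperbolic disks. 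So Lemma~\ref{invarRH} does apply and yields the reverse H\"older conditions required by Corollary~\ref{BpRH}, i.e.\ by Lemma~\ref{inter2} together with \cite[Corollary 7]{CK}.
\end{itemize}

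\textbf{Two smaller points.}

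First, your interpolation for (i)$\Rightarrow$(ii) is actually sound, but it does not use openness. For $0<\eta\le\min\{p-1,\frac{1}{p-1}\}$, Jensen alone gives $[(\omega\nu^{-1})^{-\eta}]_{B_p(\nu)}\le[\omega\nu^{-1}]_{B_p(\nu)}^{\eta}$. Moreover $\nu=\omega^{\frac{\eta}{1+\eta}}\big(\nu(\omega\nu^{-1})^{-\eta}\big)^{\frac{1}{1+\eta}}$, so $A^p_\nu$ is an interpolation space between $A^p_\omega$ and a space on which $T_f^\nu$ is bounded. This direction thus needs no regularization, which is simpler than the paper's route.

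Second, the kernel bound behind your preferred route, $\|B_z^\nu\|_{A^p_\omega}\|B_z^\nu\|_{A^{p'}_{\omega'}}\lesssim\|B_z^\nu\|^2_{A^2_\nu}$, is not justified by what you cite. Sum \eqref{kernel-} over nested Carleson squares $S_0\subset S_1\subset\cdots$ containing $z$ with $|I_k|=2^k(1-|z|)$. The bare $B_p(\nu)$ condition only gives $\omega(S_k)\lesssim\big(\nu(S_k)/\nu(S_0)\big)^p\omega(S_0)$. That is exactly borderline and leaves a possible factor $\log\frac{1}{1-|z|}$. Removing it again requires self-improvement of a regularized weight.
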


To establish Theorem~\ref{main2}, several further challenges that differ substantially from those in the classical approach arise. Our path to build up an adaptable compact extrapolation theorem in out setting is based on a complete characterization of the boundedness of the two-weight Bergman projection in terms of the class of $B_p(\nu)$ weights. This method enables us to obtain Theorem~\ref{main2} for a class of $B_p(\nu)$ weights satisfying certain reverse H\"older condition. However, the reverse H\"older condition on $\omega\nu^{-1}$ and $(\omega\nu^{-1})^{-\frac{p'}{p}}$ may fail for some $\nu\in\mathcal{D}$ and non-radial two-sided doubling weights $\omega$. To get rid of this restriction, we first find an appropriate auxiliary weight
$$\sigma_{\omega,\nu ,r}(z)=\frac{\omega\big(D(z,r)\big)}{\nu \big(D(z,r)\big)}\nu(z),\quad z\in\mathbb{D},$$
such that
$$
\|f\|_{A^p_{\sigma_{\omega,\nu ,r}}}\asymp\|f\|_{A^p_\omega}, \quad f\in H(\D),
$$
for each fixed $r=r(\omega,\nu)$ sufficiently large. This estimate transforms the $A^p_\omega$-norm of any analytic function into its $A^p_{\sigma_{\omega,\nu ,r}}$-norm, where the values of $\sigma_{\omega,\nu ,r}$ are comparable within each hyperbolic disk. This serves us as a useful tool here, and may also have applications in the future work.
Our further argument is based on a dyadic tree together with a stopping-time procedure, adapting the method used in \cite{APR2}. The dyadic structure provides a natural substitute for decomposing the weight $\sigma_{\omega,\nu ,r}$, which allows us to iterate along the tree.

Apart from establishing the compact extrapolation theorem, we are also led to provide a characterization of the boundedness of the two-weight Bergman projection, which guarantees that the Toeplitz operators with bounded symbols are bounded and in turn allows us to proceed with the study of the compactness criteria. 

Recall that the Bergman projection
$$
P_\omega(f)(z)=\int_{\mathbb{D}}f(\zeta)\overline{B_z^{\omega}(\zeta)}\omega(\zeta)dA(\zeta)
$$
is an orthogonal projection from $L^2_{\omega}$ onto $A^2_{\omega}$, and the maximal Bergman projection is
$$
P_\omega^+(f)(z)=\int_{\mathbb{D}}f(\zeta)|B_z^{\omega}(\zeta)|\omega(\zeta)dA(\zeta).
$$
Our approach to the two-weight Bergman projection relies on a weighted extension of the characterization of the classical $B_{p}$ weights.
For $1<p<\infty$ and a weight $\nu$, we say that~$\omega\in B_p(\nu)$ if $\omega\nu$ is integrable and
\begin{equation}\label{Bp}
[\omega]_{B_p(\nu)}=\sup_S \frac{\int_{S}\omega\nu dA}{\nu(S)} \bigg(\frac{\int_{S}\omega^{-\frac{p'}{p}}\nu dA}{\nu(S)} \bigg)^{p-1} <\infty,
\end{equation}
where the supremum is taken over all Carleson squares $S$ (see \cite{PRW}). We write $\omega(E)=\int_{E}\omega dA$ if $E$ is a Lebesgue measurable subset of $\mathbb{D}$ and $\omega$ is a weight on $\mathbb{D}$.

Now we present our result for the two-weight Bergman projection.
\begin{theorem}\label{main1}
Let $1<p<\infty$, $\nu\in \mathcal{D}$, and let $\omega\in L_{\omega,\mathrm{loc}}^1$ be positive. Then the following statements are equivalent:
\begin{itemize}
\item[(i)] $P_\nu^+:L^p_{\omega\nu}\to L^p_{\omega\nu}$ is bounded;
\item[(ii)] $P_\nu:L^p_{\omega\nu}\to L^p_{\omega\nu}$ is bounded;
\item[(iii)] $P_\nu:L^p_{\omega\nu}\to L^{p,\infty}_{\omega\nu}$ is bounded;
\item[(iv)] $\omega\in B_p(\nu)$.
\end{itemize}
Moreover,
$$
\|P_\nu^+\|_{L_{\omega\nu}^p\to L_{\omega\nu}^p}\lesssim [\omega]_{B_p(\nu)}^{\max \{1,\frac{1}{p-1}\}}.
$$
\end{theorem}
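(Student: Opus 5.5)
The plan is to close the cycle (i)$\Rightarrow$(ii)$\Rightarrow$(iii)$\Rightarrow$(iv)$\Rightarrow$(i). The implications (i)$\Rightarrow$(ii) and (ii)$\Rightarrow$(iii) are immediate, since $|P_\nu f|\le P_\nu^+|f|$ and the strong-type bound implies the weak-type bound. So the content lies in (iii)$\Rightarrow$(iv), the necessity of the $B_p(\nu)$ condition, and in (iv)$\Rightarrow$(i) together with the quantitative estimate.

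\textbf{Step (iii)$\Rightarrow$(iv).} Here we follow the classical Bekoll\'e--Bonami testing argument, adapted to $\nu\in\mathcal{D}$. Fix a Carleson square $S=S(I)$, and let $S'$ be a Carleson square of comparable size lying at hyperbolically bounded distance from $S$ but disjoint from it, for instance an adjacent top half.

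The key kernel fact is a lower bound: for $z\in S'$ and $\zeta\in S$ we have
\[
\operatorname{Re}\bigl(B_z^\nu(\zeta)\,\overline{c}\bigr)\gtrsim \frac{1}{\nu(S)}
\]
for a suitable unimodular constant $c$. This should follow from $\nu\in\mathcal{D}$ via the known estimates $\|B_z^\nu\|_{A^2_\nu}^2\asymp 1/(\widehat\nu(z)(1-|z|))$ and $\nu(S)\asymp \widehat\nu(a_S)(1-|a_S|)$. If a uniform lower bound fails, I would pass to sub-squares where the argument of $B^\nu_z(\zeta)$ is nearly constant.

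With this in hand, test on $f=\omega^{-p'/p}\chi_S$, truncated to keep things integrable. Then $|P_\nu f(z)|\gtrsim \nu(S)^{-1}\int_S\omega^{-p'/p}\nu\,dA$ on $S'$. The weak-type bound gives
\[
\omega\nu(S')\Bigl(\tfrac{1}{\nu(S)}\int_S\omega^{-p'/p}\nu\Bigr)^p\lesssim \int_S\omega^{-p'/p}\nu .
\]
Exchanging the roles of $S$ and $S'$ and combining the two resulting inequalities in the usual way controls the $B_p(\nu)$ characteristic on $S$. Squares that are not of the "adjacent" type are handled by covering, using the doubling property of $\nu$ on Carleson squares, which holds since $\nu\in\widehat{\mathcal D}$.

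\textbf{Step (iv)$\Rightarrow$(i) with the sharp constant.} The first task is dyadic domination of $P_\nu^+$. Using the pointwise estimate of $|B_z^\nu(\zeta)|$ by $\int_0^{|z\overline\zeta|}\frac{dt}{\widehat\nu(t)(1-t)^2}$ (the modified kernel estimate from \cite{PRWW}), together with the $\widecheck{\mathcal D}$ decay \eqref{Dcheck}, we bound
\[
|B_z^\nu(\zeta)|\lesssim \sum_{\beta}\sum_{I\in\mathcal{D}^\beta}\frac{\chi_{S(I)}(z)\chi_{S(I)}(\zeta)}{\nu(S(I))}
\]
over finitely many shifted dyadic grids $\mathcal D^\beta$. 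Here $\widehat{\mathcal D}$ ensures that the kernel is comparable to $\nu(S_{z\wedge\zeta})^{-1}$ on the smallest square containing both points. This reduces matters to the positive dyadic operators
\[
Q^\beta f=\sum_I \frac{\int_{S(I)} f\nu}{\nu(S(I))}\chi_{S(I)}.
\]

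Next, $L^p_{\omega\nu}$ boundedness of $Q^\beta$ with norm $[\omega]_{B_p(\nu)}^{\max\{1,1/(p-1)\}}$ follows the sparse / Moen-type argument with the measure $\nu\,dA$ in place of $dA$. Write $\sigma=\omega^{-p'/p}$, substitute $f=g\sigma$, and dualize against $h\omega$. Then apply the principal-squares stopping time: the Carleson squares $\{S(I)\}$ have the property that the top halves $T(I)$ are disjoint and $\nu(T(I))\gtrsim\nu(S(I))$, which again uses $\nu\in\widecheck{\mathcal D}$. Using the weighted dyadic maximal function bounds in $L^p(\sigma\nu)$ and $L^{p'}(\omega\nu)$, one obtains the bound $[\omega]_{B_p(\nu)}$ for $p\ge2$. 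Duality, with $[\sigma]_{B_{p'}(\nu)}=[\omega]^{1/(p-1)}_{B_p(\nu)}$ and $P_\nu^+$ self-adjoint on $L^2_\nu$, gives the bound for $p<2$.

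\textbf{Main obstacle.} I expect the main difficulty to be the kernel estimates in both directions for general $\nu\in\mathcal D$. The lower bound on $\operatorname{Re} B_z^\nu$ over separated squares is needed for necessity, and the dyadic upper bound is needed for sufficiency. This is where the failure of \eqref{A5} matters, and where the $\widecheck{\mathcal D}$ condition must be used to make the geometric sums over dyadic generations converge.
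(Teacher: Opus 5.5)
Your architecture matches the paper's. The easy implications are the same. Necessity is done by testing the weak-type bound on truncations of $\omega^{-p'/p}$ supported on one of two paired regions, multiplying the two resulting inequalities, and applying H\"older on the partner region. Sufficiency is done by dyadic domination of $P_\nu^+$ followed by a weighted bound for the operators $Q_\nu^\beta$. However, both kernel inputs, which you rightly call the crux, are misidentified, and as written neither step goes through.

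\textbf{Sufficiency: the upper kernel bound.} The bound $|B_z^\nu(\zeta)|\lesssim\int_0^{|z\overline{\zeta}|}\frac{dt}{\widehat{\nu}(t)(1-t)^2}$ is just $|B_z^\nu(\zeta)|\le B^\nu_{|z|}(|\zeta|)$. It depends only on $|z\zeta|$ and has no off-diagonal decay. For $z\to 1$ and $\zeta\to -1$ it blows up, while the dyadic sum stays bounded, because only squares of size $\asymp 1$ contain both points. The ``modified kernel estimate'' \eqref{kernel-3} of \cite{PRWW} is an integral estimate and does not help either. What is needed is the off-diagonal pointwise bound \eqref{kernel-} from \cite[Theorem 1]{PRWW}: $|B_z^\nu(\zeta)|\lesssim 1/\big(\nu_{2/|1-\overline{z}\zeta|}\,|1-\overline{z}\zeta|\big)$. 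By \eqref{Dhat2} and \eqref{Dhat1}, this is $\asymp 1/\nu(S(J))$ for an arc $J$ with $z,\zeta\in S(J)$ and $|J|\asymp|1-\overline{z}\zeta|$. One then passes to some $L\in\mathcal{D}^\beta$ with $J\subset L$ and $|L|\le 6|J|$, as in Lemma~\ref{PQ}. Only an upper bound holds here; the kernel is not comparable to $\nu(S_{z\wedge\zeta})^{-1}$.

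\textbf{Necessity: the lower kernel bound.} The asymptotics of $\|B^\nu_z\|_{A^2_\nu}$ carry no information about the phase of $B^\nu_z(\zeta)$ for $\zeta\neq z$, so they cannot yield $\operatorname{Re}(\overline{c}\,B_z^\nu(\zeta))\gtrsim 1/\nu(S)$.

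With $S'$ a comparable square adjacent to or directly above $S$, the claim is in fact false. Take the weight $(\alpha+1)(1-|z|^2)^\alpha$, whose kernel is $(1-\overline{z}\zeta)^{-(2+\alpha)}$. On $S'\times S$ the quantity $\arg(1-\overline{z}\zeta)$ fills an interval of fixed length. Multiplied by $2+\alpha$, this exceeds $\pi$ for large $\alpha$, so no unimodular $c$ works. The classical fix of separating the squares horizontally uses the explicit kernel and is unavailable for general $\nu\in\mathcal{D}$.

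For such $\nu$, the only phase information comes from positivity of the Taylor coefficients $1/(2\nu_{2m+1})$. This gives $\operatorname{Re}B_a^\nu(z)\gtrsim 1/\nu(S(a))$ for $z\in S(a_\delta)$, that is, for $z$ deeper than $a$ and within angle $\lesssim\delta(1-|a|)$ of the ray through $a$. This is \eqref{BP1}, from \cite[Lemma 7]{PRS}. So the partner region must be the narrow radial strip $T_K^J$ over the same arc $J$ at height $\asymp|J|/\delta$, with $c=1$. One also uses $\nu(T_K^J)\asymp\nu(S(a))$ for $a\in T_K^J$, which follows from \eqref{Ddef}. Your fallback of ``passing to sub-squares'' points in this direction. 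But it is the partner region that must be narrowed, not $S$, since the $B_p(\nu)$ averages run over all of $S$.

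\textbf{The dyadic operator bound.} Once these two inputs are corrected, your Moen-type argument for $Q_\nu^\beta$ is a legitimate alternative. It uses disjoint tops and the weighted dyadic maximal bounds to give $[\omega]_{B_p(\nu)}$ directly for $p\ge 2$, then duality for $p<2$. The paper instead proves $p=2$, extrapolates to $p>2$ as in \cite{PottRe}, and dualizes. Your route avoids the extrapolation step.
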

The study of the two-weight Bergman projection goes back to the foundational works~\cite{Be,BB}, while the sharp norm estimate was established later in \cite{PottRe}. 
Pel\'{a}ez, R\"{a}tty\"{a} and Wick \cite[Theorem 2]{PRW} subsequently extended these results to weights in~$\mathcal{D}$, for which the associated reproducing kernels $B_{z}^{\nu}$ admit certain integral representation. Following this line of research, we obtain the corresponding characterizations without relying on the kernel structure. A key ingredient in showing the boundedness of the maximal Bergman projection is an optimal off-diagonal pointwise upper estimate for the Bergman reproducing kernel, recently established in \cite[Theorem 1]{PRWW}. More precisely, for $\nu\in\widehat{\mathcal{D}}$,
\begin{equation}\label{kernel-}
\left|B^\nu_{a}(z)\right|
	\lesssim\frac{1}{\nu_{\frac2{|1-\overline{a}z|}}|1-\overline{a}z|},\quad a,z\in\mathbb{D}.
\end{equation}
The necessity of the $B_p(\nu)$ condition follows by combining a local lower pointwise estimate of the Bergman kernel, presented in \cite[Lemma 7]{PRS}, with an appropriately geometric construction.

This paper is organized as follows. In Section 2, we show that the Toeplitz algebra is a subalgebra of the norm closure of $\mathrm{WL}_\omega^p$. Then we prove Theorem \ref{main}, which characterizes the compact operators in $\mathrm{WL}_\omega^p$ in terms of the vanishing property. As an application, we study when Toeplitz operators with symbols in $\mathrm{BMO}_\omega^p$ belong to $\mathrm{WL}_\omega^p$. In Section 3, we first prove Theorem~\ref{main1}, which gives the boundedness of Bergman projection $P_\nu:L_\omega^p\to L_\omega^p$ and consequently guarantees the boundedness of Toeplitz operators with bounded symbols. Then we prove Theorem \ref{main2} based on the compactness argument.

In this paper, we denote by $a\lesssim b$ if there exists a positive constant $C=C(\cdot)$ such that $a\leq Cb$, here the constant $C(\cdot)$ depends on the parameters indicated in the parenthesis, varying under different circumstances. And we also denote by $a\asymp b$ if both $b\lesssim a$ and $b\gtrsim a$ hold.

\section{The operator class $\mathrm{WL}_\omega^p$}
\subsection{Weakly localization and Toeplitz algebra}

In this subsection, we first show that the identity operator is weakly localized, in preparation for later proving that Toeplitz algebra is contained in the norm closure of $\mathrm{WL}_\omega^p$.

To begin with, we recall two useful characterizations of weights in $\widehat{\mathcal{D}}$. For a radial weight~$\omega$, \cite[Lemma 2.1]{Pe} shows that $\omega\in \widehat{\mathcal{D}}$ if and only if there exist constants $C=C(\omega)\geq1$ and $\beta=\beta(\omega)>0$ such that
		\begin{equation}\label{Dhat1}
        \widehat{\omega}(r)\leq C\left(\frac{1-r}{1-t}\right)^\beta\widehat{\omega}(t),
		\quad{0\leq r\leq t<1},
        \end{equation}
if and only if
        \begin{equation}\label{Dhat2}
        \omega_x\asymp \widehat{\omega}\left(1-\frac{1}{x}\right),\quad 1\leq x<\infty.
        \end{equation}
We next recall several integral estimates for Bergman kernels that will play a crucial role later. Let $\omega\in \widehat{\mathcal{D}}$, $\nu$ be a radial weight, $0<p<\infty$ and $n \in \mathbb{N} \cup\{0\}$. If, in addition, $\nu\in\widehat{\mathcal{D}}$, then \cite[Theorem 1]{PR4}\label{Ne1} gives
		\begin{equation}\label{kernel-1}
\left\|(B_{z}^{\omega}\right)^{(n)}\|_{A_{\nu}^{p}}^{p}\asymp\int_{0}^{|z|}
\frac{\widehat{\nu}(t)}{\widehat{\omega}(t)^{p}(1-t)^{p(n+1)}}dt,
		\quad{|z| \rightarrow 1^{-}}.
        \end{equation}
In particular, for $1<p<\infty$,
		\begin{equation}\label{kernel-2}
      \left\|B_{z}^{\omega}\right \|_{A_{\omega}^{p}}^{p}\asymp
		\frac{1}{\widehat{\omega}(z)^{p-1}(1-|z|)^{p-1}}, \quad{z \in \mathbb{D}}.
        \end{equation}
A corresponding estimate for a modified Bergman kernel was recently established in \cite[Corollary 2]{PRWW} as a consequence of \eqref{kernel-}. To be precise, 
\begin{equation}\label{kernel-3}
\int_{\mathbb{D}}\left|(1-\bar{z} \zeta)^k (B_z^\omega)^{(n)}(\zeta)\right|^p \nu(\zeta) d A(\zeta)
\lesssim  |z|^{np}\left(\int_0^{|z|} \frac{\widehat{\nu}(t)}{\widehat{\omega}(t)^p (1-t)^{p(n+1-k)}} d t+1\right), ~z \in \mathbb{D},
\end{equation}
 for each fixed $k \in \mathbb{R}$.
An earlier result in the case $2\le p<\infty$, $k\in\mathbb{N}$ and $n=0$ was obtained in \cite[Lemma 5]{PR3}.

The following two lemmas show that the identity operator is weakly localized.
\begin{lemma}\label{kb}
Let $\omega\in \mathcal{D}$. Then there exists a constant $C=C(\omega)>0$ such that
$$
\sup_{z\in\mathbb{D}}\int_{\mathbb{D}}\left|\langle b_z^\omega, b_{\zeta}^\omega\rangle_{A_\omega^2}\right|\frac{\|B_z^\omega\|_{A_\omega^2}^{\kappa}}{\|B_\zeta^\omega\|_{A_\omega^2}^{\kappa}} d\lambda_\omega(\zeta)\leq C,
$$
for each fixed $\gamma(\omega)<\kappa<1.$
\end{lemma}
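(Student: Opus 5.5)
Writing $\langle b_z^\omega,b_\zeta^\omega\rangle_{A_\omega^2}=B_z^\omega(\zeta)/(\|B_z^\omega\|_{A_\omega^2}\|B_\zeta^\omega\|_{A_\omega^2})$, the integral in Lemma~\ref{kb} becomes
$$
I(z)=\|B_z^\omega\|_{A_\omega^2}^{\kappa-1}\int_{\mathbb{D}}|B_z^\omega(\zeta)|\,\|B_\zeta^\omega\|_{A_\omega^2}^{-1-\kappa}\,d\lambda_\omega(\zeta).
$$
By \eqref{kernel-2} with $p=2$ we have $\|B_\zeta^\omega\|_{A_\omega^2}^{2}\asymp (\widehat{\omega}(\zeta)(1-|\zeta|))^{-1}$, so
$$
\|B_\zeta^\omega\|_{A_\omega^2}^{-1-\kappa}\frac{\omega(\zeta)}{\widehat\omega(\zeta)(1-|\zeta|)}\asymp \omega(\zeta)\big(\widehat\omega(\zeta)(1-|\zeta|)\big)^{\frac{\kappa-1}{2}}=:\nu_\kappa(\zeta).
$$
Hence
$$
I(z)\asymp \big(\widehat\omega(z)(1-|z|)\big)^{\frac{1-\kappa}{2}}\,\|B_z^\omega\|_{A^1_{\nu_\kappa}}.
$$
The plan is to estimate $\|B_z^\omega\|_{A^1_{\nu_\kappa}}$ via \eqref{kernel-3} with $p=1$, $n=0$, $k=0$, or via \eqref{kernel-1} if $\nu_\kappa\in\widehat{\mathcal D}$. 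Since $\nu_\kappa$ is radial, \eqref{kernel-3} applies directly without verifying doubling. For $|z|\le 1/2$ everything is trivially bounded, since the kernels are uniformly bounded and $\nu_\kappa$ is integrable; this integrability follows from the computation below at $r=0$.

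The first key step is to compute $\widehat{\nu_\kappa}$. Put $s=(1-\kappa)/2\in(0,\alpha/(1+\alpha))$, where $\kappa>\gamma(\omega)$ means exactly $s<\alpha/(1+\alpha)$. Then
$$
\widehat{\nu_\kappa}(r)=\int_r^1\omega(t)\widehat\omega(t)^{-s}(1-t)^{-s}\,dt.
$$
I would bound this by splitting $[r,1)$ into dyadic pieces $[1-2^{-j}(1-r),1-2^{-j-1}(1-r))$. On each piece the $\omega$-mass is at most $\widehat\omega(1-2^{-j}(1-r))$, and $\widehat\omega$, $1-t$ are comparable to their values at the left endpoint, using $\widehat{\mathcal D}$ to pass to the right endpoint. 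This gives terms of the form $\widehat\omega(r_j)^{1-s}(2^{-j}(1-r))^{-s}$. By \eqref{Dcheck}, $\widehat\omega(r_j)\lesssim 2^{-j\alpha}\widehat\omega(r)$, so the terms decay like $2^{-j(\alpha(1-s)-s)}$. The series converges precisely because $s<\alpha/(1+\alpha)$, and it yields
$$
\widehat{\nu_\kappa}(r)\lesssim \widehat\omega(r)^{1-s}(1-r)^{-s}.
$$
This step is where the hypothesis $\kappa>\gamma(\omega)$ and the $\widecheck{\mathcal D}$ part of $\omega\in\mathcal D$ enter, and I expect it to be the main obstacle. One has to handle carefully that $\omega$ itself is not pointwise controlled, so the $\omega$-mass must be absorbed into differences of $\widehat\omega$.

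Next, insert this into \eqref{kernel-3}:
$$
\|B_z^\omega\|_{A^1_{\nu_\kappa}}\lesssim 1+\int_0^{|z|}\widehat\omega(t)^{-s}(1-t)^{-1-s}\,dt.
$$
By \eqref{Dhat1} in the form $\widehat\omega(t)\gtrsim((1-t)/(1-|z|))^{\beta}\widehat\omega(z)$ I would bound $\widehat\omega(t)^{-s}$ from above. It is cleaner, though, to note that $t\mapsto\widehat\omega(t)(1-t)$ is essentially decreasing, with $\widehat\omega(t)(1-t)\gtrsim((1-t)/(1-|z|))^{1}\widehat\omega(z)(1-|z|)$ since $\widehat\omega$ is decreasing. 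Then the integral is at most
$$
(\widehat\omega(z)(1-|z|))^{-s}(1-|z|)^{-s}\int_0^{|z|}(1-t)^{-1}\,dt,
$$
which is too lossy. So instead I would use the exact power $(1-t)^{-1-s}$: with $\widehat\omega(t)\ge\widehat\omega(z)$ for $t\le|z|$, the integral is at most
$$
\widehat\omega(z)^{-s}\int_0^{|z|}(1-t)^{-1-s}\,dt\lesssim \widehat\omega(z)^{-s}(1-|z|)^{-s}.
$$
Multiplying by $(\widehat\omega(z)(1-|z|))^{s}$ gives $I(z)\lesssim 1$. The term $+1$ is harmless, because $\widehat\omega(z)(1-|z|)\le\widehat\omega(0)$ is bounded. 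This proves the uniform bound with $C=C(\omega,\kappa)$.
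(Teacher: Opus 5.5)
Your proof is correct and follows the paper's argument. It uses the same reduction via \eqref{kernel-2} to the $A^1$-norm of $B_z^\omega$ against the weight $\omega(\widehat{\omega}(1-|\zeta|))^{(\kappa-1)/2}$, which is the paper's $\omega\widehat{\eta}/\widehat{\omega}$. It proves the same tail bound $\widehat{\nu_\kappa}(r)\lesssim\widehat{\omega}(r)^{\frac{1+\kappa}{2}}(1-r)^{\frac{\kappa-1}{2}}$, and finishes the same way using $\widehat{\omega}(t)\ge\widehat{\omega}(z)$ for $t\le|z|$.

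The differences are minor. The paper gets the tail bound by citing \cite[Lemma 3]{PPR} for the auxiliary weight $\eta\in\mathcal{D}$ and then applies \eqref{kernel-1}. You instead prove the tail bound directly by dyadic summation with \eqref{Dcheck}, which is where $\kappa>\gamma(\omega)$ enters, and you use only the upper estimate \eqref{kernel-3} with $k=n=0$.
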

\begin{proof}
Let $\eta(s)=\widehat{\omega}(s)^{\frac{1+\kappa}{2}}(1-|s|)^{\frac{1+\kappa}{2}-2}$ for all $0\leq s<1$.
For $\gamma(\omega)=\frac{1-\alpha(\omega)}{1+\alpha(\omega)}<\kappa<1$, \eqref{Dcheck} and \eqref{Dhat1} show that $\eta\in \mathcal{D}$ and $\widehat{\eta}(s)\asymp \widehat{\omega}(s)^{\frac{1+\kappa}{2}}(1-|s|)^{\frac{1+\kappa}{2}-1}$.
It follows from \cite[Lemma 3]{PPR} that $$
\int_r^1 \frac{\omega(t) \widehat{\eta}(t)}{\widehat{\omega}(t)} d t \lesssim \widehat{\eta}(r), \quad 0 \leqslant r<1.
$$
This together with \eqref{kernel-2} and \eqref{kernel-1} shows that
$$
\begin{aligned}
\int_{\mathbb{D}}\left|\langle b_z^\omega, b_{\zeta}^\omega\rangle_{A_\omega^2}\right|\frac{\|B_z^\omega\|_{A_\omega^2}^{\kappa}}{\|B_\zeta^\omega\|_{A_\omega^2}^{\kappa}} d\lambda_\omega(\zeta)
\asymp& \big(\widehat{\omega}(z)(1-|z|)\big)^{\frac{1-\kappa}{2}}\int_{\mathbb{D}} |B_z^\omega(\zeta)|\frac{\omega(\zeta)\widehat{\eta}(\zeta)}{\widehat{\omega}(\zeta)}dA(\zeta)\nonumber\\
\asymp &\big(\widehat{\omega}(z)(1-|z|)\big)^{\frac{1-\kappa}{2}}\int_{0}^{|z|} \left(\int_{r}^1\frac{\omega(s)\widehat{\eta}(s)}{\widehat{\omega}(s)}ds\right) \frac{1}{\widehat{\omega}(r)(1-r)} dr\nonumber\\
\lesssim &\big(\widehat{\omega}(z)(1-|z|)\big)^{\frac{1-\kappa}{2}}\int_{0}^{|z|} \frac{dr}{\widehat{\omega}(r)^{\frac{1-\kappa}{2}}(1-r)^{\frac{3-\kappa}{2}}}
\lesssim  1, \quad{z\in\mathbb{D}},
\end{aligned}
$$
which completes the proof.
\end{proof}
For a weight $\omega$ and $s\in\mathbb{R}$, we denote $\omega_{[s]}(z)=\omega(z)(1-|z|)^{s}$ for all $z\in\mathbb{D}$.
\begin{lemma}\label{kc}
Let $\omega\in \mathcal{D}$. Then
$$
\lim_{R\rightarrow \infty}\sup_{z\in\mathbb{D}}
\int_{\mathbb{D}\setminus D(z,R)}\left|\langle b_z^\omega, b_{\zeta}^\omega\rangle_{A_\omega^2}\right|\frac{\|B_z^\omega\|_{A_\omega^2}^{\kappa}}{\|B_\zeta^\omega\|_{A_\omega^2}^{\kappa}} d\lambda_\omega(\zeta)=0,
$$
for each fixed $\gamma(\omega)<\kappa<1$.
\end{lemma}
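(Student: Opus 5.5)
The idea is to use the margin between $\kappa$ and $\gamma(\omega)$. Fix $\kappa$ with $\gamma(\omega)<\kappa<1$ and choose $\kappa'$ with $\gamma(\omega)<\kappa'<\kappa$. Write
$$
\frac{\|B_z^\omega\|^{\kappa}_{A^2_\omega}}{\|B_\zeta^\omega\|^{\kappa}_{A^2_\omega}}
=\frac{\|B_z^\omega\|^{\kappa'}_{A^2_\omega}}{\|B_\zeta^\omega\|^{\kappa'}_{A^2_\omega}}\cdot
\left(\frac{\|B_z^\omega\|_{A^2_\omega}}{\|B_\zeta^\omega\|_{A^2_\omega}}\right)^{\kappa-\kappa'} .
$$
The integrand will then be split according to whether $|\zeta|$ is much closer to the boundary than $|z|$ or not. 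By \eqref{kernel-2}, $\|B_z^\omega\|^2_{A^2_\omega}\asymp (\widehat\omega(z)(1-|z|))^{-1}$. Combining \eqref{Dcheck} and \eqref{Dhat1}, the function $\widehat\omega(s)(1-s)$ is essentially decreasing with power-type control in $(1-s)$. Hence the extra factor $(\|B_z^\omega\|/\|B_\zeta^\omega\|)^{\kappa-\kappa'}$ is small, at most a constant times $((1-|\zeta|)/(1-|z|))^{c}$ for some $c>0$, when $1-|\zeta|\le \varepsilon(1-|z|)$. On that region Lemma~\ref{kb} applied with $\kappa'$ bounds the rest of the integral uniformly. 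The same argument applied with the exponents swapped, i.e.\ comparing $\kappa$ with some $\kappa''\in(\kappa,1)$, handles the region $1-|z|\le\varepsilon(1-|\zeta|)$, where the ratio is again small but in the opposite direction.

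What remains is the region where $1-|\zeta|$ and $1-|z|$ are comparable, up to the factor $\varepsilon$, while $\beta(z,\zeta)\ge R$. There the kernel-norm ratio is bounded above and below, so it suffices to show
$$
\sup_z\int_{\{\zeta\notin D(z,R):\ 1-|\zeta|\asymp_\varepsilon 1-|z|\}} |\langle b_z^\omega,b_\zeta^\omega\rangle|\,d\lambda_\omega(\zeta)\to0 .
$$
Since $\zeta\notin D(z,R)$ while $1-|\zeta|\asymp 1-|z|$, the angular separation must satisfy $|\arg\zeta-\arg z|\gtrsim e^{cR}(1-|z|)$, up to the constants depending on $\varepsilon$. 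On this set I would use the pointwise estimate \eqref{kernel-}, which gives $|B_z^\omega(\zeta)|\lesssim (\omega_{2/|1-\bar z\zeta|}|1-\bar z\zeta|)^{-1}\asymp(\widehat\omega(1-|1-\bar z\zeta|)|1-\bar z\zeta|)^{-1}$ by \eqref{Dhat2}. Multiplying by $\|B_z^\omega\|^{-1}\|B_\zeta^\omega\|^{-1}\asymp \widehat\omega(z)(1-|z|)$ and using \eqref{Dcheck} with exponent $\alpha(\omega)$, we get
$$
|\langle b_z^\omega,b_\zeta^\omega\rangle|\lesssim\left(\frac{1-|z|}{|1-\bar z\zeta|}\right)^{1-\alpha(\omega)} .
$$
For fixed $\varepsilon$ the measure $d\lambda_\omega$ restricted to the annulus $1-|\zeta|\asymp1-|z|$ has mass $\asymp 1$ per arc of length $(1-|z|)$, since $\int_I \omega/(\widehat\omega(1-s))\,ds\asymp1$ over such dyadic ranges. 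Summing over the dyadic arcs at distance $\approx 2^j(1-|z|)$ gives a geometric series $\sum_{2^j\gtrsim e^{cR}}2^{-j(1-\alpha)}$, which tends to $0$ as $R\to\infty$ uniformly in $z$, because $\alpha(\omega)<1$.

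The main obstacle is the first step: checking that, after extracting the small factor, the off-diagonal regions (where $|\zeta|$ and $|z|$ are not comparable) are controlled uniformly in $z$. My first approach would be exactly the interpolation above, with $\varepsilon=\varepsilon(R)\to0$ slowly. The point is that points with $\beta(z,\zeta)<R$ automatically satisfy $1-|\zeta|\asymp_R 1-|z|$, so for $\varepsilon$ small relative to $e^{-2R}$ the two off-diagonal regions lie entirely in $\mathbb D\setminus D(z,R)$, and their contributions are $O(\varepsilon^{c})$. If the two-sided interpolation turns out to be awkward, the fallback is to run the radial computation in the proof of Lemma~\ref{kb} directly, restricted to $|\zeta|$ outside $[\,|z|-\varepsilon^{-1}(1-|z|),\,1-\varepsilon(1-|z|)\,]$. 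The tails of the convergent integrals there are small by the same power estimates, namely \eqref{Dcheck} with $\kappa>\gamma(\omega)$.
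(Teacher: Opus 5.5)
Your off-diagonal step is correct, and it is a nice trick. Lemma~\ref{kb} holds on the whole open range $\gamma(\omega)<\kappa<1$. So trading $\kappa$ for $\kappa'\in(\gamma(\omega),\kappa)$ on $\{1-|\zeta|\le\varepsilon(1-|z|)\}$, and for $\kappa''\in(\kappa,1)$ on $\{1-|z|\le\varepsilon(1-|\zeta|)\}$, gives a contribution $\lesssim\varepsilon^{c}$ uniformly in $z$ and $R$.

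The gap is in the comparable-radii region. Your bound $|\langle b_z^\omega,b_\zeta^\omega\rangle_{A^2_\omega}|\lesssim\big(\frac{1-|z|}{|1-\bar z\zeta|}\big)^{1-\alpha(\omega)}$ is true but too weak, and the summation after it miscounts the measure. The part of the annulus $\{\varepsilon(1-|z|)<1-|\zeta|<\varepsilon^{-1}(1-|z|)\}$ at angular distance $\approx2^j(1-|z|)$ from $\arg z$ consists of $\approx2^j$ arcs of length $1-|z|$. Its $\lambda_\omega$-mass is therefore $\asymp_\varepsilon 2^j$, not $\asymp1$. With your exponent the series is really $\sum_j 2^{j}2^{-j(1-\alpha(\omega))}=\sum_j2^{j\alpha(\omega)}$, summed up to $2^j\approx(1-|z|)^{-1}$. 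That is of size $(1-|z|)^{-\alpha(\omega)}$, so the step gives neither uniform boundedness in $z$ nor decay in $R$. The geometric series you wrote, ``convergent because $\alpha(\omega)<1$'', comes from these two errors together.

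The repair is to apply \eqref{Dcheck} in the right direction. Since $|1-\bar z\zeta|\ge1-|z|$, the point $r=1-|1-\bar z\zeta|$ satisfies $r\le|z|$. Hence $\widehat\omega(z)\lesssim\big(\frac{1-|z|}{|1-\bar z\zeta|}\big)^{\alpha(\omega)}\widehat\omega(r)$. Combined with \eqref{kernel-}, \eqref{Dhat2} and \eqref{kernel-2}, this gives on the annulus
\[
|\langle b_z^\omega,b_\zeta^\omega\rangle_{A^2_\omega}|\lesssim_\varepsilon\Big(\frac{1-|z|}{|1-\bar z\zeta|}\Big)^{1+\alpha(\omega)} .
\]
Moreover, $\beta(z,\zeta)\ge R$ forces $|1-\bar z\zeta|^2\ge\cosh^2R\,(1-|z|^2)(1-|\zeta|^2)$, so $|1-\bar z\zeta|\gtrsim\varepsilon^{1/2}e^{R}(1-|z|)$ there. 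The correct series is then
\[
\sum_{2^j\gtrsim\varepsilon^{1/2}e^R}2^{j}\,2^{-j(1+\alpha(\omega))}\lesssim_\varepsilon e^{-\alpha(\omega)R}.
\]
Convergence thus comes from $\alpha(\omega)>0$, that is, from $\omega\in\widecheck{\mathcal{D}}$. With only the monotonicity of $\widehat\omega$ (exponent $1$) the sum diverges logarithmically.

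You also need to fix the order of quantifiers. The constants in this region blow up as $\varepsilon\to0$, and the gain starts only at $2^j\approx\varepsilon^{1/2}e^R$. Your suggestion to take $\varepsilon$ small relative to $e^{-2R}$ would therefore destroy the decay. Instead, given $\eta>0$, fix $\varepsilon$ so that the off-diagonal regions contribute less than $\eta$ for all $z$ and $R$, and only then let $R\to\infty$.

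With these corrections your route is a valid alternative to the paper's. The paper does not decompose at all. It applies H\"older's inequality with $p\in\big(\max\{1,\frac{2}{(1+\kappa)\alpha(\omega)}\},\frac{2}{1-\kappa}\big)$. The modified kernel $(1-\bar z\zeta)B_z^\omega$ is controlled in a weighted $L^p$ norm over all of $\mathbb{D}$ by \eqref{kernel-3}. The explicit factor $(1-|\zeta|^2)^{\frac{1+\kappa}{2}}/|1-\bar z\zeta|$, taken in $L^{p'}(\lambda_\omega)$ off the pseudo-hyperbolic disc, supplies the small factor $(1-R')^{c}$ via $1-|\varphi_z(\zeta)|^2\le1-R'^2$.

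Your version replaces \eqref{kernel-3} and that $L^{p'}$ computation by Lemma~\ref{kb} at neighbouring exponents, the pointwise bound \eqref{kernel-}, and a direct dyadic count. It is more elementary and shows exactly where $\widecheck{\mathcal{D}}$ enters. The paper's argument gives an explicit rate in one stroke.
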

\begin{proof}
Note that $\frac{2}{(1+\kappa)\alpha(\omega)}<\frac{2}{1-\kappa}$ since $\gamma(\omega)=\frac{1-\alpha(\omega)}{1+\alpha(\omega)}<\kappa$. Fix $p=p(\omega)$ such that
 $$\max\left\{1,\frac{2}{(1+\kappa)\alpha(\omega)}\right\}<p<\frac{2}{1-\kappa}.$$ Then \eqref{kernel-2} together with H\"{o}lder's inequality implies
\begin{eqnarray}\label{L1a}
&&\int_{\mathbb{D}\setminus D(z,R)}|\langle b_z^\omega, b_{\zeta}^\omega\rangle_{A_\omega^2}|\frac{\|B_z^\omega\|_{A_\omega^2}^{\kappa}}{\|B_\zeta^\omega\|_{A_\omega^2}^{\kappa}} d\lambda_\omega(\zeta)\nonumber\\
&\asymp&\big(\widehat{\omega}(z)(1-|z|)\big)^{\frac{1-\kappa}{2}}\int_{\mathbb{D}\setminus D(z,R)}\big|(1-\bar{z}\zeta)B_z^\omega(\zeta)\big|
\widehat{\omega}(\zeta)^{\frac{1+\kappa}{2}} \frac{\big(1-|\zeta|^2\big)^{\frac{1+\kappa}{2}}}{|1-\bar{z}\zeta|} d\lambda_\omega(\zeta)\nonumber\\
&\leq &\big(\widehat{\omega}(z)(1-|z|)\big)^{\frac{1-\kappa}{2}}\left(\int_{\mathbb{D}}\big|\big(1-\bar{z}\zeta\big)B_z^\omega(\zeta)\big|^p
\widehat{\omega}(\zeta)^{\frac{p(1+\kappa)}{2}}  d\lambda_\omega(\zeta)\right)^{1/p} \nonumber\\
&&\qquad \qquad \qquad \qquad \qquad
 \cdot\left(\int_{\mathbb{D}\setminus \Delta(z,R')}\frac{(1-|\zeta|^2)^{\frac{p'(1+\kappa)}{2}}}{|1-\bar{z}\zeta|^{p'}} d\lambda_\omega(\zeta)\right)^{1/p'},
\end{eqnarray}
where $R=\log\frac{1+R'}{1-R'}$ and $ \Delta(z,R')=\{\zeta\in\mathbb{D}:|\varphi_z(\zeta)|<R'\}$.
Now define $$\nu(s)=\frac{\omega(s)}{\widehat{\omega}(s)}(1-s)^{\frac{p'(1+\kappa)}{2}-1},\quad 0\leq s<1.$$ It follows from \cite[Lemma 3]{PPR} that $\nu\in\mathcal{D}$ and $\widehat{\nu}(s)\asymp (1-s)^{\frac{p'(1+\kappa)}{2}-1}$.
By \cite[Lemma 3]{PR3}, there exists a constant $\varrho=\varrho(\nu)>0$ such that $\nu_{[-\rho]}\in \mathcal{D}$ and  $\widehat{\nu_{[-\rho]}}\asymp \widehat{\nu}_{[-\rho]}$ for every $0<\rho<\varrho$.
Choose $c=c(\omega,p)$ such that $0<c<\min\big\{\varrho,\frac{p'(1-\kappa)}{2}\big\}$. Notice that for $\zeta\in \mathbb{D}\setminus \Delta(z,R')$, we have $\frac{(1-|z|^2)(1-|\zeta|^2)}{|1-\bar{z}\zeta|^2}\leq 1-R'^2$. Then, by \cite[Proposition 5]{PRS}, we have
\begin{eqnarray}\label{L1b}
\int_{\mathbb{D}\setminus \Delta(z,R')}\frac{(1-|\zeta|^2)^{\frac{p'(1+\kappa)}{2}}}{|1-\bar{z}\zeta|^{p'}} d\lambda_\omega(\zeta)
&\lesssim & \frac{(1-R')^c}{(1-|z|)^c}\int_{\mathbb{D}} \frac{\nu_{[-c]}(\zeta)}{|1-\bar{z}\zeta|^{p'-2c}}dA(\zeta)\nonumber\\
&\asymp &\frac{(1-R')^c}{(1-|z|)^c}\int_{\mathbb{D}} \frac{1}{|1-\bar{z}\zeta|^{p'-2c}} \frac{\widehat{\nu_{[-c]}}(\zeta)}{1-|\zeta|}dA(\zeta)\nonumber\\
&\asymp &\frac{(1-R')^c}{(1-|z|)^c}\int_{\mathbb{D}} \frac{(1-|\zeta|)^{\frac{p'(1+\kappa)}{2}-2-c}}{|1-\bar{z}\zeta|^{p'-2c}} dA(\zeta)\nonumber\\
&\asymp& \frac{(1-R')^c}{(1-|z|)^{\frac{p'(1-\kappa)}{2}}},\quad z\in \mathbb{D}.
\end{eqnarray}
Moreover, let $\tau(s)=\widehat{\omega}(s)^{\frac{p(1+\kappa)}{2}}(1-s)^{-2}$ for all $0 \leq s<1.$ \eqref{Dcheck} and \eqref{Dhat1} show that $\tau\in \mathcal{D}$ and $\widehat{\tau}(s)\asymp\widehat{\omega}(s)^{\frac{p(1+\kappa)}{2}}(1-|s|)^{-1}$. Then \eqref{kernel-3}, \cite[Lemma 3]{PPR} and \eqref{Dcheck} imply
\begin{eqnarray}\label{L1c}
\int_{\mathbb{D}}|\big(1-\bar{z}\zeta\big)B_z^\omega(\zeta)|^p
\widehat{\omega}(\zeta)^{\frac{p(1+\kappa)}{2}}  d\lambda_\omega(\zeta)
&\lesssim& \int_0^{|z|}\left(\int_{r}^{1} \frac{\omega(s)\widehat{\tau}(s)}{\widehat{\omega}(s)}ds\right)\frac{1}{\widehat{\omega}(r)^p} dr+1\nonumber\\
&\lesssim& \int_0^{|z|} \frac{dr}{\widehat{\omega}(r)^{\frac{p(1-\kappa)}{2}}(1-r)}  +1\nonumber\\
&\lesssim &\bigg(\frac{(1-|z|)^{\alpha(\omega)}}{\widehat{\omega}(z)}\bigg)^{\frac{p(1-\kappa)}{2}}\int_0^{|z|} \frac{dr}{(1-r)^{1+\frac{p(1-\kappa)\alpha(\omega)}{2}}}  +1\nonumber\\
&\lesssim &\frac{1}{\widehat{\omega}(z)^{\frac{p(1-\kappa)}{2}}}, \quad{z\in\mathbb{D}}.
\end{eqnarray}
Notice that $R'\rightarrow 1$ when $R\rightarrow \infty$.
Combining \eqref{L1a}, \eqref{L1b} with \eqref{L1c} and then letting $R\rightarrow \infty$, we complete the proof.
\end{proof}
Now we show that every Toeplitz operator with bounded symbol is weakly localized.
\begin{proposition}\label{TinWL}
Let $\omega\in\mathcal{D}$ and $1<p<\infty$. For $f\in L^\infty$, each Toeplitz operator $T_f^\omega\in \mathrm{WL}_\omega^p$.
\end{proposition}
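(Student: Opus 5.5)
The plan is to reduce each of the four conditions \eqref{WL1}, \eqref{WL2}, \eqref{WL3} and the analogous condition for $T^*$ to Lemmas \ref{kb} and \ref{kc}. The reduction rests on a pointwise bound of $|\langle T_f^\omega b_z^\omega, b_\zeta^\omega\rangle_{A_\omega^2}|$ by an integral of $|\langle b_z^\omega,b_u^\omega\rangle|\,|\langle b_u^\omega,b_\zeta^\omega\rangle|$, followed by Fubini.

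First, since $(T_f^\omega)^*=T_{\bar f}^\omega$ and $\|\bar f\|_\infty=\|f\|_\infty$, the conditions for $T^*$ follow from those for $T$, up to the exponent $1-\delta/p$ in place of $1-\delta/p'$. Both are handled by the same argument as long as the corresponding $\kappa$ lies in $(\gamma(\omega),1)$. I fix $\delta$ as in \eqref{delta}. Then $\kappa_1=1-\delta/p'$ and $\kappa_2=1-\delta/p$ both satisfy $\kappa_i>\gamma(\omega)$: for $1<p\le2$ one has $\delta/p<1-\gamma(\omega)$ and $\delta/p'\le\delta/p$, while for $p>2$ the roles swap. Clearly also $\kappa_i<1$.

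Next comes the key identity. We have $\langle T_f^\omega b_z^\omega,b_\zeta^\omega\rangle=\int_{\mathbb D} f(u)\,b_z^\omega(u)\overline{b_\zeta^\omega(u)}\,\omega(u)\,dA(u)$, and $b_z^\omega(u)=\langle b_z^\omega,B_u^\omega\rangle=\|B_u^\omega\|_{A^2_\omega}\langle b_z^\omega,b_u^\omega\rangle$. Using \eqref{kernel-2} with $p=2$, $\|B_u^\omega\|^2_{A^2_\omega}\omega(u)\asymp \frac{\omega(u)}{\widehat\omega(u)(1-|u|)}$, which is the density of $d\lambda_\omega$. Hence
$$|\langle T_f^\omega b_z^\omega,b_\zeta^\omega\rangle|\lesssim\|f\|_\infty\int_{\mathbb D}|\langle b_z^\omega,b_u^\omega\rangle||\langle b_u^\omega,b_\zeta^\omega\rangle|\,d\lambda_\omega(u).$$
Multiplying by $\|B_z^\omega\|^\kappa/\|B_\zeta^\omega\|^\kappa=(\|B_z^\omega\|^\kappa/\|B_u^\omega\|^\kappa)(\|B_u^\omega\|^\kappa/\|B_\zeta^\omega\|^\kappa)$ and integrating in $\zeta$ with Tonelli, the inner $\zeta$-integral is uniformly bounded by Lemma \ref{kb}, since $|\langle b_u,b_\zeta\rangle|$ is symmetric. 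The remaining $u$-integral is bounded by Lemma \ref{kb} again. This gives \eqref{WL1} and \eqref{WL2}.

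For the vanishing condition \eqref{WL3}, take $\zeta\notin D(z,r)$. For each $u$, either $u\notin D(z,r/2)$ or $u\in D(z,r/2)$; in the latter case $\zeta\notin D(u,r/2)$ by the triangle inequality for $\beta$. This splits the double integral into two pieces.

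In the first piece, $u\notin D(z,r/2)$ and $\zeta$ ranges over $\mathbb D$. The inner $\zeta$-integral is bounded by Lemma \ref{kb}, and the outer integral is bounded by $\sup_z\int_{\mathbb D\setminus D(z,r/2)}$, which tends to $0$ by Lemma \ref{kc}.

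In the second piece, $\zeta\notin D(u,r/2)$. The inner $\zeta$-integral over $\mathbb D\setminus D(u,r/2)$ is at most $\epsilon(r/2)=\sup_u\int_{\mathbb D\setminus D(u,r/2)}(\cdots)$, which is $o(1)$ by Lemma \ref{kc}, and the $u$-integral is then bounded by Lemma \ref{kb}.

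The main obstacle is essentially bookkeeping. One must ensure that the comparability constant in \eqref{kernel-2} is harmless, since it is uniform and appears only in the reproducing identity. One must also check the admissibility of $\kappa_1,\kappa_2$ from \eqref{delta}, which I verified above. Everything else follows from the two lemmas.
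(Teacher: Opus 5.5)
Your proposal is correct and follows the paper's proof essentially step for step. You use the same Fubini factorization through \eqref{kernel-2} and the same split of the $u$-integral according to whether $u\in D(z,r/2)$ (the paper's $I_1$, $I_2$), with Lemma \ref{kb} and Lemma \ref{kc} controlling the two pieces, and you handle the adjoint via $(T_f^\omega)^*=T_{\bar f}^\omega$. Your explicit check that $1-\delta/p'$ and $1-\delta/p$ both lie in $(\gamma(\omega),1)$, and your separate derivation of \eqref{WL1}--\eqref{WL2}, fill in details the paper leaves implicit.
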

\begin{proof}
Since $(T_f^\omega)^*=T_{\bar{f}}^\omega$, the corresponding conditions for $(T_f^\omega)^*$ can be established in the same way as those for $T_f^\omega$.
We now prove
\begin{equation}\label{TinWL1}
\sup_{z\in\mathbb{D}} \int_{\mathbb{D}\setminus D(z,r)}\left|\langle T_f^\omega b_z^\omega, b_{\zeta}^\omega\rangle_{A_\omega^2}\right|
\frac{\|B_z^\omega\|_{A_\omega^2}^{\kappa}}{\|B_\zeta^\omega\|_{A_\omega^2}^{\kappa}}d\lambda_\omega(\zeta)\rightarrow 0, \quad r\rightarrow\infty.
\end{equation}
Indeed, by Fubini's theorem, we have
$$
\langle T_f^\omega b_z^\omega, b_{\zeta}^\omega\rangle_{A_\omega^2}
= \int_{\mathbb{D}} \langle b_u^\omega,b_\zeta^\omega\rangle_{A_\omega^2} \langle b_z^\omega, b_u^\omega\rangle _{A_\omega^2} f(u)\|B_u^\omega\|_{A_\omega^2}^2\omega(u)dA(u),\quad z,\zeta\in\mathbb{D},\quad f\in L^\infty.
$$
It follows from \eqref{kernel-2} and Fubini's theorem that
$$
\begin{aligned}
&\int_{\mathbb{D}\setminus D(z,r)}\left|\langle T_f^\omega b_z^\omega, b_{\zeta}^\omega\rangle_{A_\omega^2}\right| \frac{\|B_z^\omega\|_{A_\omega^2}^{\kappa}}{\|B_\zeta^\omega\|_{A_\omega^2}^{\kappa}}d\lambda_\omega(\zeta)\\
\lesssim&\|f\|_{\infty} \int_{\mathbb{D}}\left(\int_{\mathbb{D}\setminus D(z,r)}   \left|\langle b_u^\omega,b_\zeta^\omega\rangle_{A_\omega^2}\right|\frac{\|B_z^\omega\|_{A_\omega^2}^{\kappa}}{\|B_\zeta^\omega\|_{A_\omega^2}^{\kappa}} d\lambda_\omega(\zeta)\right)
\left|\langle b_z^\omega, b_u^\omega\rangle _{A_\omega^2}\right|\frac{\omega(u)}{\widehat{\omega}(u)(1-|u|)}dA(u)\\
=&\|f\|_{\infty}\big(I_1(z)+I_2(z)\big),
\end{aligned}
$$
where
$$
I_1(z)=\int_{D(z,\frac{r}{2})}\left(\int_{\mathbb{D}\setminus D(z,r)}   \left|\langle b_u^\omega,b_\zeta^\omega\rangle_{A_\omega^2}\right|\frac{\|B_z^\omega\|_{A_\omega^2}^{\kappa}}{\|B_\zeta^\omega\|_{A_\omega^2}^{\kappa}}  d\lambda_\omega(\zeta)\right)
\left|\langle b_z^\omega, b_u^\omega\rangle _{A_\omega^2}\right|d\lambda_\omega(u),
$$ and
$$
I_2(z)=\int_{\mathbb{D}\setminus D(z,\frac{r}{2})}\left(\int_{\mathbb{D}\setminus D(z,r)}   |\langle b_u^\omega,b_\zeta^\omega\rangle_{A_\omega^2}|\frac{\|B_z^\omega\|_{A_\omega^2}^{\kappa}}{\|B_\zeta^\omega\|_{A_\omega^2}^{\kappa}}  d\lambda_\omega(\zeta)\right)
\left|\langle b_z^\omega, b_u^\omega\rangle _{A_\omega^2}\right|d\lambda_\omega(u).
$$
Lemma \ref{kb} shows that
$$
\begin{aligned}
I_1(z)
\leq& \int_{\mathbb{D}}\left(\int_{\mathbb{D}\setminus D(u, \frac{r}{2})}   |\langle b_u^\omega,b_\zeta^\omega\rangle_{A_\omega^2}|\frac{\|B_u^\omega\|_{A_\omega^2}^{\kappa}}{\|B_\zeta^\omega\|_{A_\omega^2}^{\kappa}}d\lambda_\omega(\zeta)\right)
\left|\langle b_z^\omega,b_u^\omega\rangle_{A_\omega^2}\right|
\frac{\|B_z^\omega\|_{A_\omega^2}^{\kappa}}{\|B_u^\omega\|_{A_\omega^2}^{\kappa}}d\lambda_\omega(u)\\
\lesssim & \sup_{u\in\mathbb{D}} \int_{\mathbb{D}\setminus D(u, \frac{r}{2})}   \left|\langle b_u^\omega,b_\zeta^\omega\rangle_{A_\omega^2}\right|\frac{\|B_u^\omega\|_{A_\omega^2}^{\kappa}}{\|B_\zeta^\omega\|_{A_\omega^2}^{\kappa}}  d\lambda_\omega(\zeta),\quad {z\in\mathbb{D}}.
\end{aligned}
$$
Lemma \ref{kb} also yields
$$
\begin{aligned}
I_2(z)
\leq & \int_{\mathbb{D}\setminus D(z,\frac{r}{2})}\left(\int_{\mathbb{D}}   \left|\langle b_u^\omega,b_\zeta^\omega\rangle_{A_\omega^2}\right|\frac{\|B_u^\omega\|_{A_\omega^2}^{\kappa}}{\|B_\zeta^\omega\|_{A_\omega^2}^{\kappa}}  d\lambda_\omega(\zeta)\right)
\left|\langle  b_z^\omega, b_u^\omega\rangle_{A_\omega^2}\right|\frac{\|B_z^\omega\|_{A_\omega^2}^{\kappa}}{\|B_u^\omega\|_{A_\omega^2}^{\kappa}}
d\lambda_\omega(u)\\
\lesssim& \sup_{z\in\mathbb{D}}\int_{\mathbb{D}\setminus D(z,\frac{r}{2})} \left|\langle  b_z^\omega, b_u^\omega\rangle _{A_\omega^2}\right|\frac{\|B_z^\omega\|_{A_\omega^2}^{\kappa}}{\|B_u^\omega\|_{A_\omega^2}^{\kappa}}d\lambda_\omega(u),\quad{z\in\mathbb{D}}.
\end{aligned}
$$
The estimates of the above two integrals, combined with Lemma \ref{kc}, show that \eqref{TinWL1} is satisfied, which completes the proof.
\end{proof}
\begin{lemma}\label{Bound}
Let $\omega\in\mathcal{D}$ and $1<p<\infty$. If $T\in\mathrm{WL}_\omega^p$, then $T$ can be extended as a bounded operator on $A_\omega^p$.
\end{lemma}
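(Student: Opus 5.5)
The plan is to represent $T$ as an integral operator through the reproducing formula and then apply Schur's test with power-of-kernel-norm test functions, mirroring the classical argument in \cite{IMW}.

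\textbf{Step 1: integral representation.} For $f$ in the span of reproducing kernels, the reproducing property gives
$$
Tf(\zeta)=\langle Tf, B_\zeta^\omega\rangle_{A_\omega^2}=\langle f, T^*B_\zeta^\omega\rangle_{A_\omega^2}
=\int_{\mathbb D} f(z)\,\overline{T^*B_\zeta^\omega(z)}\,\omega(z)\,dA(z).
$$
Moreover $T^*B_\zeta^\omega(z)=\langle T^*B^\omega_\zeta,B^\omega_z\rangle=\overline{\langle TB_z^\omega,B_\zeta^\omega\rangle}$. Using \eqref{kernel-2} with $p=2$, $\|B_z^\omega\|_{A^2_\omega}^2\asymp (\widehat\omega(z)(1-|z|))^{-1}$, so that $\omega(z)\,dA(z)\asymp \|B_z^\omega\|_{A_\omega^2}^{-2}\,d\lambda_\omega(z)\cdot \widehat\omega(z)(1-|z|)\|B_z\|^2$. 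More precisely, $\omega\,dA=\widehat\omega(z)(1-|z|)\,d\lambda_\omega\asymp \|B_z^\omega\|^{-2}d\lambda_\omega$. This yields
$$
Tf(\zeta)=\int_{\mathbb D} f(z)\,\langle Tb_z^\omega,b_\zeta^\omega\rangle\,\|B_z^\omega\|\,\|B_\zeta^\omega\|\,\omega(z)\,dA(z),
$$
so $|Tf(\zeta)|\lesssim \int |f(z)|\,K(z,\zeta)\,d\lambda_\omega(z)$, where $K(z,\zeta)=|\langle Tb_z^\omega,b_\zeta^\omega\rangle|\,\|B_\zeta^\omega\|/\|B_z^\omega\|$. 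We then bound this positive operator from $L^p(\omega\,dA)$ to itself. Since $\omega\,dA\asymp\|B\|^{-2}d\lambda_\omega$, it is cleaner to rewrite everything with respect to the measure $d\lambda_\omega$ and the function $g(z)=f(z)\|B_z^\omega\|^{-2/p}$, which gives $\|g\|_{L^p(\lambda_\omega)}\asymp\|f\|_{L^p_\omega}$. The kernel then becomes
$$
H(\zeta,z)=|\langle Tb_z^\omega,b_\zeta^\omega\rangle|\,\|B_\zeta^\omega\|^{1-2/p}\,\|B_z^\omega\|^{2/p-1}
=|\langle Tb_z^\omega,b_\zeta^\omega\rangle|\,\Big(\tfrac{\|B_z^\omega\|}{\|B_\zeta^\omega\|}\Big)^{1-2/p'},
$$
using $2/p-1=1-2/p'$.

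\textbf{Step 2: Schur's test.} Take the test function $h(z)=\|B_z^\omega\|^{s}$. We need
$$
\int H(\zeta,z)h(z)^{p'}d\lambda_\omega(z)\lesssim h(\zeta)^{p'}
\quad\text{and}\quad
\int H(\zeta,z)h(\zeta)^{p}d\lambda_\omega(\zeta)\lesssim h(z)^{p}.
$$
The second integral involves $|\langle Tb_z,b_\zeta\rangle|$ integrated in $\zeta$ with ratio $(\|B_z\|/\|B_\zeta\|)^{1-2/p'-sp}$, and this is exactly \eqref{WL1} provided $1-2/p'-sp=1-\delta/p'$, i.e.\ $s=(\delta-2)/(pp')$. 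The first integral uses $|\langle Tb_z,b_\zeta\rangle|=|\langle T^*b_\zeta,b_z\rangle|$ integrated in $z$ with ratio $(\|B_\zeta\|/\|B_z\|)^{-(1-2/p')-sp'}$. Substituting the value of $s$, the exponent equals $2/p'-1-(\delta-2)/p=1-\delta/p$ (using $1/p+1/p'=1$), which is exactly \eqref{WL2}. Both Schur conditions are therefore supplied by the hypotheses, so $T$ extends boundedly. Density of the span of kernels in $A_\omega^p$ (standard for $\omega\in\mathcal D$, for instance via boundedness of $P_\omega$ on $L^p_\omega$ and polynomial density) completes the extension.

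\textbf{Main obstacle.} The delicate point is Step 1: justifying the comparison $\omega\,dA\asymp\|B_z\|^{-2}d\lambda_\omega$, where the relation is exact up to the constants in \eqref{kernel-2}, and checking that the weighted change of functions is an isometry up to constants on $L^p$. Notably the exponent bookkeeping matches \eqref{WL1}--\eqref{WL2} with no extra restriction on $\delta$; the range \eqref{delta} is needed only later, for instance for membership of Toeplitz operators via Lemmas \ref{kb} and \ref{kc}.
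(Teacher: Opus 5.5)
Your proof is correct and is essentially the paper's argument. The paper applies Schur's test directly on $L^p_\omega$ to the kernel $|\langle T^*B_z^\omega,B_\zeta^\omega\rangle_{A_\omega^2}|$ with test function $h(z)=\|B_z^\omega\|_{A_\omega^2}^{\delta/(pp')}$, using \eqref{kernel-2} to reduce the two Schur conditions to \eqref{WL1}--\eqref{WL2}; your passage to $L^p(\lambda_\omega)$ via $f\mapsto f\|B^\omega\|^{-2/p}$ with exponent $s=(\delta-2)/(pp')$ is exactly that test function conjugated by this near-isometry, and your remark that the range \eqref{delta} of $\delta$ plays no role in this lemma is accurate.
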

\begin{proof}
Let $f$ be in the span of the reproducing kernels. By the reproducing formula, we have
$$
|Tf(z)|=|\langle Tf, B_{z}^{\omega}\rangle_{A_\omega^{2}}|
\leq\int_{\mathbb{D}}|f(\zeta)|\left|\langle T^{*}B_{z}^{\omega}, B_{\zeta}^{\omega}\rangle_{A_\omega^{2}}\right|\omega(\zeta)dA(\zeta),\quad z\in\mathbb{D}.$$ 
Let $\delta$ be the one in \eqref{delta}
and $h(z)=\|B_z^\omega\|_{A_\omega^2}^{\frac{\delta}{pp'}},$ where ${z\in\mathbb{D}}.$ Then \eqref{kernel-2} together with \eqref{WL2} shows that
$$
\int_{\mathbb{D}}\left|\langle T^*B_z^\omega,B_\zeta^\omega\rangle_{A_\omega^2}\right| h(\zeta)^{p'}\omega(\zeta)dA(\zeta)
\asymp \|B_z^\omega\|_{A_\omega^2}^{\frac{\delta}{p}}\int_{\mathbb{D}}\left|\langle T^{*}b_{z}^{\omega}, b_{\zeta}^{\omega}\rangle_{A_\omega^{2}}\right|
\frac{{\|B_{z}^{\omega}\|}^{1-\frac{\delta}{p}}_{A_\omega^2}}{{\|B_{\zeta}^{\omega}\|}^{1-\frac{\delta}{p}}_{A_\omega^2}}
d\lambda_\omega(\zeta)
\lesssim h(z)^{p'}.
$$
Similarly, \eqref{WL1} yields
$$
\int_{\mathbb{D}}\left|\langle T^*B_z^\omega,B_\zeta^\omega\rangle_{A_\omega^2}\right| h(z)^{p}\omega(z)dA(z) \lesssim h(\zeta)^p,\quad{\zeta\in\mathbb{D}}.
$$
Then Schur's test combined with the density argument shows that $T$ is bounded on $A_\omega^p$.
\end{proof}
\begin{proposition}\label{algebra}
Let $\omega\in\mathcal{D}$ and $1<p<\infty$. Then $\mathrm{WL}_\omega^p$ is an algebra. Furthermore, $\mathrm{WL}_\omega^2$ is a $*$-algebra.
\end{proposition}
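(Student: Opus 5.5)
We need to show that $\mathrm{WL}_\omega^p$ is closed under sums, scalar multiples and products, and that for $p=2$ it is closed under adjoints. The \emph{adjoint part} is immediate: for $p=2$ we have $p=p'=2$, so conditions \eqref{WL1}, \eqref{WL3} for $T$ are exactly \eqref{WL2} and its localized companion for $T^*$, and vice versa. \emph{Linearity} needs a common exponent. If $T_1,T_2\in\mathrm{WL}_\omega^p$ with exponents $\delta_1,\delta_2$, we use that all four conditions for a given $\delta$ imply them for every smaller admissible $\delta'$. This reduces the problem to showing monotonicity in $\delta$.

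To prove that monotonicity, we split the integral into the regions $\|B_\zeta^\omega\|\ge\|B_z^\omega\|$ and $\|B_\zeta^\omega\|<\|B_z^\omega\|$. On the first region lowering $\delta$ only decreases the weight. On the second the ratio is at most $1$ raised to a larger power, but then it is not smaller. So monotonicity is not automatic, and I expect to handle the exponents differently. Rather than comparing exponents, I would exploit that every admissible exponent $\kappa=1-\delta/p'$ lies in $(\gamma(\omega),1)$-type ranges. The approach is to \emph{factor} $T$ through kernels: write $\langle Tb_z,b_\zeta\rangle=\langle Tb_z, P b_\zeta\rangle$ and use the reproducing formula to insert the identity operator, whose kernel satisfies Lemmas \ref{kb}, \ref{kc} for every $\kappa\in(\gamma(\omega),1)$. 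Concretely,
$$\langle Tb_z,b_\zeta\rangle=\int_{\mathbb D}\langle Tb_z,b_u\rangle\langle b_u,b_\zeta\rangle\,d\lambda_\omega(u)$$
up to the equivalence \eqref{kernel-2}. This Schur-type composition lets one pass from exponent $\delta_1$ to any smaller $\delta$, provided the interpolating exponents stay in the range covered by Lemma \ref{kb}. That range is guaranteed by \eqref{delta}. In any case, taking $\delta=\min(\delta_1,\delta_2)$ is the intended choice.

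\textbf{Products.} For $T,S\in\mathrm{WL}_\omega^p$ with a common $\delta$, set $\kappa=1-\delta/p'$. Expanding by the reproducing formula gives
$$|\langle TSb_z,b_\zeta\rangle|\lesssim\int_{\mathbb D}|\langle Sb_z,b_u\rangle||\langle Tb_u,b_\zeta\rangle|\,d\lambda_\omega(u),$$
using $\|B_u^\omega\|^2\omega(u)\asymp d\lambda_\omega(u)\cdot\omega$-normalization from \eqref{kernel-2}. The weights factor as
$$\frac{\|B_z\|^\kappa}{\|B_\zeta\|^\kappa}=\frac{\|B_z\|^\kappa}{\|B_u\|^\kappa}\cdot\frac{\|B_u\|^\kappa}{\|B_\zeta\|^\kappa}.$$
Fubini then gives \eqref{WL1} for $TS$ as the product of the two suprema. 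For \eqref{WL3} we copy the proof of Proposition \ref{TinWL}. We split $u\in D(z,r/2)$, where $\zeta\notin D(z,r)$ forces $\zeta\notin D(u,r/2)$ and the localization of $T$ applies. On the complement $u\notin D(z,r/2)$ we use the localization of $S$, with the full integral of $T$ bounded. The conditions for $(TS)^*=S^*T^*$ follow symmetrically with $\kappa'=1-\delta/p$.

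\textbf{Main obstacle.} The delicate step is justifying the kernel expansion for operators only defined on the span of kernels. This is handled by Lemma \ref{Bound}: both operators extend boundedly on $A^p_\omega$, and hence $S b_z\in A_\omega^p$. We then apply the reproducing formula pairing $A^p_\omega$ with $A^{p'}_\omega$, writing $\langle TSb_z,b_\zeta\rangle=\langle Sb_z,T^*b_\zeta\rangle$ and expanding. The exponent-matching for sums, discussed above, is the other potential sticking point.
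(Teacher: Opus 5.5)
Your adjoint argument and your product argument are the paper's. For products this means: expand $\langle TSb_z^\omega,b_\zeta^\omega\rangle_{A_\omega^2}$ by the reproducing formula, telescope the weight through $u$, apply Fubini, and split $u\in D(z,r/2)$ versus its complement as in Proposition \ref{TinWL}. Using Lemma \ref{Bound} to justify the expansion fills in a detail the paper omits.

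The gap is the reduction to a common exponent $\delta$, on which both your sums and your products rest. Writing $T=TI$ and inserting the identity kernel cannot lower $\delta$. The weight $\|B_z^\omega\|^{\kappa}/\|B_\zeta^\omega\|^{\kappa}$ only telescopes as $(\|B_z^\omega\|/\|B_u^\omega\|)^{\kappa}(\|B_u^\omega\|/\|B_\zeta^\omega\|)^{\kappa}$, so the kernel of $T$ is tested against the same $\kappa$ you were trying to reach. Moreover, the monotonicity you want is false. Take $\omega\equiv1$, so that $\|B_z^\omega\|_{A_\omega^2}=(1-|z|^2)^{-1}$, and the rank-one operator $T=v\otimes 1$ (notation of Proposition \ref{comWL}). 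Then $\langle T^*b_z^\omega,b_\zeta^\omega\rangle_{A_\omega^2}=\overline{v(z)}(1-|z|^2)(1-|\zeta|^2)$, so \eqref{WL2} is equivalent to $\sup_{z}|v(z)|(1-|z|^2)^{\delta/p}<\infty$. For $v(z)=(1-z)^{-s}$ with $s>0$ small, this fails for every $\delta<ps$. On the other hand, all four defining conditions hold for every admissible $\delta>ps$. So an operator can be weakly localized with exponent $\delta_1$ and with no smaller one, and ``take $\delta=\min\{\delta_1,\delta_2\}$'' is not available.

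The paper attempts no such reduction. Its proof uses one $\delta$ for both $T$ and $S$ (``where $\delta$ is that of \eqref{delta}''), i.e.\ it treats the exponent as fixed. That is also the repair for your write-up: fix an admissible $\delta$ once and for all. Closure under sums and scalar multiples is then immediate from $|\langle (T_1+T_2)b_z^\omega,b_\zeta^\omega\rangle_{A_\omega^2}|\le|\langle T_1b_z^\omega,b_\zeta^\omega\rangle_{A_\omega^2}|+|\langle T_2b_z^\omega,b_\zeta^\omega\rangle_{A_\omega^2}|$, and your product argument goes through unchanged. Under the literal ``there exists $\delta$'' reading, neither your sketch nor the paper's proof establishes closure under sums, or under products of operators whose admissible exponents differ.
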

\begin{proof}
It follows immediately from the definition of weakly localized operators that $T\in\mathrm{WL_\omega^2}$ implies $T^* \in\mathrm{WL}_\omega^2$.
Next, we show that if $T,S \in \mathrm{WL}_\omega^p$, then $TS$ is also in $\mathrm{WL}_\omega^p$. It suffices to prove
\begin{equation}\label{alg1}
\lim_{r\rightarrow \infty} \sup_{z\in\mathbb{D}}\int_{\mathbb{D}\setminus D(z,r)}\left|{\langle TSb_{z}^{\omega},b_{\zeta}^{\omega}\rangle}_{A_\omega^{2}}\right|
\frac{\|B_{z}^{\omega}\|_{A_\omega^{2}}^{1-\frac{\delta}{p'}}}{\|B_{\zeta}^{\omega}\|_{A_\omega^2}^{1-\frac{\delta}{p'}}}
d\lambda_\omega(\zeta)=0,
\end{equation}
where $\delta$ is that of \eqref{delta}.
Indeed, \eqref{kernel-2} together with Fubini's theorem gives
$$
\begin{aligned}
&\int_{\mathbb{D}\setminus D(z,r)}\left|{\langle TSb_{z}^{\omega},b_{\zeta}^{\omega}\rangle}_{A_\omega^{2}}\right|
\frac{\|B_{z}^{\omega}\|_{A_\omega^{2}}^{1-\frac{\delta}{p'}}}{\|B_{\zeta}^{\omega}\|_{A_\omega^2}^{1-\frac{\delta}{p'}}}
d\lambda_\omega(\zeta)\\
\lesssim& \int_{\mathbb{D}\setminus D(z,r)}\left(\int_{\mathbb{D}}\left|\langle Sb_{z}^{\omega}, b_{u}^{\omega}\rangle_{A_\omega^{2}}\right|\left|{\langle T^{*}b_{\zeta}^{\omega}, b_{u}^{\omega}\rangle}_{A_\omega^{2}}\right|\frac{\omega(u)}{\widehat{\omega}(u)(1-|u|)}dA(u)\right)
\frac{\|B_{z}^{\omega}\|_{A_\omega^2}^{1-\frac{\delta}{p'}}}{\|B_{\zeta}^{\omega}\|_{A_\omega^2}^{1-\frac{\delta}{p'}}}
d\lambda_\omega(\zeta)\\
=&\int_{\mathbb{D}}\left(\int_{\mathbb{D}\setminus D(z,r)}\left|{\langle Tb_{u}^{\omega}, b_{\zeta}^{\omega}\rangle}_{A_\omega^{2}}\right|\frac{\|B_{z}^{\omega}\|_{A_\omega^2}^{1-\frac{\delta}{p'}}}
{\|B_{\zeta}^{\omega}\|_{A_\omega^2}^{1-\frac{\delta}{p'}}}d\lambda_\omega(\zeta)\right)
\left|\langle Sb_{z}^{\omega}, b_{u}^{\omega}\rangle_{A_\omega^{2}}\right|d\lambda_\omega(u),\quad z\in \mathbb{D}.
\end{aligned}
$$
For $z\in\mathbb{D}$, the above integral can be split into two parts
$$
J_1(z)=\int_{D(z,\frac{r}{2})}\left(\int_{\mathbb{D}\setminus D(z,r)}\left|{\langle Tb_{u}^{\omega}, b_{\zeta}^{\omega}\rangle}_{A_\omega^{2}}\right|\frac{\|B_{z}^{\omega}\|_{A_\omega^2}^{1-\frac{\delta}{p'}}}
{\|B_{\zeta}^{\omega}\|_{A_\omega^2}^{1-\frac{\delta}{p'}}}d\lambda_\omega(\zeta)\right)
\left|\langle Sb_{z}^{\omega}, b_{u}^{\omega}\rangle_{A_\omega^{2}}\right|d\lambda_\omega(u),
$$
and
$$
J_2(z)=\int_{\mathbb{D}\setminus D(z,\frac{r}{2})}\left(\int_{\mathbb{D}\setminus D(z,r)}\left|{\langle Tb_{u}^{\omega}, b_{\zeta}^{\omega}\rangle}_{A_\omega^{2}}\right|\frac{\|B_{z}^{\omega}\|_{A_\omega^2}^{1-\frac{\delta}{p'}}}{\|B_{\zeta}^{\omega}\|_{A_\omega^2}^{1-\frac{\delta}{p'}}}
d\lambda_\omega(\zeta)\right)
\left|\langle Sb_{z}^{\omega}, b_{u}^{\omega}\rangle_{A_\omega^{2}}\right|d\lambda_\omega(u).
$$
By an argument similar to that in the proof of Proposition \ref{TinWL}, \eqref{alg1} is satisfied, and hence we complete the proof.
\end{proof}
Let $\mathcal{T}_\omega^p$ denote the Toeplitz algebra on $A_\omega^{p}$ generated by Toeplitz operators induced by $L^{\infty}$ symbols. The following relationship between $\mathcal{T}_\omega^p$ and $\mathrm{WL}_\omega^p$ can be obtained by combining Propositions \ref{TinWL} and \ref{algebra}.
\begin{theorem}
Let $\omega\in \mathcal{D}$ and $1<p<\infty$. Then the Toeplitz algebra $\mathcal{T}_\omega^p$ is contained in the norm closure of $\mathrm{WL}_\omega^p$.
\end{theorem}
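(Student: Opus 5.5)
The proof is a closure argument: $\mathcal{T}_\omega^p$ is the norm closure in $\mathcal{B}(A_\omega^p)$ of the algebra $\mathcal{A}$ generated by the operators $T_f^\omega$ with $f\in L^\infty$. So it suffices to show two things: $\mathcal{A}\subseteq \mathrm{WL}_\omega^p$, and the norm closure of $\mathrm{WL}_\omega^p$ is closed under whatever operations produce $\mathcal{T}_\omega^p$ from $\mathcal{A}$.

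First, Proposition \ref{TinWL} gives $T_f^\omega\in\mathrm{WL}_\omega^p$ for every $f\in L^\infty$. Each such operator is bounded on $A_\omega^p$ by Lemma \ref{Bound}, so it is a legitimate element of $\mathcal{B}(A_\omega^p)$. Proposition \ref{algebra} says $\mathrm{WL}_\omega^p$ is an algebra, hence closed under sums, scalar multiples and finite products. Therefore every finite sum of finite products of Toeplitz operators with bounded symbols lies in $\mathrm{WL}_\omega^p$, that is, $\mathcal{A}\subseteq\mathrm{WL}_\omega^p$.

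Second, let $\overline{\mathrm{WL}_\omega^p}$ denote the closure of $\mathrm{WL}_\omega^p$ in the operator norm of $\mathcal{B}(A_\omega^p)$. This set makes sense because Lemma \ref{Bound} places $\mathrm{WL}_\omega^p$ inside $\mathcal{B}(A_\omega^p)$. The closure of a subalgebra of a Banach algebra is again a closed subalgebra, since multiplication is jointly norm continuous: if $T_n\to T$ and $S_n\to S$, then $T_nS_n\to TS$. So $\overline{\mathrm{WL}_\omega^p}$ is a closed algebra containing $\mathcal{A}$, and it therefore contains the closure of $\mathcal{A}$, which is $\mathcal{T}_\omega^p$. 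When $p=2$ the same reasoning works with $\ast$-algebras, because Proposition \ref{algebra} makes $\mathrm{WL}_\omega^2$ a $\ast$-algebra and the adjoint is an isometry.

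The only delicate point is the algebra property, namely the product estimate \eqref{alg1} in Proposition \ref{algebra}. That proof was only sketched as "similar to Proposition \ref{TinWL}", so here is how to check it. Split into $J_1$ and $J_2$.

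\emph{The term $J_1$.} For $u\in D(z,r/2)$ and $\zeta\notin D(z,r)$ we have $\zeta\notin D(u,r/2)$. Rewrite the weight as
\[
\frac{\|B_z^\omega\|^{1-\delta/p'}}{\|B_\zeta^\omega\|^{1-\delta/p'}}
=\frac{\|B_u^\omega\|^{1-\delta/p'}}{\|B_\zeta^\omega\|^{1-\delta/p'}}\cdot
\frac{\|B_z^\omega\|^{1-\delta/p'}}{\|B_u^\omega\|^{1-\delta/p'}}.
\]
Then $J_1(z)$ is bounded by the tail condition \eqref{WL3} for $T$ at radius $r/2$, multiplied by the uniform bound \eqref{WL1} for $S$. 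This product tends to $0$ as $r\to\infty$.

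\emph{The term $J_2$.} Enlarge the inner integral to all of $\mathbb{D}$ and use the same rewriting of the weight. Then $J_2(z)$ is bounded by \eqref{WL1} for $T$, multiplied by the tail condition \eqref{WL3} for $S$ at radius $r/2$, which again tends to $0$.

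The analogous estimate for $(TS)^*=S^*T^*$ follows in the same way from \eqref{WL2} and the fourth condition in the definition. Boundedness of the two suprema for $TS$ and $(TS)^*$ follows from the same computation with $r=0$. With these checks in place, the theorem follows from the two steps above.
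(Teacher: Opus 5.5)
Your proposal is correct and follows the paper's own argument. The paper proves the theorem simply by combining Proposition \ref{TinWL} with Proposition \ref{algebra}, and your $J_1$/$J_2$ check only fills in the product estimate that the paper sketches. One minor precision: that check uses the same $\delta$ for $T$ and $S$, which the definition of $\mathrm{WL}_\omega^p$ does not guarantee in general. It is harmless here because Lemmas \ref{kb} and \ref{kc} hold for every $\kappa\in(\gamma(\omega),1)$, so all the Toeplitz generators $T_f^\omega$ satisfy the defining conditions with one common admissible $\delta$.
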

\subsection{Weakly localization and compactness}

In this subsection we first show that the weakly localized operators can be approximated by infinite sums of
well-localized pieces in $\mathcal{D}$-weighted Bergman spaces. Next we characterize the compactness
of the operators belonging to the norm closure of $\mathrm{WL}_\omega^p$ by certain vanishing property, which is further shown to be equivalent to the boundary vanishing of the Berezin transform. Finally, we present several characterizations of compact operators in the class $\mathrm{WL}_\omega^p$.
To this end, we need the following covering lemma from \cite{MW}.
\begin{lemma}\label{cover}
There exists a positive integer $N$ such that, for any $r>0$, there is a covering $\mathcal{F}_r=\left\{F_j\right\}$ of $\mathbb{D}$ by disjoint Borel sets satisfying the following two conditions:
\begin{itemize}
\item[(i)] Every point of $\mathbb{D}$ belongs to at most $N$ of the sets $G_j:=\left\{z \in \mathbb{D}: \beta(z, \zeta) \leq r, \forall\zeta \in F_j\right\}$;
\item[(ii)] $\operatorname{diam}_d F_j \leq 2 r$ for every $j$.
\end{itemize}
\end{lemma}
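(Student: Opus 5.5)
This is a purely geometric statement about the hyperbolic (Bergman) metric $\beta$ on $\mathbb{D}$, so I will build the covering from a maximal separated set. Fix $r>0$ and pick a maximal $r$-separated subset $\{a_j\}\subset\mathbb{D}$, meaning $\beta(a_j,a_k)\ge r$ for $j\ne k$. Such a set exists by Zorn's lemma; it is countable because $\mathbb{D}$ is separable. By maximality, the balls $\{D(a_j,r)\}$ cover $\mathbb{D}$.

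I then make these balls disjoint by the standard Voronoi-type construction:
\[
F_1=D(a_1,r),\qquad F_j=D(a_j,r)\setminus\bigcup_{k<j}D(a_k,r).
\]
Discarding empty sets, the $F_j$ are disjoint Borel sets covering $\mathbb{D}$. Since $F_j\subset D(a_j,r)$, the triangle inequality gives $\operatorname{diam}_\beta F_j\le 2r$, which is (ii).

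For (i), fix $z\in\mathbb{D}$. If $z\in G_j$, then $\beta(z,\zeta)\le r$ for every $\zeta\in F_j$. Since $F_j\ne\emptyset$ and $F_j\subset D(a_j,r)$, it follows that $\beta(z,a_j)<2r$. The hyperbolic balls $D(a_j,r/2)$ are pairwise disjoint, and all those with $\beta(z,a_j)<2r$ lie inside $D(z,5r/2)$. Comparing measures with respect to the Möbius-invariant measure $d\lambda=dA/(1-|z|^2)^2$, the number of such $j$ is at most
\[
\frac{\lambda\big(D(0,5r/2)\big)}{\lambda\big(D(0,r/2)\big)}.
\]
This bound depends on $r$, and this is the main obstacle, because the lemma asks for $N$ independent of $r$.

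The way around it is to exploit the weaker requirement in (i), which is only a condition on the sets $G_j$ and not on the balls. The point is to choose the covering so that each $G_j$ is small relative to $F_j$ for large $r$. Concretely, I would pass to a dyadic/tree decomposition of $\mathbb{D}$ adapted to scale $r$, in the spirit of the construction in \cite{MW}. At each level the sets $F_j$ are chosen as top-halves of Carleson boxes (hyperbolic "annular sectors") of hyperbolic diameter at most $2r$. The corresponding $G_j$, the set of points within distance $r$ of all of $F_j$, is then nonempty only near the "center" of $F_j$. A point $z$ can lie in $G_j$ only for those $F_j$ whose hyperbolic diameter is within a fixed bounded window around $2r$ and which contain $z$ in their central region. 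Exponential volume growth of hyperbolic space makes the number of such sets bounded uniformly in $r$: after a Möbius map sending $z$ to $0$, one checks that a set of diameter at most $2r$, all of whose points are within $r$ of $0$, is confined to $D(0,r)$. Such sets are essentially centered pieces, and in a tree decomposition only boundedly many pieces, at most the branching number of the tree, can be centered at a given point.

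In the write-up I would first present the $r$-dependent separated-set argument, which already suffices for most applications. I would then verify the uniform bound by this tree construction, citing \cite{MW} for the details of the tree.
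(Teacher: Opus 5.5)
The paper gives no proof of this lemma; it imports it from \cite{MW}. So your argument has to stand on its own, and it stops exactly where the content of (i) lies. The separated-set construction is correct and gives (ii). For (i), however, it only yields $N(r)\le\sinh^2(5r/2)/\sinh^2(r/2)$ (with $d\lambda=(1-|z|^2)^{-2}dA$ one has $\lambda(D(0,\rho))=\sinh^2\rho$), and this grows exponentially in $r$, as you say. Your second half does not remove this dependence: it asserts the uniform bound rather than proving it, and the mechanism it names is wrong. It is not true that $z$ can lie in $G_j$ only when $\operatorname{diam}_\beta F_j$ is close to $2r$. For any $\zeta_0\in F_j$ one has $G_j\supset\overline{D}(\zeta_0,r-\operatorname{diam}_\beta F_j)$, so small pieces have large $G_j$, and many small pieces inside one $r$-ball is exactly what can make your count large. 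Nor is ``at most the branching number of the tree'' a bound independent of $r$: in a tree whose vertices are pieces of diameter about $2r$, each vertex has on the order of $e^{2r}$ children. Pointing to \cite{MW} for ``the details of the tree'' leaves (i) unproved.

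The missing idea is a packing argument with \emph{fat} pieces. With $G_j$ as defined, $z\in G_j$ if and only if $F_j\subset\overline{D}(z,r)=\{w:\beta(w,z)\le r\}$; this is all your remark after the M\"obius map says. So (i) asks how many of the disjoint sets $F_j$ fit inside one closed $r$-ball. If $\lambda(F_j)\ge c\,\lambda(D(0,r))$ for all $j$, with $c$ independent of $r$, then M\"obius invariance of $\lambda$ bounds this number by $1/c$. Nothing gives such a lower bound for your greedy cells $D(a_j,r)\setminus\bigcup_{k<j}D(a_k,r)$. For $r\ge\frac{1}{4}$, pass to the upper half-plane. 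It is isometric to $(\mathbb{D},\beta)$ via the Cayley map, with $\cosh 2\beta(p,q)=1+\frac{|p-q|^2}{2\,\mathrm{Im}\,p\,\mathrm{Im}\,q}$, and $d\lambda$ corresponds to $\frac{dx\,dy}{4\pi y^2}$. Take the Whitney-type boxes $Q_{m,k}=[k\theta H_m,(k+1)\theta H_m)\times[H_me^{-2r},H_m)$ with $H_m=e^{2rm}$, $m,k\in\mathbb{Z}$, $\theta=\frac{1}{2}$. The distance formula shows that the largest distance in $Q_{m,k}$ is between the two lower corners, so $\operatorname{diam}_\beta Q_{m,k}\le 2r$ because $\theta\le 1-e^{-4r}$. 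Also $\lambda(Q_{m,k})=\frac{\theta(e^{2r}-1)}{4\pi}\ge\frac{\theta}{\pi}\sinh^2 r=\frac{\theta}{\pi}\lambda(D(0,r))$, hence $N\le 2\pi$. For $r<\frac{1}{4}$ your own volume bound is already uniform. One caveat: this trick depends on the literal definition of $G_j$. The proof of Proposition~\ref{lp} uses $D(z,r)\subset G_{j_0}$ for $z\in F_{j_0}$, which holds for the neighbourhood sets $\{z:\beta(z,F_j)\le r\}$ of \cite{MW} but not for the sets as written. For the neighbourhood sets, your separated-set construction already gives (i) with $N=N(r)$. That is all the later argument uses, since the constants in Theorem~\ref{seg} may depend on $r$.
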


\begin{proposition}\label{lp}
Let $\omega\in\mathcal{D}$ and $1<p<\infty$. If $T$ is in the norm closure of $\mathrm{WL}_\omega^p$, then for every $\varepsilon>0$, there exists $r=r(T,\omega,p)>0$ sufficiently large such that for the covering $\mathcal{F}_r=\left\{F_j\right\}$ we have
$$\label{appro}
\|TP_\omega-\sum_{j}{M_{\chi_{F_{j}}}}TP_\omega{M_{\chi_{G_{j}}}}\|_{A_\omega^p\rightarrow L_\omega^p} <\varepsilon,
$$
where $M_u$ is the multiplication operator with the symbol $u$.
\end{proposition}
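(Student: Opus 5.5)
First reduce to $T\in\mathrm{WL}_\omega^p$. If $\|T-S\|_{A^p_\omega\to A^p_\omega}<\eta$ with $S\in\mathrm{WL}^p_\omega$, then the error operator satisfies
$$\Big\|(T-S)P_\omega-\sum_j M_{\chi_{F_j}}(T-S)P_\omega M_{\chi_{G_j}}\Big\|_{A^p_\omega\to L^p_\omega}\lesssim \eta .$$
The first term is clear. For the sum, the $F_j$ are disjoint, and the bounded overlap property (i) of Lemma \ref{cover} gives
$$\sum_j\|M_{\chi_{G_j}}f\|^p_{L^p_\omega}\le N\|f\|^p_{L^p_\omega}.$$
This bounds the sum by $N^{1/p}\eta$. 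So it is enough to prove the estimate for $T\in\mathrm{WL}_\omega^p$ with $r$ depending on $T$.

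For $f\in A^p_\omega$ (we may take $f$ in the span of kernels by density and Lemma \ref{Bound}) and $z\in F_j$, the reproducing formula gives
$$\Big(TP_\omega f-\sum_j M_{\chi_{F_j}}TP_\omega M_{\chi_{G_j}}f\Big)(z)=\int_{\mathbb D\setminus G_j} f(\zeta)\,\langle T B^\omega_\zeta,B^\omega_z\rangle_{A^2_\omega}\,\omega(\zeta)\,dA(\zeta).$$
By definition of $G_j$, if $z\in F_j$ and $\zeta\notin G_j$, then $\beta(z,\zeta)>r$. Hence the error is dominated pointwise by the integral operator with kernel
$$K_r(z,\zeta)=\chi_{\{\beta(z,\zeta)>r\}}\,|\langle T B^\omega_\zeta,B^\omega_z\rangle_{A^2_\omega}|$$
with respect to $\omega\,dA$. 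It therefore suffices to show that this positive operator has $L^p_\omega\to L^p_\omega$ norm tending to $0$ as $r\to\infty$.

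To do this, I repeat the Schur test of Lemma \ref{Bound} with $h(z)=\|B_z^\omega\|_{A^2_\omega}^{\delta/(pp')}$, where $\delta$ is as in \eqref{delta}. Using \eqref{kernel-2}, which gives $\|B_\zeta^\omega\|^2_{A^2_\omega}\omega\,dA\asymp d\lambda_\omega$, the two Schur integrals become exactly the truncated quantities of the definition. The first is
$$\int_{\mathbb D\setminus D(z,r)}|\langle T^*b_z^\omega,b_\zeta^\omega\rangle_{A^2_\omega}|\,\frac{\|B_z^\omega\|^{1-\delta/p}_{A^2_\omega}}{\|B_\zeta^\omega\|^{1-\delta/p}_{A^2_\omega}}\,d\lambda_\omega(\zeta).$$
The second is the analogous integral with $T$ and the exponent $1-\delta/p'$. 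These are \eqref{WL3} and its adjoint companion. Thus the Schur constants are
$$C_1(r)=\sup_z(\cdots),\qquad C_2(r)=\sup_\zeta(\cdots),$$
both tending to $0$ as $r\to\infty$, so the operator norm is at most $C_1(r)^{1/p'}C_2(r)^{1/p}\to0$. Choosing $r$ large then gives the desired bound $<\varepsilon$.

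The main obstacle is bookkeeping rather than depth. We must check that the Schur weights produce precisely the exponents $1-\delta/p$ and $1-\delta/p'$ of \eqref{WL1}--\eqref{WL3}, with the roles of $T$ versus $T^*$ and of $z$ versus $\zeta$ matched correctly, since $\langle TB_\zeta,B_z\rangle=\overline{\langle T^*B_z,B_\zeta\rangle}$. We must also confirm that the truncation $\beta(z,\zeta)>r$ is symmetric, so it applies to both Schur integrals. A secondary point is the density step: one passes from kernel spans to all of $A^p_\omega$ using boundedness of $T$ (Lemma \ref{Bound}) and of the localized sum, the latter again by the bounded overlap.
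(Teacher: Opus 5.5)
Your proposal is correct and follows the paper's proof essentially step for step. The error is dominated pointwise by the truncated kernel $\chi_{\{\beta(z,\zeta)>r\}}|\langle T^*B_z^\omega,B_\zeta^\omega\rangle_{A^2_\omega}|$. Schur's test with $h(z)=\|B_z^\omega\|_{A^2_\omega}^{\delta/(pp')}$ then turns the two Schur constants into exactly the truncated quantities of \eqref{WL3} and its adjoint companion, both of which tend to $0$ as $r\to\infty$.

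Your explicit reduction from the norm closure to $\mathrm{WL}_\omega^p$, which the paper only calls a standard approximation argument, is fine with two small points. First, the bound on the localized sum should carry the factor $\|P_\omega\|_{L^p_\omega\to L^p_\omega}$, which is finite since $\omega\in\mathcal{D}$. Second, the argument relies on the overlap constant $N$ in Lemma \ref{cover} being independent of $r$.
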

\begin{proof}
Assume $T\in \mathrm{WL}_\omega^p$. Then for every $\varepsilon>0$, there exists $r=r(T,\omega,p)>0$ large enough such that $$
\sup_{z\in\mathbb{D}}\int_{\mathbb{D}\setminus D(z,r)}\left|\langle Tb_{z}^{\omega}, b_{\zeta}^{\omega}\rangle_{A_\omega^{2}}\right|
\frac{{\|B_{z}^{\omega}\|}^{1-\frac{\delta}{p'}}_{A_\omega^2}}{{\|B_{\zeta}^{\omega}\|}^{1-\frac{\delta}{p'}}_{A_\omega^2}}
d\lambda_\omega(\zeta)<\varepsilon
$$
and
$$
\sup_{z\in\mathbb{D}}\int_{\mathbb{D}\setminus D(z,r)}\left|\langle T^*b_{z}^{\omega}, b_{\zeta}^{\omega}\rangle_{A_\omega^{2}}\right|
\frac{{\|B_{z}^{\omega}\|}^{1-\frac{\delta}{p}}_{A_\omega^2}}{{\|B_{\zeta}^{\omega}\|}^{1-\frac{\delta}{p}}_{A_\omega^2}}
d\lambda_\omega(\zeta)<\varepsilon,
$$
where $\delta$ is the one in \eqref{delta}.
By Lemma \ref{cover}, for any given $r$, there exists a disjoint covering $\mathcal{F}_r = \{F_j\}$ of $\mathbb{D}$ such that the associated collection $\{G_j\}$ has finite multiplicity.
Now let $$S_\omega f(z)= TP_\omega f(z)-\sum_{j}({M_{\chi_{F_{j}}}}TP_\omega{M_{\chi_{G_{j}}}}f)(z),\quad f\in A_{\omega}^{p}, \quad z\in\mathbb{D}.$$
For $f\in A_{\omega}^{p}$, Fubini's theorem implies
\begin{eqnarray}\label{FR}
({M_{\chi_{F_{j}}}}TP_\omega{M_{\chi_{G_{j}}}}f)(z)
&=& {\chi_{F_{j}}}(z) \langle P_\omega({M_{\chi_{G_{j}}}}f), T^*B_z^\omega\rangle_{A_\omega^2}\nonumber\\
&=& {\chi_{F_{j}}}(z)\int_{G_{j}}f(\zeta)\overline{\langle T^{*}B_{z}^{\omega}, B_{\zeta}^{\omega}\rangle_{A_\omega^{2}}}\omega(\zeta)dA(\zeta),\quad z\in\mathbb{D}.
\end{eqnarray}
For any $z\in\mathbb{D}$, there exists $j_0$ such that $z\in F_{j_{0}}$. Moreover, Lemma \ref{cover}(i) shows that $D(z,r)\subset G_{j_0}$ whenever $z\in F_{j_0}$. Then it follows that
$$
\begin{aligned}
|S_\omega f(z)|
=&
\bigg|\int_{\mathbb{D}\setminus G_{j_{0}}}f(\zeta)\overline{\langle T^{*}B_{z}^{\omega}, B_{\zeta}^{\omega}\rangle_{A_\omega^{2}}}\omega(\zeta)dA(\zeta)\bigg| \\
\leq& \int_{\mathbb{D}}\chi_{\mathbb{D}\setminus D(z,r)}|f(\zeta)|\left|\langle T^{*}B_{z}^{\omega}, B_{\zeta}^{\omega}\rangle_{A_\omega^{2}}\right|\omega(\zeta)dA(\zeta).
\end{aligned}
$$
Let $h(z)={\|B_{z}^{\omega}\|}_{A_\omega^2}^{\frac{\delta}{pp'}}$ and $H(z,\zeta)=\chi_{\mathbb{D}\setminus D(z,r)}(\zeta)\left|\langle T^{*}B_{z}^{\omega}, B_{\zeta}^{\omega}\rangle_{A_\omega^{2}}\right|$, where $z,\zeta \in \mathbb{D}$. Then \eqref{kernel-2} yields
$$
\begin{aligned}
 \int_{\mathbb{D}}H(z,\zeta){h(\zeta)}^{p'}\omega(\zeta)dA(\zeta)
\asymp {\|B_{z}^{\omega}\|}^{\frac{\delta}{p}}_{A_\omega^2}\int_{\mathbb{D}\setminus D(z,r)}\left|\langle T^{*}b_{z}^{\omega}, b_{\zeta}^{\omega}\rangle_{A_\omega^{2}}\right|
\frac{{\|B_{z}^{\omega}\|}^{1-\frac{\delta}{p}}_{A_\omega^2}}{{\|B_{\zeta}^{\omega}\|}^{1-\frac{\delta}{p}}_{A_\omega^2}}
d\lambda_\omega(\zeta)
< \varepsilon {h(z)}^{p'}.
\end{aligned}
$$
Similarly, we have
$$
\int_{\mathbb{D}}H(z,\zeta){h(z)}^{p}\omega(z)dA(z)
< \varepsilon {h(\zeta)}^{p'},\quad{\zeta\in\mathbb{D}}.
$$
Then Schur's test and a standard approximation argument complete the proof.
\end{proof}
Let $\mathcal{K}\big(A_\omega^p\big)$ be the set of all compact operators on $A_\omega^p$, and denote
$$
\|T\|_{e,A_\omega^p}=\inf\bigg\{\|T-K\|_{A_\omega^p}:K\in\mathcal{K}\big(A_\omega^p\big)  \bigg\}.
$$
\begin{theorem}\label{seg}
Let $\omega\in\mathcal{D}$ and $1<p<\infty$. If $T$ is in the norm closure of $\mathrm{WL}_\omega^p$, then there exists $r=r(T,\omega,p)>0$ such that
\begin{equation}\label{Tcv}
\|T\|_{e,A_\omega^p} \lesssim \limsup_{|z|\rightarrow 1^-} \left(\sup_{\zeta\in D(z,r)} \left|\langle Tb_{p,z}^\omega,b_{p',\zeta}^\omega \rangle_{A_\omega^2}\right|\right).
\end{equation}
\end{theorem}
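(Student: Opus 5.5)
Following the strategy of \cite[Theorem 1.5]{IMW}, I would combine Proposition~\ref{lp} with a local compactness argument. Fix $\varepsilon>0$ and choose $r$ from Proposition~\ref{lp}, so that $\|TP_\omega-\sum_j M_{\chi_{F_j}}TP_\omega M_{\chi_{G_j}}\|_{A^p_\omega\to L^p_\omega}<\varepsilon$. Since $A_\omega^p$ embeds isometrically in $L_\omega^p$ and $TP_\omega=T$ on $A_\omega^p$, it suffices to bound the essential norm of $\sum_j M_{\chi_{F_j}}TP_\omega M_{\chi_{G_j}}$ as an operator $A^p_\omega\to L^p_\omega$; the error $\varepsilon$ is harmless.

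Split the sum into indices $j$ with $F_j\cap \rho\D\neq\emptyset$ and the remaining ones. Because $\operatorname{diam}F_j\le 2r$, only finitely many $F_j$ meet $\rho\D$, and each $G_j$ is relatively compact in $\D$. Each such term $M_{\chi_{F_j}}TP_\omega M_{\chi_{G_j}}$ is compact, since restriction of analytic functions to the compact set $\overline{G_j}$ is compact from $A_\omega^p$ into $L^p(\overline{G_j})$ by the normal family argument. Hence the essential norm is bounded by $\varepsilon$ plus the norm of the tail sum $\sum_{j:\,F_j\subset\D\setminus\rho\D}M_{\chi_{F_j}}TP_\omega M_{\chi_{G_j}}$.

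For the tail I would use disjointness of the $F_j$ together with the bounded overlap of the $G_j$ from Lemma~\ref{cover}(i):
$$\Big\|\sum_j M_{\chi_{F_j}}TP_\omega M_{\chi_{G_j}}f\Big\|_{L^p_\omega}^p=\sum_j\|\chi_{F_j}TP_\omega(\chi_{G_j}f)\|_{L^p_\omega}^p\le \sup_j\|M_{\chi_{F_j}}TP_\omega M_{\chi_{G_j}}\|^p\, N\|f\|_{L^p_\omega}^p.$$
It remains to show $\|M_{\chi_{F_j}}TP_\omega M_{\chi_{G_j}}\|_{L^p_\omega\to L^p_\omega}\lesssim \sup_{z\in F_j,\ \zeta\in G_j}|\langle Tb^\omega_{p,z},b^\omega_{p',\zeta}\rangle|$, where the implied constant may depend on $r$. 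Here $z\in F_j$ and $\zeta\in G_j$ satisfy $\beta(z,\zeta)\le 3r$, say. From \eqref{FR}, the kernel of this localized operator is $\chi_{F_j}(z)\chi_{G_j}(\zeta)\overline{\langle T^*B_z^\omega,B_\zeta^\omega\rangle}$. Rewrite
$$\langle T^*B_z^\omega,B_\zeta^\omega\rangle=\|B_z^\omega\|_{A^{p'}_\omega}\|B_\zeta^\omega\|_{A^p_\omega}\,\overline{\langle Tb^\omega_{p,\zeta},b^\omega_{p',z}\rangle}.$$
By \eqref{kernel-2} and the hyperbolic-disc comparability $\widehat\omega(z)(1-|z|)\asymp\widehat\omega(\zeta)(1-|\zeta|)$ for $\beta(z,\zeta)\le 3r$, which follows from \eqref{Dhat1} and \eqref{Dcheck}, the product of norms is $\asymp \big(\widehat\omega(z)(1-|z|)\big)^{-1}$. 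Moreover, $\omega(G_j)\lesssim_r \omega(D(z,3r))\asymp \widehat\omega(z)(1-|z|)$ for radial doubling $\omega$. A Schur test with constant test function, or simply Hölder's inequality on the bounded-size sets, then gives the bound. Letting $\rho\to1$, the tail supremum is controlled by $\limsup_{|z|\to1}\sup_{\zeta\in D(z,3r)}|\langle Tb_{p,z}^\omega,b_{p',\zeta}^\omega\rangle|$. The roles of $z$ and $\zeta$ are symmetric up to enlarging $r$, so after relabeling the radius we obtain \eqref{Tcv}.

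The main obstacle is the last step: the estimate $\omega(D(z,r))\asymp\widehat\omega(z)(1-|z|)$ for $\omega\in\mathcal D$ without the A.5 condition. This needs care, since $\omega$ itself need not be comparable on hyperbolic discs. I would try to derive it by integrating in polar coordinates, using $\omega\in\widecheck{\mathcal D}$ to guarantee that a fixed fraction of $\widehat\omega$ lives on $[|z|,1-(1-|z|)/K]$, and choosing $r$ large relative to $K$. If needed, I would replace $G_j$ by a slightly larger set to ensure this radial band is contained in it.
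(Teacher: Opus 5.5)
Your proposal is correct and is essentially the paper's proof: you localize via Proposition~\ref{lp}, discard finitely many compact pieces, reduce the tail to a supremum over single pieces using disjointness of the $F_j$ and bounded overlap of the $G_j$, and bound each piece by H\"older's inequality. That last step uses \eqref{kernel-2}, the bound $\beta(z,\zeta)\le 3r$ on $F_j\times G_j$, and only the \emph{upper} bound $\omega(G_j)\lesssim\widehat{\omega}(z_j)(1-|z_j|)$ (the paper's $\lambda_\omega(G_j)\lesssim 1$), which follows from \eqref{Dhat1} alone; the lower bound you flag as the main obstacle is never needed.

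The one step to make explicit is how $\varepsilon$ is disposed of. Since $r$ depends on $\varepsilon$ and your constants depend on $r$, you cannot let $\varepsilon\to0$. Instead, as the paper does, take $\varepsilon=\|T\|_{e,A_\omega^p}/(2\|P_\omega\|_{L^p_\omega\to L^p_\omega})$ and absorb it into the left-hand side. The factor $\|P_\omega\|$ appears because a compact $K:A^p_\omega\to L^p_\omega$ must be replaced by the compact operator $P_\omega K$ on $A^p_\omega$; this, not the isometric embedding, is why the reduction to $A^p_\omega\to L^p_\omega$ works.
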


\begin{proof}
By Proposition \ref{lp}, there exists $r=r(T,\omega,p)> 0$ such that for the covering $\mathcal{F}_r = \{F_j\}$ associated with $r$, we have
$$
\|T P_\omega - \sum_j M_{\chi_{F_j}} T P_\omega M_{\chi_{G_j}}\|_{A_\omega^p \to L_\omega^p} < \frac{\|T\|_{e,A_\omega^p}}{2\|P_\omega\|_{L_\omega^p\rightarrow L_\omega^p}}.
$$
Since $T=P_\omega T P_\omega$ on $A_\omega^p$ and $\sum_{j < m} M_{\chi_{F_j}} T P_\omega M_{\chi_{G_j}}$ is compact for every $m \in \mathbb{N}$, using the fact that $P_\omega: L_\omega^p \rightarrow L_\omega^p$ is bounded,  we obtain
$$
\begin{aligned}
\|T\|_{e,A_\omega^p}
&\leq \left\|P_\omega T P_\omega -P_\omega \bigg(\sum_{j < m} M_{\chi_{F_j}} T P_\omega M_{\chi_{G_j}}\bigg)\right\|_{A_\omega^p \to A_\omega^p} \\
&\leq \|P_\omega\|_{L_\omega^p\rightarrow L_\omega^p} \left(\left\|T P_\omega - \sum_j M_{\chi_{F_j}} T P_\omega M_{\chi_{G_j}}\right\|_{A_\omega^p \to L_\omega^p} + \|T_m\|_{A_\omega^p \to L_\omega^p}\right) \\
&< \frac{1}{2} \|T\|_{e,A_\omega^p} + \|P_\omega\|_{L_\omega^p\rightarrow L_\omega^p}\|T_m\|_{A_\omega^p \to L_\omega^p},
\end{aligned}
$$
where
$$
T_m = \sum_{j \geq m} M_{\chi_{F_j}} T P_\omega M_{\chi_{G_j}}.
$$
Hence it suffices to show that there exists a constant $C(\omega,r,p)> 0$ such that
\begin{equation}\label{se}
\limsup_{m \to \infty} \left\|T_m\right\|_{A_\omega^p \to L_\omega^p} \leq C \limsup_{|z| \to 1^{-}} \left(\sup_{\zeta \in D(z, r)} \left| \left\langle T b_{p,z}^\omega, b_{p',\zeta}^\omega \right\rangle_{A_\omega^2} \right|\right) + \frac{\|T\|_{e,A_\omega^p}}{4\|P_\omega\|_{L_\omega^p\rightarrow L_\omega^p}} .
\end{equation}
To this end, let
$$
g_{j,\omega}(z)= \frac{ P_\omega M_{\chi_{G_j}} f (z)}{\| M_{\chi_{G_j}} f \|_{L_\omega^p} },\quad z\in\mathbb{D}.
$$
Then for $f\in A_\omega^p$, the finite overlap property of $G_j$ yields
$$
\begin{aligned}
\left\|T_m f\right\|_{L_\omega^p}^p &= \sum_{j \geq m}\| M_{\chi_{F_j}} T P_\omega M_{\chi_{G_j}} f \|_{L_\omega^p}^p \\
&= \sum_{j \geq m} \frac{ \| M_{\chi_{F_j}} T P_\omega M_{\chi_{G_j}} f \|_{L_\omega^p}^p }{ \| M_{\chi_{G_j}} f \|_{L_\omega^p}^p } \| M_{\chi_{G_j}} f \|_{L_\omega^p}^p \\
&\leq N \sup_{j \geq m} \| M_{\chi_{F_j}} T g_{j,\omega} \|_{L_\omega^p}^p\|f\|^p_{A_\omega^p}.
\end{aligned}
$$
It follows that
\begin{equation}\label{se3}
\limsup_{m \to \infty} \left\|T_m\right\|_{A_\omega^p \to L_\omega^p} \lesssim\limsup_{j \to \infty} \sup_{ \|f\|_{A_\omega^p} \leq 1}\| M_{\chi_{F_j}} T g_{j,\omega} \|_{L_\omega^p}.
\end{equation}
Now choose a sequence $\{f_j\} \subset A_\omega^p$ with $\|f_j\|_{A_\omega^p} \leq 1$ such that
\begin{eqnarray}\label{se1}
&&\limsup_{j \to \infty} \sup_{ \|f\|_{A_\omega^p} \leq 1 }  \| M_{\chi_{F_j}} T g_{j,\omega} \|_{L_\omega^p} \nonumber  \\
&\leq& \limsup_{j \to \infty} \bigg\| M_{\chi_{F_j}} T \frac{ P_\omega M_{\chi_{G_j}} f_j }{ \| M_{\chi_{G_j}} f_j \|_{L_\omega^p} } \bigg\|_{L_\omega^p}+\frac{\|T\|_{e,A_\omega^p}}{4\|P_\omega\|_{L_\omega^p\rightarrow L_\omega^p}}.
\end{eqnarray}
By an argument similar to that used in \eqref{FR}, we obtain
$$
\begin{aligned}
(TP_\omega M_{\chi_{G_j}} f_j)(z)
=&\int_{G_j} \overline{T^*B_z^\omega(\zeta)} f_j(\zeta)\omega(\zeta)dA(\zeta),\quad{z\in\mathbb{D}}.
\end{aligned}
$$
Then \eqref{kernel-2} shows that for every $j$,
\begin{eqnarray}\label{se2}
&&\bigg\| M_{\chi_{F_j}} T \frac{ P_\omega M_{\chi_{G_j}} f_j }{ \| M_{\chi_{G_j}} f_j \|_{L_\omega^p} } \bigg\|_{L_\omega^p}^p \nonumber\\
&\lesssim&\int_{F_j} \left(\frac{\int_{G_j} \left|\langle f_j, b_{p',\zeta}^\omega \rangle_{A_\omega^2}\right| |Tb_{p,\zeta}^\omega(z)|\|B_\zeta^\omega\|_{A_\omega^p} \|B_\zeta^\omega\|_{A_\omega^{p'}}\omega(\zeta) dA(\zeta)}{\left(\int_{G_j} |f_j(u)|^p\omega(u)dA(u)\right)^{1/p}} \right)^p \omega(z)dA(z)\nonumber \\
&\asymp& \int_{F_j} \left( \int_{G_j} |  H_{j,\omega}(\zeta) | \left| \langle T b_{p,\zeta}^\omega, b_{p',z}^\omega \rangle_{A_\omega^2} \right| d\lambda_\omega(\zeta) \right)^p d\lambda_\omega(z),
\end{eqnarray}
where
$$
H_{j,\omega}(\zeta)
=\frac{ \langle f_j, b_{p',\zeta}^\omega \rangle_{A_\omega^2} }{ \left( \int_{G_j} | \langle f_j, b_{p',u}^\omega \rangle_{A_\omega^2}|^p d\lambda_\omega(u) \right)^{1/p} },\quad{\zeta\in\mathbb{D}}.
$$
It follows from the proof of \cite[Proposition 2.11]{MW} that there exists $z_j\in\mathbb{D}$ such that $G_j\subset D(z_j,4r)$ for every $j$. Then \eqref{Dhat1} gives
$$
\lambda_\omega(G_j)
\leq \int_{D(z_j,4r)}\frac{\omega(\zeta)}{\widehat{\omega}(\zeta)(1-|\zeta|)}dA(\zeta)\asymp \frac{\omega\big(D(z_j,4r)\big)}{\widehat{\omega}(z_j)(1-|z_j|)}\lesssim 1, \quad j\in\mathbb{N}.
$$
Furthermore, it follows from Lemma \ref{cover} that $\beta(z,\zeta)\leq 3r$ for $z\in F_j$ and $\zeta \in G_j$. H\"{o}lder's inequality then yields
$$
\begin{aligned}
& \limsup_{j \to \infty} \int_{F_j} \left( \int_{G_j} |  H_{j,\omega}(\zeta) | \left| \langle T b_{p,\zeta}^\omega, b_{p',z}^\omega \rangle_{A_\omega^2}\right| d\lambda_\omega(\zeta) \right)^p d\lambda_\omega(z)\\
\leq&  \left(\limsup_{|z| \to 1^{-}}\left( \sup_{\zeta \in D(z, 3r)} \left|\langle T b_{p,z}^\omega, b_{p',\zeta}^\omega\rangle_{A_\omega^2}\right| ^p\right)\right) \cdot \left(\sup_j \left(\lambda_\omega(G_j) \right)^p \int_{G_j}  | H_{j,\omega}(\zeta)|^p d\lambda_\omega(\zeta)\right) \\
\lesssim&  \limsup_{|z| \to 1^{-}} \left(\sup_{\zeta \in D(z, 3r)} \left| \langle T b_{p,z}^\omega, b_{p',\zeta}^\omega\rangle_{A_\omega^2}\right|^p\right).
\end{aligned}
$$
This combined with \eqref{se3}, \eqref{se1} and \eqref{se2} implies \eqref{se}, which completes the proof.
\end{proof}
We then apply the affine transform to show that the vanishing behaviour of the quantity in the right-hand side of \eqref{Tcv} can be characterized by the boundary vanishing of the Berezin transform.
\begin{lemma}\label{equivalent}
 Let $\omega \in \widehat{\mathcal D}$, $1<p<\infty$ and $T$ be a bounded operator on ${A_\omega^p}$. Then
$
\lim\limits_{|z|\to 1^-}\widetilde{T}(z)=0
$ if and only if \begin{equation}\label{Berzin}
 \limsup_{|z|\rightarrow 1^-} \left( \sup_{\zeta\in D(z,r)} \left|\langle Tb_{p,z}^\omega,b_{p',\zeta}^\omega \rangle_{A_\omega^2}\right|\right)=0, \end{equation} for some (or equivalently for all) $r>0$.
\end{lemma}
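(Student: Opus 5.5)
We prove the two implications separately. The direction \eqref{Berzin} $\Rightarrow$ Berezin vanishing is a normalization check. The converse is a normal-families argument for a holomorphic function of two variables, built with an affine change of variables adapted to the hyperbolic disk $D(z,r)$; this affine map replaces the Möbius translations.

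\emph{The easy direction.} Take $\zeta=z$ in \eqref{Berzin}. Then
$$\langle Tb_{p,z}^\omega,b_{p',z}^\omega\rangle_{A_\omega^2}=\frac{\langle TB_z^\omega,B_z^\omega\rangle_{A_\omega^2}}{\|B_z^\omega\|_{A_\omega^p}\|B_z^\omega\|_{A_\omega^{p'}}}.$$
By \eqref{kernel-2}, valid for $\omega\in\widehat{\mathcal D}$,
$$\|B_z^\omega\|_{A_\omega^p}\|B_z^\omega\|_{A_\omega^{p'}}\asymp\big(\widehat\omega(z)(1-|z|)\big)^{-\frac{p-1}{p}-\frac{p'-1}{p'}}=\big(\widehat\omega(z)(1-|z|)\big)^{-1}\asymp\|B_z^\omega\|_{A_\omega^2}^2 .$$
Hence $|\widetilde T(z)|\asymp|\langle Tb_{p,z}^\omega,b_{p',z}^\omega\rangle_{A_\omega^2}|$, and \eqref{Berzin} for any single $r>0$ gives $\widetilde T(z)\to0$. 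The same comparison shows that Berezin vanishing is equivalent to \eqref{Berzin} with the supremum restricted to the diagonal $\zeta=z$.

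\emph{The affine change of variables.} Fix $r>0$. By elementary hyperbolic geometry there is $A=A(r)>1$ such that, for $|z|$ close to $1$, $D(z,r)$ lies in the image of $\{|\lambda|<A/2\}$ under
$$\psi_z(\lambda)=\frac{z}{|z|}\Big(1-(1-|z|)(A-\lambda)\Big).$$
This works because $D(z,r)$ is contained in a Euclidean disk of radius $\lesssim_r(1-|z|)$ whose points satisfy $1-|w|\asymp_r 1-|z|$. On the larger disk $\{|\lambda|<A(1-\eta)\}$ we have $1-|\psi_z(\lambda)|\ge \eta A(1-|z|)$ and $|\psi_z(\lambda)-z|\lesssim_r 1-|z|$. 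Therefore $\psi_z(\{|\lambda|<A(1-\eta)\})\subset D(z,r')$ for some $r'=r'(r,\eta)$, uniformly in $z$. Consequently, by \eqref{Dhat1} and \eqref{kernel-2}, $\|B_a^\omega\|_{A_\omega^q}\asymp\|B_z^\omega\|_{A_\omega^q}$ for $a$ in this image and $q\in\{2,p,p'\}$.

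\emph{The two-variable function.} On the bidisk $\Omega=\{|\lambda|,|\mu|<A(1-\eta)\}$ define
$$\Phi_z(\lambda,\mu)=\frac{\big(TB^\omega_{a(\lambda)}\big)\big(\psi_z(\mu)\big)}{\|B_z^\omega\|_{A_\omega^p}\|B_z^\omega\|_{A_\omega^{p'}}},\qquad a(\lambda)=\frac{z}{|z|}\Big(1-(1-|z|)(A-\bar\lambda)\Big).$$
Since $a\mapsto B_a^\omega$ is antiholomorphic and $a(\lambda)$ is antiholomorphic in $\lambda$, and since $TB^\omega_{a}$ is analytic in its argument, $\Phi_z$ is holomorphic on $\Omega$. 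Boundedness of $T$ together with the norm comparability gives $|\Phi_z|\lesssim\|T\|_{A_\omega^p\to A_\omega^p}$ uniformly in $z$. On the totally real set $\{\mu=\bar\lambda\}$ we have $a(\lambda)=\psi_z(\bar\lambda)$, so
$$|\Phi_z(\lambda,\bar\lambda)|\asymp|\widetilde T(\psi_z(\bar\lambda))| .$$
Off the diagonal, $|\Phi_z(\lambda,\mu)|\asymp|\langle Tb^\omega_{p,a(\lambda)},b^\omega_{p',\psi_z(\mu)}\rangle_{A_\omega^2}|$.

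\emph{The contradiction argument.} Assume $\widetilde T(z)\to0$ but \eqref{Berzin} fails. Then there are $\varepsilon>0$ and $z_n$ with $|z_n|\to1$, and $\zeta_n\in D(z_n,r)$, such that $|\langle Tb_{p,z_n}^\omega,b_{p',\zeta_n}^\omega\rangle_{A_\omega^2}|\ge\varepsilon$. In coordinates, $z_n=a(\lambda_n)$ and $\zeta_n=\psi_{z_n}(\mu_n)$ with $|\lambda_n|,|\mu_n|\le A/2$, so $|\Phi_{z_n}(\lambda_n,\mu_n)|\gtrsim\varepsilon$. By Montel's theorem, passing to a subsequence, $\Phi_{z_n}\to\Phi$ locally uniformly on $\Omega$ and $(\lambda_n,\mu_n)\to(\lambda_0,\mu_0)$. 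Thus $|\Phi(\lambda_0,\mu_0)|\gtrsim\varepsilon$.

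On the other hand, $\psi_{z_n}(\bar\lambda)\to\partial\mathbb D$ uniformly on compact sets, so $\Phi(\lambda,\bar\lambda)=0$ for all $\lambda$. Write $\Phi(\lambda,\mu)=\sum c_{jk}\lambda^j\mu^k$ near $0$. Then $\sum c_{jk}\lambda^j\bar\lambda^k\equiv0$, and by orthogonality of $\lambda^j\bar\lambda^k$ on circles together with varying the radius, every $c_{jk}=0$. Since $\Omega$ is connected, $\Phi\equiv0$, which is a contradiction.

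The step that needs the most care is the geometric part: choosing $\psi_z$ so that the image of a fixed disk covers $D(z,r)$, stays inside some $D(z,r')$, and makes $\Phi_z$ holomorphic with uniform bounds. Once the coordinates are set, the uniqueness theorem for holomorphic functions vanishing on $\{\mu=\bar\lambda\}$ is routine.
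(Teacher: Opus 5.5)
\paragraph{The geometric covering step fails for large $r$.}
This is the step you singled out, and as stated it is false once $r>\frac14\log 3$. The map $\psi_z(\lambda)=\frac{z}{|z|}\bigl(1-A(1-|z|)\bigr)+\frac{z}{|z|}(1-|z|)\lambda$ is affine. Hence $\psi_z(\{|\lambda|<A/2\})$ is the Euclidean disk centred on the ray through $z$, with radius $\frac A2(1-|z|)$ and distance $\frac A2(1-|z|)$ to $\partial\mathbb{D}$. Its points satisfy exactly $\frac A2(1-|z|)<1-|w|<\frac{3A}{2}(1-|z|)$. The offset and the radius both scale with $A$, so changing $A$ does not change the hyperbolic size: the image is a Bergman disk whose radius tends to $\frac14\log 3$ as $|z|\to1$.

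By contrast, put $t=\tanh r$. Then $D(z,r)$ is a Euclidean disk on the same ray, and along it $1-|w|$ ranges over $\bigl(\frac{1-t}{1+t|z|}(1-|z|),\frac{1+t}{1-t|z|}(1-|z|)\bigr)$, i.e. essentially $\bigl(e^{-2r}(1-|z|),e^{2r}(1-|z|)\bigr)$. Your containment would force, as $|z|\to1$, $\frac23e^{2r}\le A\le 2e^{-2r}$, which is impossible for $r>\frac14\log 3$. So as written you only get the hard direction for small $r$. The lemma, however, is used with large radii: with $3r$, where $r$ comes from Proposition \ref{lp}, in Theorem \ref{seg}.

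\paragraph{How to repair it.}
Give the $\lambda$-disk radius $\kappa A$ with $\kappa$ close to $1$, instead of $A/2$. For instance take $A=e^{2r}$ and $1-e^{-4r}<\kappa<1$.
\begin{itemize}
\item Then $\psi_z(\{|\lambda|<\kappa A\})$ covers $(1-\kappa)A(1-|z|)<1-|w|<(1+\kappa)A(1-|z|)$.
\item Here $(1-\kappa)A<e^{-2r}=\frac{1-t}{1+t}$ and $(1+\kappa)A>e^{2r}=\frac{1+t}{1-t}$.
\item Two disks centred on one line are nested exactly when their diameters on that line are. Hence $D(z,r)\subset\psi_z(\{|\lambda|<\kappa A\})$ whenever $1-|z|<e^{-2r}$.
\end{itemize}
Now choose $\eta<1-\kappa$ and replace $A/2$ by $\kappa A$ throughout. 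The set $\{|\lambda|,|\mu|\le\kappa A\}$ is a compact subset of $\Omega$ containing all $(\lambda_n,\mu_n)$, and $r'$ depends only on $r$. The rest of your argument goes through unchanged: the easy direction, holomorphy and the uniform bound for $\Phi_z$, Montel, vanishing on $\{\mu=\bar\lambda\}$, and the power-series uniqueness.

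\paragraph{Comparison with the paper.}
With this fix your proof is correct, and it is a two-variable version of the paper's argument. The paper maps $\mathbb{D}$ affinely onto $D(z_j,s)$ with $s>r$. It joins $\Psi_j^{-1}(z_j)$ to $\Psi_j^{-1}(\zeta_j)$ by a segment $\psi_j$ defined on a neighbourhood of $[-1,1]$. It then studies the one-variable holomorphic function $F_j(u)\propto\langle TB^\omega_{\Psi_j(\psi_j(-\bar u))},B^\omega_{\Psi_j(\psi_j(u))}\rangle_{A^2_\omega}$. On the imaginary axis $-\bar u=u$, so $F_j$ there is a rescaled Berezin value, while $F_j(1)$ is the bad pair. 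The one-variable identity theorem at $u_\ell=i\delta/(\ell+1)\to0$ finishes.

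You instead keep both arguments free and use that a holomorphic function on a bidisk vanishing on the totally real set $\{\mu=\bar\lambda\}$ vanishes identically. Your route is more symmetric and handles all pairs with one fixed compact set. It costs Montel and the identity theorem in two variables, plus your short power-series lemma. The paper's route needs only one-variable tools, but uses a pair-dependent segment and the reflection $u\mapsto-\bar u$. Both are instances of the same polarization principle for the sesqui-holomorphic kernel $(a,b)\mapsto\langle TB^\omega_a,B^\omega_b\rangle_{A^2_\omega}$.
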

\begin{proof}
Assume $\widetilde{T}(z)\to 0$ as $|z|\to 1^-$, but \eqref{Berzin} fails for some fixed $r>0$. Then there exist $\varepsilon>0$,
$z_j=\rho_j e^{i\theta_j}$ satisfying $\rho_j\to1^-$ as $j\to \infty$, and $\zeta_j\in D(z_j,r)$ such that
\begin{equation}\label{contray}
\left|
\left\langle
 T b_{p,z_j}^\omega, b^\omega_{p',\zeta_j}
\right\rangle_{A_\omega^2}
\right|
\ge\varepsilon.
\end{equation}
Fix a constant $s$ such that $r<s<\infty.$ Write \[
a_{j,s}
=
\frac{1-(\tanh s)^2}
{1-(\tanh s)^2\,\rho_j^2}\,z_j,
\qquad
R_{j,s}
=
\frac{(\tanh s)^2\,(1-\rho_j^2)}
{1-(\tanh s)^2\,\rho_j^2}.
\] Now let $\Psi_j(u)=a_{j,s}+e^{i\theta_j}R_{j,s}u$ for all $u\in\D.$
Then $\Psi_j(\D)=D(z_j,s)$ and $ \Psi_j(\rho_j\tanh s)=z_j.$ Moreover,
note that \[
\xi_j
=\Psi_j^{-1}(\zeta_j)=
e^{-i\theta_j}
\frac{\zeta_j-a_{j,s}}{R_{j,s}}, \quad \zeta_j\in D(z_j,r).
\]
It follows that
\begin{align*}
|\xi_j|
&\le
\frac{|\zeta_j-a_{j,r}|+|a_{j,r}-a_{j,s}|}
{R_{j,s}}<
\frac{R_{j,r}+|a_{j,r}-a_{j,s}|}
{R_{j,s}}
=\frac{\rho_j(\tanh s)^2+\tanh r}
{\tanh s\,(1+\rho_j\tanh r)}
< c,
\end{align*}
where $c=\frac{(\tanh s)^2+\tanh r}
{\tanh s\,(1+\tanh r)}$.
Set $\iota=\iota(r,s)=\max\left\{\tanh s,c\right\}$ and choose $\delta \in \left(0, \frac{1-\iota}{\iota}\right)$. Let
$$
\Omega_\delta
=
\left\{u\in\mathbb C: \inf_{t\in[-1,1]} |u-t|<\delta\right\},
$$
and define
$$\label{convex}
\psi_j(u)
=
\frac{\rho_j\tanh s+\xi_j}{2}
+
\frac{\xi_j-\rho_j\tanh s}{2} u, \quad u\in \Omega_\delta.
$$
Observe that $|\psi_j(t)|\le \iota$ whenever $-1\le t\le1$, and furthermore, $|\psi_j'|\le \iota.$
For $u\in\Omega_\delta$, there exists $t_0\in[-1,1]$ such that $|u-t_0|<\delta$. Then we have
\[
|\psi_j(u)|
\le
|\psi_j(t_0)|+|\psi_j'|\,|u-t_0|
<\iota(1+\delta)<1,
\]
which implies $\psi_j(\Omega_\delta)\subset\D.$
Now we can define a family $\{F_j\}\subset H(\Omega_\delta)$ by
$$\label{help}
F_j(u)
=\frac{1}{\|B_{z_j}^\omega\|_{A_\omega^p}
\|B_{z_j}^\omega\|_{A_\omega^{p'}}}\left\langle
T B_{\Psi_j\big(\psi_j(-\overline{u})\big)}^\omega,
B_{\Psi_j\big(\psi_j(u)\big)}^\omega
\right\rangle_{A_\omega^2},
\quad u\in\Omega_\delta.
$$
Since $\Psi_j(\D)=D(z_j,s)$, then H\"older's inequality together with \eqref{kernel-2} and \eqref{Dhat1} implies that
$$
|F_j(u)|
\le
\|T\|_{A_\omega^p\rightarrow A_\omega^p}
\frac{1}{\|B_{z_j}^\omega\|_{A_\omega^p}
\|B_{z_j}^\omega\|_{A_\omega^{p'}}}
\bigg\|B_{\Psi_j\big(\psi_j(-\overline{u})\big)}^\omega\bigg\|_{A_\omega^p}
\left\|B_{\Psi_j\big(\psi_j(u)\big)}^\omega\right\|_{A_\omega^{p'}}
\lesssim\|T\|_{A_\omega^p\rightarrow A_\omega^p},
\quad u\in\Omega_\delta.
$$
Thus $\{F_j\}$ is a uniformly bounded family in $H(\Omega_\delta)$. By Montel's theorem, there exist a subsequence, still denoted by $\{F_j\}$, and a function $F\in H(\Omega_\delta)$ such that
$ F_j\rightarrow F$ uniformly on compact subsets of $\Omega_\delta.$

Moreover, write
$u_\ell=\frac{i\delta}{\ell+1}$ for $\ell\geq 1.$
Note that $u_\ell\in\Omega_\delta$. Since $\psi_j(u_\ell)\in\D$, we have $\Psi_j(\psi_j(u_\ell))\in D(z_j,s)$ and $
|\Psi_j(\psi_j(u_\ell))|\rightarrow1^-$ as $j\to\infty$.
Then for each fixed $\ell$, using the assumption that $\widetilde{T}$ vanishes on the boundary, along with \eqref{kernel-2} and \eqref{Dhat1}, we obtain
$$
F_j(u_\ell)
=\frac{1}{\|B_{z_j}^\omega\|_{A_\omega^p}
\|B_{z_j}^\omega\|_{A_\omega^{p'}}}
\widetilde T\bigg(\Psi_j\big(\psi_j(u_\ell)\big)\bigg)\left\|B_{\Psi_j\big(\psi_j(u_\ell)\big)}^\omega\right\|_{A_\omega^2}^2
\asymp \widetilde T\bigg(\Psi_j\big(\psi_j(u_\ell)\big)\bigg)\to 0, \quad j\to \infty.
$$
It follows that $F(u_\ell)=0,$ for each $\ell\ge1.$ This together with the fact that $u_\ell\to 0$ as $\ell\to \infty$ and the continuity gives $F(0)=0$.
Therefore, we conclude that $F\equiv0$ on $\Omega_\delta.$

However, combining \eqref{kernel-2}, \eqref{Dhat1} and \eqref{contray},  for $\zeta_j\in D(z_j,r)$ we have
$$
|F_j(1)|
=
\frac{1}{\|B_{z_j}^\omega\|_{A_\omega^p}
\|B_{z_j}^\omega\|_{A_\omega^{p'}}}\left|\left\langle
T B_{\Psi_j(\rho_j \tanh s)}^\omega,
B_{\Psi_j(\xi_j)}^\omega
\right\rangle_{A_\omega^2}\right|
=\left|
\left\langle
T b_{p,z_j}^\omega, b^\omega_{p',\zeta_j}
\right\rangle_{A_\omega^2}
\right|
\frac{
\|B_{\zeta_j}^\omega\|_{A_\omega^{p'}}
}{
\|B_{z_j}^\omega\|_{A_\omega^{p'}}}\gtrsim\varepsilon,
$$
which contradicts $F(1)=0$. This proves \eqref{Berzin}. Then we complete the proof since the converse direction follows immediately.
\end{proof}
\color{black}
\vspace{8pt}
\noindent \textbf{Proof of Theorem \ref{main}.} The sufficiency follows directly from Theorem \ref{seg} and Lemma \ref{equivalent}. For the necessity,
by \cite[Lemma 10]{PRS}, $b_{p,z}^\omega$ converges weakly to $0$ in~$A_\omega^p$ as $|z|\rightarrow 1^-$. Therefore \eqref{kernel-2} yields $\widetilde{T}(z)\lesssim \|T b_{p,z}^\omega\|_{A_\omega^p}\rightarrow 0,$ as $ |z|\rightarrow 1^-,$
which completes the proof.
\qed

\vspace{8pt}

Recall that for $0<p,q<\infty$ and a positive Borel measure $\mu$ on $\mathbb{D}$, $\mu$ is called a $q$-Carleson measure for $A_\omega^p$ if the identity operator $I:A_\omega^p\rightarrow L_\mu^q$ is bounded.
The following result shows that every compact operator acting on the $\mathcal{D}$-weighted Bergman space belongs to the norm closure of $\mathrm{WL}_\omega^p$.
\begin{proposition}\label{comWL}
Let $\omega \in {\mathcal{D}}$ and $1<p<\infty$. If a linear bounded operator $T$ is compact on~$A_\omega^p$, then $T$ belongs to the norm closure of $\mathrm{WL}_\omega^p$.
\end{proposition}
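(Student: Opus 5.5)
The plan is to approximate the compact operator $T$ in norm by operators of the form $M_{\chi_{\overline{D(0,\rho)}}}$-localized pieces, namely $T_\rho:=P_\omega M_{\chi_{K_\rho}} T P_\omega M_{\chi_{K_\rho}}$ restricted to $A_\omega^p$, or more simply $T_\rho = T_{\chi_{K_\rho}}^\omega\, T\, T_{\chi_{K_\rho}}^\omega$. Here $K_\rho=\{|z|\le\rho\}$, and $T_{\chi_{K_\rho}}^\omega$ is the Toeplitz operator with compactly supported symbol. Two things then need to be checked: (a) $\|T-T_\rho\|_{A_\omega^p\to A_\omega^p}\to0$ as $\rho\to1^-$; (b) each $T_\rho\in\mathrm{WL}_\omega^p$.

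For (a), I write $T-T_\rho=(I-T^\omega_{\chi_{K_\rho}})T+T^\omega_{\chi_{K_\rho}}T(I-T^\omega_{\chi_{K_\rho}})$.

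The first step is to show that $T^\omega_{\chi_{K_\rho}}$ is uniformly bounded on $A_\omega^p$. This follows from boundedness of $P^+_\omega$ on $L^p_\omega$ (Theorem~\ref{main1} with the trivial weight, or the known one-weight result). It also gives uniform boundedness on $A_\omega^{p'}$.

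The second step is to show $T^\omega_{\chi_{K_\rho}}\to I$ strongly on $A_\omega^p$. On polynomials, or on the span of kernels, $\|(I-T^\omega_{\chi_{K_\rho}})f\|_{A^p_\omega}\le\|P_\omega\|\,\|\chi_{\mathbb{D}\setminus K_\rho}f\|_{L^p_\omega}\to0$. Density and uniform boundedness extend this to all of $A_\omega^p$. Then compactness of $T$ upgrades strong convergence to norm convergence of $(I-T^\omega_{\chi_{K_\rho}})T$.

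For the second term, I use adjoints under the $A^2_\omega$ pairing. The adjoint of $T(I-T^\omega_{\chi_{K_\rho}})$ on $A^{p'}_\omega$ is $(I-T^\omega_{\chi_{K_\rho}})T^*$ with $T^*$ compact, and the same strong convergence argument on $A^{p'}_\omega$ applies. This yields (a).

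For (b), the key point is that $T_\rho$ has a smooth, rapidly decaying kernel. Writing $T^\omega_{\chi_{K_\rho}}g=\int_{K_\rho} g(u)B^\omega_u\,\omega\,dA(u)$, I get
$$\langle T_\rho b_z^\omega,b_\zeta^\omega\rangle_{A^2_\omega}=\int_{K_\rho}\int_{K_\rho} b_z^\omega(v)\,\langle TB_v^\omega,B_u^\omega\rangle_{A_\omega^2}\,\overline{b_\zeta^\omega(u)}\,\omega(u)\omega(v)\,dA(u)\,dA(v).$$
Since $B_u^\omega,B_v^\omega$ are uniformly bounded in $A^p_\omega$ and $A^{p'}_\omega$ for $u,v\in K_\rho$, and $\sup_{|v|\le\rho}|b_z^\omega(v)|\lesssim\|B_z^\omega\|_{A^2_\omega}^{-1}$ because $B^\omega_v$ is bounded in $H^\infty$ for $|v|\le\rho$, this gives
$$|\langle T_\rho b_z^\omega,b_\zeta^\omega\rangle|\lesssim_\rho \|B_z^\omega\|_{A^2_\omega}^{-1}\|B_\zeta^\omega\|_{A^2_\omega}^{-1}.$$
The same bound holds for $T_\rho^*$.

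Conditions \eqref{WL1}--\eqref{WL2} then reduce to
$$\sup_z\|B_z^\omega\|^{-\delta/p'}\int_{\mathbb D}\|B_\zeta^\omega\|_{A^2_\omega}^{-2+\delta/p'}\,d\lambda_\omega(\zeta)<\infty.$$
By \eqref{kernel-2}, the integrand is $(\widehat\omega(\zeta)(1-|\zeta|))^{1-\delta/(2p')}$ against $d\lambda_\omega$. That gives $\int_0^1\omega(t)\,\widehat\omega(t)^{-\delta/(2p')}(1-t)^{-\delta/(2p')}\,dt$, which is finite by \eqref{Dcheck}/\eqref{Dhat1} once $\delta$ is small, and small $\delta$ is permitted in \eqref{delta}. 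The prefactor $\|B_z^\omega\|^{-\delta/p'}$ is bounded.

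For the tail conditions \eqref{WL3}--\eqref{WL4}, I keep the decaying factor. The tail integral is at most
$$\|B_z^\omega\|^{-\delta/p'}\int_{\mathbb D\setminus D(z,r)}(\cdots)\,d\lambda_\omega.$$
When $|z|$ is close to $1$ the prefactor is small. For $|z|\le\rho_0$, the sets $D(z,r)$ exhaust $\mathbb D$ uniformly as $r\to\infty$, so the tail of the convergent integral vanishes uniformly. Combining the two regimes gives the limit $0$.

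I expect the main obstacle to be the strong convergence $T^\omega_{\chi_{K_\rho}}\to I$, in particular the uniform boundedness of these Toeplitz operators on both $A^p_\omega$ and $A^{p'}_\omega$, together with the precise convergence of the integral in (b) for admissible $\delta$.
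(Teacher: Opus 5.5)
Your argument is correct, but it takes a different route from the paper. The paper reduces to finite-rank operators. It notes that $1\otimes 1=(2\omega_1)^2T^\omega_{\delta_0}$ lies in $\mathrm{WL}_\omega^p$. It then gets $f\otimes g$ for polynomials $f,g$ by multiplying $1\otimes 1$ on both sides by Toeplitz operators with polynomial symbols and using the algebra property (Proposition \ref{algebra}). Finally it appeals to a Banach-space fact: finite-rank operators are dense in $\mathcal K(A^p_\omega)$, via the basis/approximation property of $A^p_\omega$.

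You instead compress $T$ by $T^\omega_{\chi_{K_\rho}}$ on both sides. Norm convergence $T_\rho\to T$ follows from the strong convergence $T^\omega_{\chi_{K_\rho}}\to I$ on both $A^p_\omega$ and $A^{p'}_\omega$, together with compactness of $T$ and of its adjoint under the $A^2_\omega$ duality. Membership $T_\rho\in\mathrm{WL}^p_\omega$ you check directly from the bound $|\langle T_\rho b_z^\omega,b_\zeta^\omega\rangle_{A^2_\omega}|\lesssim_\rho\|B_z^\omega\|^{-1}_{A^2_\omega}\|B_\zeta^\omega\|^{-1}_{A^2_\omega}$. This is exactly the size of $|\langle(1\otimes 1)b_z^\omega,b_\zeta^\omega\rangle_{A^2_\omega}|$. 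So both proofs ultimately rest on the same integrability fact, $\int_0^1\omega(t)\bigl(\widehat\omega(t)(1-t)\bigr)^{-\delta/(2p')}dt<\infty$ for small $\delta$, and this genuinely needs $\omega\in\widecheck{\mathcal D}$.

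Your route is more self-contained. It uses only the boundedness of $P_\omega$ on $L^p_\omega$ and $L^{p'}_\omega$ and the duality $(A^p_\omega)^*\simeq A^{p'}_\omega$. It avoids both the approximation property of $A^p_\omega$ and the algebra property of $\mathrm{WL}^p_\omega$. The paper's route instead fits the structural picture, producing finite-rank operators from Toeplitz operators inside the algebra $\mathrm{WL}^p_\omega$.

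Two small simplifications are available. First, $(I-T^\omega_{\chi_{K_\rho}})f=P_\omega(\chi_{\mathbb D\setminus K_\rho}f)$ holds for every $f\in A^p_\omega$, so the strong convergence you flagged as the main obstacle follows at once from dominated convergence, with no density step. Second, you can skip the double integral, since $|\langle T_\rho b_z^\omega,b_\zeta^\omega\rangle_{A^2_\omega}|\le\|T\|\,\|T^\omega_{\chi_{K_\rho}}b_z^\omega\|_{A^p_\omega}\|T^\omega_{\chi_{K_\rho}}b_\zeta^\omega\|_{A^{p'}_\omega}$ and $\|T^\omega_{\chi_{K_\rho}}b_z^\omega\|_{A^p_\omega}\lesssim\sup_{K_\rho}|b_z^\omega|\lesssim_\rho\|B_z^\omega\|_{A^2_\omega}^{-1}$.
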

\begin{proof}
For $f\in A_\omega^p$ and $g\in A_\omega^{p'}$, let $(f\otimes g)h=\langle h,g \rangle_{A_\omega^2} f$ for $h\in A_\omega^p$. We first assume $f$ and $g$ are polynomials.
Note that
\begin{equation}\label{fr2}
(1\otimes 1)h(z)
=\langle h,1 \rangle_{A_\omega^2}
=2\omega_1 h(0),\quad h\in A_\omega^p.
\end{equation}
Let $\delta_0$ be the Dirac measure with mass concentrated at $0$.
Then the Toeplitz operator induced by $\delta_0$ satisfies
$$
T^\omega_{\delta_0}h(z)=\int_{\mathbb{D}} h(\zeta)\overline{B_z^\omega(\zeta)} d\delta_0(\zeta)= \frac{1}{2\omega_1} h(0).
$$
This together with \eqref{fr2} gives $(1\otimes 1)=(2\omega_1)^2T^\omega_{\delta_0}.$ Moreover,
since $\delta_0$ is a Carleson measure for $A_\omega^p$, by an argument similar to the proof of Proposition \ref{TinWL}, it follows that
$T^\omega_{\delta_0}\in \mathrm{WL}_\omega^p$, and hence $1\otimes 1\in \mathrm{WL}_\omega^p$.
Fubini's theorem yields
$$\label{fr1}
\begin{aligned}
T^\omega_f(1\otimes 1)T^\omega_g (h)(z)
=(f\otimes g)h (z),\quad z\in\mathbb{D} ,\quad h\in A_\omega^p.
\end{aligned}
$$
This combined with $1\otimes 1\in \mathrm{WL}_\omega^p$ and Proposition \ref{algebra} implies $f \otimes g\in \mathrm{WL}_\omega^p$.
By \cite[p. 69-70]{Wo} and a standard density argument, the proof is completed.
\end{proof}
Combining Theorem \ref{main} and Proposition \ref{comWL}, we obtain the following result, extending \cite[Theorem 9.5]{Su} to the $\mathcal{D}$-weight setting.
\begin{corollary}
Let $\omega\in\mathcal{D}$, $1<p<\infty$ and $T$ be a bounded linear operator on $A_\omega^p$. Then $T$ is compact on $A_\omega^p$ if and only if $T$ belongs to the norm closure of $\mathrm{WL}_\omega^p$ and $
\lim\limits_{|z|\to 1^-}\widetilde{T}(z)=0.
$
\end{corollary}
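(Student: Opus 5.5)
The corollary follows by combining Theorem \ref{main} with Proposition \ref{comWL}; I will treat the two implications separately.

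\emph{Necessity.} Suppose $T$ is compact on $A_\omega^p$.
\begin{itemize}
\item Proposition \ref{comWL} gives directly that $T$ lies in the norm closure of $\mathrm{WL}_\omega^p$.
\item For the Berezin transform, I use the argument from the necessity part of Theorem \ref{main}. By \cite[Lemma 10]{PRS}, $b_{p,z}^\omega\to 0$ weakly in $A_\omega^p$ as $|z|\to1^-$.
\item A compact operator maps weakly null nets to norm null ones, so $\|Tb_{p,z}^\omega\|_{A_\omega^p}\to0$.
\item Write
$$\widetilde{T}(z)=\langle Tb_{p,z}^\omega,b_{p',z}^\omega\rangle_{A_\omega^2}\frac{\|B_z^\omega\|_{A_\omega^p}\|B_z^\omega\|_{A_\omega^{p'}}}{\|B_z^\omega\|_{A_\omega^2}^2}.$$
By \eqref{kernel-2}, the ratio of norms is bounded uniformly in $z$.
\item Duality between $A_\omega^p$ and $A_\omega^{p'}$ under the $A_\omega^2$ pairing, valid for $\omega\in\mathcal{D}$ since $P_\omega$ is bounded, gives $|\widetilde{T}(z)|\lesssim\|Tb_{p,z}^\omega\|_{A_\omega^p}\to0$.
\end{itemize}

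\emph{Sufficiency.} Suppose $T$ lies in the norm closure of $\mathrm{WL}_\omega^p$ and $\widetilde{T}(z)\to0$ as $|z|\to1^-$.
\begin{itemize}
\item Theorem \ref{main} yields compactness at once.
\item Spelled out: Lemma \ref{equivalent} turns the vanishing of $\widetilde{T}$ into \eqref{Berzin} for every $r>0$.
\item In particular \eqref{Berzin} holds for the radius $r(T,\omega,p)$ provided by Theorem \ref{seg}.
\item Then \eqref{Tcv} gives $\|T\|_{e,A_\omega^p}=0$, so $T$ is compact.
\end{itemize}

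There is no genuine obstacle here, since all the analytic work is done in Theorem \ref{seg}, Lemma \ref{equivalent} and Proposition \ref{comWL}. The only point needing care is in the necessity direction: the hypothesis is compactness on $A_\omega^p$, while the Berezin transform is defined through the $A_\omega^2$ structure. The normalization comparison via \eqref{kernel-2} and the $A_\omega^p$–$A_\omega^{p'}$ duality handle this, and $\widetilde{T}$ remains well defined because $b_z^\omega$ lies in both spaces.
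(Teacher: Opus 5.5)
Your proposal is correct and matches the paper's argument: the paper obtains the corollary by combining Proposition \ref{comWL} with both directions of Theorem \ref{main}. The necessity part of Theorem \ref{main} is the weak-null normalized-kernel argument you reproduce, and its sufficiency part is Theorem \ref{seg} together with Lemma \ref{equivalent}, exactly as you spell out. One small simplification: the bound $|\langle Tb_{p,z}^\omega,b_{p',z}^\omega\rangle_{A_\omega^2}|\le\|Tb_{p,z}^\omega\|_{A_\omega^p}$ is just H\"older's inequality, so you do not need the $A_\omega^p$--$A_\omega^{p'}$ duality or the boundedness of $P_\omega$ there.
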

In the following, we provide several further characterizations about the compactness for the operators in $\mathrm{WL_\omega^p}$, which generalizes \cite[Proposition 3.2 and Theorem 3.3]{MSWW}.
\begin{proposition}\label{WLequ}
Let $\omega\in\mathcal{D}$ and $1<p<\infty$. If $T\in \mathrm{WL}_\omega^p$, then
the following statements are equivalent:
\begin{itemize}
\item[(i)] $T$ is compact on $A_\omega^p$;
\item [(ii)]$\lim\limits_{|z|\to 1^-}\widetilde{T}(z)=0$;
\item[(iii)]$\lim\limits_{|z|\rightarrow 1^-}\sup\limits_{\zeta\in D(z,r)}|\langle T b_{p,z}^\omega,b_{p',\zeta}^\omega \rangle_{A_\omega^2}|= 0$ for sufficiently large $r=r(T,\omega,p)>0$;
\item[(iv)]For $0<\delta<\min\{p(1-\gamma(\omega)), p'(1-\gamma(\omega))\}$,
$$\lim_{R\rightarrow\infty}\sup_{z\in\mathbb{D}}\int_{\mathbb{D}\setminus D(0,R)}\left|\langle Tb_z^\omega,  b_{\zeta}^\omega\rangle_{A_\omega^2}\right|\frac{\|B_z^\omega\|_{A_\omega^2}^{1-\frac{\delta}{p'}}}{\|B_\zeta^\omega\|_{A_\omega^2}^{1-\frac{\delta}{p'}}} d\lambda_\omega(\zeta)=0. $$
\end{itemize}
\end{proposition}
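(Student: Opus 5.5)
The equivalences (i)$\Leftrightarrow$(ii)$\Leftrightarrow$(iii) are essentially already available. Since $T\in\mathrm{WL}_\omega^p$, it lies in the norm closure of $\mathrm{WL}_\omega^p$, so Theorem \ref{main} gives (i)$\Leftrightarrow$(ii). Lemma \ref{equivalent} gives (ii)$\Leftrightarrow$(iii), because Lemma \ref{Bound} shows $T$ is bounded on $A_\omega^p$, and the lemma holds for every $r>0$, in particular for large $r$. It therefore remains to connect (iv) to the others. I plan to prove (iv)$\Rightarrow$(i) and (i)$\Rightarrow$(iv).

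\emph{(iv)$\Rightarrow$(i).} I would show that $T$ is a norm limit of compact operators. Fix $R$ and split
\[
T = T M_{\chi_{D(0,R)}}^{\sharp} + T M_{\chi_{\mathbb{D}\setminus D(0,R)}}^{\sharp}.
\]
This is realized through the reproducing formula
\[
Tf(z)=\int_{\mathbb{D}} f(\zeta)\,\overline{\langle T^*B_z^\omega,B_\zeta^\omega\rangle}\,\omega(\zeta)\,dA(\zeta),
\]
or, more conveniently, through its dual form obtained by integrating in the output variable: for $g$,
\[
\langle Tf,g\rangle=\int_{\mathbb{D}} Tf(\zeta)\,\overline{g(\zeta)}\,\omega\,dA,
\]
and one restricts $\zeta$ to $D(0,R)$ or to its complement. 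Concretely, set $Q_R=P_\omega M_{\chi_{D(0,R)}}T$ on $A_\omega^p$, written as an integral operator with kernel built from $\langle Tb_z^\omega,b_\zeta^\omega\rangle$. The operator $P_\omega M_{\chi_{D(0,R)}}$ is compact on $A^p_\omega$, since $D(0,R)$ is relatively compact and restriction to a compact set is compact by Montel's theorem. Hence $Q_R$ is compact. For the remainder $P_\omega M_{\chi_{\mathbb{D}\setminus D(0,R)}}T$, I would run Schur's test with $h(z)=\|B_z^\omega\|_{A_\omega^2}^{\delta/(pp')}$, exactly as in Lemma \ref{Bound} and Proposition \ref{lp}. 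One Schur integral is precisely the quantity in (iv), which tends to zero. The other is controlled by \eqref{WL2}, which gives only boundedness. With Schur's test in its product form $\|S\|\le C_1^{1/p'}C_2^{1/p}$, the product of one small constant and one bounded constant still tends to $0$ as $R\to\infty$. Hence $\|T-Q_R\|\to0$ and $T$ is compact.

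\emph{(i)$\Rightarrow$(iv).} This is the main obstacle. Split the integral in (iv) into two regions.

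For $z$ with $|z|$ large, the integral over $\mathbb{D}\setminus D(0,R)$ is bounded by the whole integral over $\mathbb{D}\setminus D(z,r)$ plus the part over $D(z,r)$. The first piece is small uniformly in $z$ by \eqref{WL3}, once $r$ is fixed large. The second piece is at most
\[
\lambda_\omega(D(z,r))\,\sup_{\zeta\in D(z,r)}|\langle Tb_z^\omega,b_\zeta^\omega\rangle|
\]
times a factor that is bounded by \eqref{kernel-2} and \eqref{Dhat1}, since the norms of $B_z$ and $B_\zeta$ are comparable on hyperbolic discs. By (iii) this tends to $0$ as $|z|\to1$.

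For $z$ in a fixed compact set $\overline{D(0,\rho)}$, I need the tail $\int_{\mathbb{D}\setminus D(0,R)}$ to vanish uniformly in $z$. I would use \eqref{WL1} together with continuity of $z\mapsto Tb_z^\omega$ in $A^2_\omega$ and dominated convergence: the family of integrands is uniformly integrable on compacta of $z$. A cleaner alternative is to approximate $T$ by finite-rank operators $\sum f_k\otimes g_k$ with polynomial entries, as in Proposition \ref{comWL}, for which the tail is explicitly estimated via Lemma \ref{kc}-type bounds. The delicate point is that the weighted norm in (iv) is not obviously continuous under operator-norm approximation. So I would try the first route: fix $r$ and, given $\varepsilon$, choose $\rho$ with the large-$|z|$ part below $\varepsilon$. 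On $|z|\le\rho$, note that $D(z,r)\subset D(0,\rho')$, and the integral over $\mathbb{D}\setminus D(z,r)$ is already below $\varepsilon$ by \eqref{WL3}. Taking $R>\rho'+r$ then finishes the argument.
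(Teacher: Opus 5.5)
Your proposal is correct, and most of it follows the paper.

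\textbf{Where you agree with the paper.} The equivalences (i)$\Leftrightarrow$(ii)$\Leftrightarrow$(iii) rest on the same ingredients as the paper's, namely Lemma \ref{equivalent} and Theorem \ref{seg} via Theorem \ref{main}. Your final argument that (iii) implies (iv) is the paper's argument: split at $D(z,r)$, control the off-diagonal part by \eqref{WL3}, and control the near-diagonal part by (iii) together with $\lambda_\omega(D(z,r))\lesssim 1$. The paper handles your two cases $|z|>\rho$ and $|z|\le\rho$ in one stroke by choosing $R=R_0+N_0$. With that choice, $(\mathbb{D}\setminus D(0,R))\cap D(z,R_0)$ is empty whenever $z\in D(0,N_0)$. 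So the alternatives you float (uniform integrability, finite-rank approximation) are unnecessary, and the "delicate point" you worried about never arises. Also discard your opening decomposition with the truncation to the right of $T$; the left truncation $P_\omega M_{\chi_{D(0,R)}}T$ is the one that matches the quantity in (iv).

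\textbf{Where you differ: (iv)$\Rightarrow$(i).} The paper fixes $f$ in the unit ball and writes $\langle Tf,b^\omega_{p',\zeta}\rangle$ via the reproducing formula. It then applies a weighted H\"older inequality with \eqref{WL2}, Fubini, and the pointwise identity $|\langle f,b^\omega_{p',z}\rangle|^p\,d\lambda_\omega\asymp |f|^p\omega\, dA$. This gives $\int_{\mathbb{D}\setminus D(0,R)}|Tf|^p\omega\,dA\lesssim\varepsilon$ uniformly in $f$. Compactness then comes from the external precompactness criterion \cite[Theorem 1.8]{MSWW}.

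You instead run Schur's test on the truncated kernel $\chi_{\mathbb{D}\setminus D(0,R)}(\zeta)\langle TB^\omega_z,B^\omega_\zeta\rangle_{A^2_\omega}$, using the same $h$ as in Lemma \ref{Bound}. This gives $\|T-P_\omega M_{\chi_{D(0,R)}}T\|\to 0$. You conclude because $P_\omega M_{\chi_{D(0,R)}}T$ is compact, by Montel's theorem and the boundedness of $P_\omega$ on $L^p_\omega$, which the paper already uses in Theorem \ref{seg}.

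The analytic content is identical: the (iv) quantity enters one Schur integral and \eqref{WL2} the other. Your route is self-contained and produces an explicit compact approximant. The paper's route is shorter on the page but delegates the compactness step to the cited criterion.
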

\begin{proof}
(iv) $\Rightarrow$ (i). For each $f\in A_\omega^p$ and $\|f\|_{A_\omega^p}\leq 1$,
$$
\langle Tf, b_{p',\zeta}^\omega \rangle_{A_\omega^2}
=\int_{\mathbb{D}}\langle  T b_{p,z}^\omega,b_{p',\zeta}^\omega \rangle_{A_\omega^2}\langle f, b_{p',z}^\omega\rangle_{A_\omega^2} \|B_z^\omega\|_{A_\omega^p}\|B_z^\omega\|_{A_\omega^{p'}}\omega(z)dA(z),\quad{\zeta\in\mathbb{D}}.
$$
Then by H\"{o}lder's inequality, \eqref{kernel-2} and \eqref{WL2}, we have
\begin{eqnarray}\label{equ1}
|\langle Tf, b_{p',\zeta}^\omega \rangle_{A_\omega^2}|^p
&\lesssim& \bigg( \int_{\mathbb{D}}|\langle  T b_{p,z}^\omega,b_{p',\zeta}^\omega \rangle_{A_\omega^2}||\langle f, b_{p',z}^\omega\rangle_{A_\omega^2}|d\lambda_\omega(z) \bigg)^p \nonumber\\
&\leq& \int_{\mathbb{D}}\left|\langle  T b_{p,z}^\omega,b_{p',\zeta}^\omega \rangle_{A_\omega^2}\right| \left|\langle f, b_{p',z}^\omega\rangle_{A_\omega^2}\right|^p \left(\frac{\|B_z^\omega\|_{A_\omega^2}^{\frac{2}{p}-\frac{\delta}{p}}}{\|B_\zeta^\omega\|_{A_\omega^2}^{\frac{2}{p}-\frac{\delta}{p}}}\right)^{\frac{p}{p'}}
d\lambda_\omega(z)\nonumber\\
&&
\cdot\bigg( \int_{\mathbb{D}}\left|\langle  T b_{p,z}^\omega,b_{p',\zeta}^\omega \rangle_{A_\omega^2}\right| \frac{\|B_\zeta^\omega\|_{A_\omega^2}^{\frac{2}{p}-\frac{\delta}{p}}}{\|B_z^\omega\|_{A_\omega^2}^{\frac{2}{p}-\frac{\delta}{p}}}
d\lambda_\omega(z)\bigg)^{\frac{p}{p'}}\nonumber\\
&\lesssim& \int_{\mathbb{D}}\left|\langle  T b_{p,z}^\omega,b_{p',\zeta}^\omega \rangle_{A_\omega^2}\right| \left|\langle f, b_{p',z}^\omega\rangle_{A_\omega^2}\right|^p \frac{\|B_z^\omega\|_{A_\omega^2}^{\frac{2}{p'}-\frac{\delta}{p'}}}{\|B_\zeta^\omega\|_{A_\omega^2}^{\frac{2}{p'}-\frac{\delta}{p'}}}
d\lambda_\omega(z).
\end{eqnarray}
For any given $\varepsilon>0$, there exists $R_1>0$ such that for $R>R_1$,
$$
\int_{\mathbb{D}\setminus D(0,R)}\left|\langle Tb_z^\omega,  b_{\zeta}^\omega\rangle_{A_\omega^2}\right|\frac{\|B_z^\omega\|_{A_\omega^2}^{1-\frac{\delta}{p'}}}{\|B_\zeta^\omega\|_{A_\omega^2}^{1-\frac{\delta}{p'}}} d\lambda_\omega(\zeta)<\varepsilon, \quad z\in\mathbb{D}.
$$
Combining \eqref{kernel-2}, \eqref{equ1} and Fubini's theorem, we obtain
$$
\begin{aligned}
\int_{\mathbb{D}\setminus D(0,R)}|Tf|^p\omega dA
&\asymp\int_{\mathbb{D}\setminus D(0,R)}\left|\langle Tf, b_{p',\zeta}^\omega \rangle_{A_\omega^2}\right|^p d\lambda_\omega(\zeta)\\
&\lesssim \int_{\mathbb{D}}\bigg(\int_{\mathbb{D}\setminus D(0,R)}\left|\langle  T b_{z}^\omega,b_{\zeta}^\omega \rangle_{A_\omega^2}\right|\frac{\|B_z^\omega\|_{A_\omega^2}^{1-\frac{\delta}{p'}}}{\|B_\zeta^\omega\|_{A_\omega^2}^{1-\frac{\delta}{p'}}}
d\lambda_\omega(\zeta)\bigg) \left|\langle f, b_{p',z}^\omega\rangle_{A_\omega^2}\right|^p d\lambda_\omega(z)\\
&< \varepsilon \int_{\mathbb{D}}\left|\langle f, b_{p',z}^\omega\rangle_{A_\omega^2}\right|^p d\lambda_\omega(z)\asymp\varepsilon \|f\|_{A_\omega^p}^p
\leq \varepsilon.
\end{aligned}
$$
Then we apply \cite[Theorem 1.8]{MSWW} to deduce that $\{Tf: f\in A_\omega^p, \|f\|_{A_\omega^p}\leq1\}$ is precompact, and thus $T$ is compact on $A_\omega^p$.

(iii) $\Rightarrow$ (iv). Given $\varepsilon>0$. By \eqref{WL3}, there exists sufficiently large $R_0=R_0(T,\omega,p)>0$ such that
\begin{equation}\label{equ2}
\int_{\mathbb{D}\setminus D(z,R_0)}\left|\langle Tb_z^\omega,  b_{\zeta}^\omega\rangle_{A_\omega^2}\right|\frac{\|B_z^\omega\|_{A_\omega^2}^{1-\frac{\delta}{p'}}}{\|B_\zeta^\omega\|_{A_\omega^2}^{1-\frac{\delta}{p'}}} d\lambda_\omega(\zeta)<\varepsilon,\quad z\in\mathbb{D}.
\end{equation}
Note that there exists $N_0>0$ such that
\begin{equation}\label{equ3}
\left|\langle Tb_{p,z}^\omega,b_{p',\zeta}^\omega\rangle_{A_\omega^2}\right|<\varepsilon
\end{equation}
for all $z\in \mathbb{D}\setminus D(0,N_0)$ and $\zeta\in D(z,R_0)$. Now let $R=R_0+N_0$ and write
$$
\begin{aligned}
\int_{\mathbb{D}\setminus D(0,R)}|\langle Tb_z^\omega,  b_{\zeta}^\omega\rangle_{A_\omega^2}|\frac{\|B_z^\omega\|_{A_\omega^2}^{1-\frac{\delta}{p'}}}{\|B_\zeta^\omega\|_{A_\omega^2}^{1-\frac{\delta}{p'}}} d\lambda_\omega(\zeta)
=&\int_{\mathbb{D}\setminus\big(D(0,R)\cup D(z,R_0)\big)}\left|\langle Tb_z^\omega,  b_{\zeta}^\omega\rangle_{A_\omega^2}\right|\frac{\|B_z^\omega\|_{A_\omega^2}^{1-\frac{\delta}{p'}}}{\|B_\zeta^\omega\|_{A_\omega^2}^{1-\frac{\delta}{p'}}} d\lambda_\omega(\zeta)\\
&+\int_{\big(\mathbb{D}\setminus D(0,R)\big)\cap D(z,R_0)}\left|\langle Tb_z^\omega,  b_{\zeta}^\omega\rangle_{A_\omega^2}\right|\frac{\|B_z^\omega\|_{A_\omega^2}^{1-\frac{\delta}{p'}}}{\|B_\zeta^\omega\|_{A_\omega^2}^{1-\frac{\delta}{p'}}}d\lambda_\omega(\zeta).
\end{aligned}
$$
The first term above is controlled by the left side of \eqref{equ2}. Since $\beta(z,0)>N_0$ for $\zeta\in \big(\mathbb{D}\setminus D(0,R)\big)\cap D(z,R_0)$, by combining \eqref{Dhat1}, \eqref{kernel-2} and \eqref{equ3}, we have
$$
\int_{\big(\mathbb{D}\setminus D(0,R)\big)\cap D(z,R_0)}\left|\langle Tb_z^\omega,  b_{\zeta}^\omega\rangle_{A_\omega^2}\right|\frac{\|B_z^\omega\|_{A_\omega^2}^{1-\frac{\delta}{p'}}}{\|B_\zeta^\omega\|_{A_\omega^2}^{1-\frac{\delta}{p'}}} d\lambda_\omega(\zeta)
\lesssim \varepsilon \lambda_\omega\big(D(z,R_0)\big)\lesssim \varepsilon,
$$
and hence (iv) is satisfied.

 Since $b_{p,z}^\omega$ converges weakly to $0$ in $A_\omega^p$ as $|z|\rightarrow 1^-$, we obtain (i) $\Rightarrow$ (ii).
The assertion (ii) $\Rightarrow$ (iii) $\Rightarrow$ (i) follows from Lemma \ref{equivalent} and Theorem \ref{seg}. This completes the proof.
\end{proof}

\subsection{Toeplitz operator with $\mathrm{BMO}_\omega^p$ symbol }
In 2003, Zorboska \cite{Zorboska} proved that the compactness of Toeplitz operators with $\mathrm{BMO}$ symbols in terms of the boundary behaviour of the Berezin transform. Sadeghi and Zorboska \cite{SZ} later showed that a Toeplitz operator with complex Borel measure symbol, whose total variation is a Carleson measure, is weakly localized on the Bergman space.

Based on the work \cite{SZ,Zorboska,Zorboska2}, in this subsection we characterize the compact Toeplitz operators with symbols in $\mathrm{BMO}_\omega^p$.
For $\omega\in \mathcal{D}$, it follows from \eqref{Dcheck} and \eqref{Dhat1} that there exists $r_0=r_0(\omega)>0$ such that
$$
\omega\big(D(z,r)\big)\asymp \widehat{\omega}(z)(1-|z|),\quad{z\in\mathbb{D}},
$$
for each fixed $r\geq r_0$.
For $1 \leq p<\infty$ and each fixed $r\geq r_0$, write
$$
\mathrm{MO}_{\omega, r}^p(f)(z)=\left(\frac{1}{\omega\big(D(z, r)\big)} \int_{D(z, r)}\left|f(\zeta)-\widehat{f}_{r, \omega}(z)\right|^p \omega(\zeta) \mathrm{d}
A(\zeta)\right)^{1/p},
$$
where
$$
\widehat{f}_{r, \omega}(z)=\frac{\int_{D(z, r)} f(\zeta) \omega(\zeta) \mathrm{d} A(\zeta)}{\omega\big(D(z, r)\big)}, \quad z \in \mathbb{D}.
$$
The space $\mathrm{BMO}_{\omega,  r}^p$ consists of $f \in L_\omega^p$ such that
$$
\|f\|_{\mathrm{BMO}_{\omega, r}^p}=\sup _{z \in \mathbb{D}} \Big(\mathrm{MO}_{\omega, r}^p(f)(z)\Big)<\infty.
$$
The reproducing kernel of the standard weighted Bergman space $A^2_{\alpha}$ is $K_z^\alpha(\zeta)={{(1-\bar{z}\zeta)}^{-(\alpha+2)}}$, and we denote
$
k_{z}^{\alpha,\omega}(\zeta)=K_z^\alpha(\zeta) / \|K_z^\alpha\|_{A_\omega^2}.
$
For $g\in L_\omega^1$, the Berezin-type transform is defined as
$$
B_\omega^\alpha(g)(z)=\langle gk_{z}^{\alpha,\omega}, k_{z}^{\alpha,\omega}\rangle_{L^2_\omega}, \quad{z\in\mathbb{D}}.
$$

Now we are in a position to give the following characterization concerning the compactness of the Toeplitz operator with a $\mathrm{BMO}_\omega^p$ symbol.
\begin{proposition}\label{TBMO}
Let $\omega\in \mathcal{D}$ and $1<p<\infty$. For $f\in \mathrm{BMO}_\omega^p$, if there exists $\alpha_0=\alpha_0(\omega)>0$ such that $B_\omega^\alpha(f)$ is bounded in $\mathbb{D}$ for any $\alpha>\alpha_0$, then $T_f^\omega$ is bounded on $A_\omega^p$. And furthermore, the following statements hold,
\begin{itemize}
\item[(i)]$T_f^\omega$ is weakly localized on $A_\omega^p$;
\item[(ii)]$T_f^\omega$ is compact on $A_\omega^p$ if and only if $
\lim\limits_{|z|\to 1^-}\widetilde{T}(z)=0 .
$
\end{itemize}
\end{proposition}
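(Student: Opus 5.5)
The plan is to follow the Zorboska/Sadeghi--Zorboska scheme. We decompose $f=\widehat f_{r,\omega}+(f-\widehat f_{r,\omega})$ with $r\ge r_0$. Then we show that each piece induces a Carleson-type measure whose Toeplitz operator is weakly localized. Part (ii) then follows from part (i) and Proposition \ref{WLequ}, the equivalence of (i) and (ii) there.

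\emph{Step 1: the averaged part.} Using $\omega(D(z,r))\asymp\widehat\omega(z)(1-|z|)$ and the subharmonicity of $|k_z^{\alpha,\omega}|^2$ on hyperbolic disks, I would show that $|\widehat f_{r,\omega}(z)|\lesssim B^\alpha_\omega(|f|)(z)$ up to a $\mathrm{BMO}$ correction. More precisely, I would bound the average of $|f-B_\omega^\alpha(f)(z)|$ over $D(z,r)$ by $\|f\|_{\mathrm{BMO}_{\omega,r}^p}$. The $\mathrm{BMO}$ condition also gives $|\widehat f_{r,\omega}(z)-\widehat f_{r,\omega}(\zeta)|\lesssim \|f\|_{\mathrm{BMO}}(1+\beta(z,\zeta))$. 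Since $B^\alpha_\omega(f)$ is bounded, this yields $\widehat f_{r,\omega}\in L^\infty$. Hence $T^\omega_{\widehat f_{r,\omega}}$ is bounded and lies in $\mathrm{WL}_\omega^p$ by Proposition \ref{TinWL}. I expect this to be the main obstacle. The kernel $K^\alpha_z$ is not the $\omega$-kernel, so I would use \eqref{kernel-1} with $\nu=\omega$ to estimate $\|K_z^\alpha\|_{A^2_\omega}^2\asymp(1-|z|)^{-2\alpha-2}\widehat\omega(z)(1-|z|)$ for $\alpha>\alpha_0$. Here $\alpha_0$ is chosen via \eqref{Dhat1} so that the integral in \eqref{kernel-1} is dominated by its endpoint. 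I would then show that $|k_z^{\alpha,\omega}|^2\omega\,dA$ is comparable to $\omega/\omega(D(z,r))$ on $D(z,r)$ and has a summable hyperbolic tail.

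\emph{Step 2: the oscillation part.} Set $g=f-\widehat f_{r,\omega}$ and $d\mu=|g|\omega\,dA$. By H\"older's inequality, $\mu(D(z,r))\le \omega(D(z,r))\,\mathrm{MO}^p_{\omega,r}(f)(z)+\omega(D(z,r))\sup_{D(z,r)}|\widehat f_{r,\omega}-\widehat f_{r,\omega}(\cdot)|$. The second term is controlled by the Lipschitz-type estimate of Step 1 on a fixed hyperbolic disk. Thus $\mu(D(z,r))\lesssim\|f\|_{\mathrm{BMO}}\,\omega(D(z,r))$, so $\mu$ is a Carleson measure for $A^p_\omega$.

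\emph{Step 3: weak localization of Carleson-measure Toeplitz operators.} I would mimic Proposition \ref{TinWL} with $\omega\,dA$ replaced by $\mu$. I would write
\[
\langle T^\omega_\mu b_z^\omega,b_\zeta^\omega\rangle_{A^2_\omega}=\int_{\mathbb D}\langle b_u^\omega,b_\zeta^\omega\rangle_{A_\omega^2}\langle b_z^\omega,b_u^\omega\rangle_{A_\omega^2}\|B_u^\omega\|^2_{A_\omega^2}\,d\mu(u).
\]
Then I would discretize $\mu$ via Lemma \ref{cover}: the Carleson property makes $\mu$ locally comparable to $\omega\,dA$ after replacing $|\langle b_u^\omega,\cdot\rangle|$ by its supremum over a hyperbolic disk, which is controlled by the subharmonic mean value on a slightly larger disk. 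The splitting $I_1,I_2$ together with Lemmas \ref{kb} and \ref{kc} then gives \eqref{WL1}--\eqref{WL3} for $\kappa=1-\delta/p'$ and $\kappa=1-\delta/p$, both of which lie in $(\gamma(\omega),1)$ by \eqref{delta}. The same argument handles $T^\omega_{\bar\mu}$, since the adjoint has symbol $\bar g$. This gives (i), and boundedness follows from Lemma \ref{Bound}. The case of the Dirac measure used in Proposition \ref{comWL} is a special instance of this step, which supports its feasibility.
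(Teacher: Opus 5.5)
Your route is correct in outline, but it handles the symbol differently from the paper. The paper proves nothing about $f$ directly. It takes the boundedness of $T^\omega_f$ from \cite[Theorem 1]{DGHY} and extracts from that proof that $d\mu=|f|\omega\,dA$ is a $p$-Carleson measure for $A^p_\omega$. It then asserts weak localization ``by a similar proof as in Proposition \ref{TinWL}'' and gets (ii) from Theorem \ref{main}.

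You instead derive the Carleson property yourself, Zorboska-style, from $|B^\alpha_\omega(f)-\widehat f_{r,\omega}|\lesssim\|f\|_{\mathrm{BMO}}$. (Here $\alpha_0$ is given, and you simply fix some large $\alpha>\alpha_0$.) You then recover boundedness a posteriori from Lemma \ref{Bound}, and use Proposition \ref{WLequ} instead of Theorem \ref{main}; the two are equivalent for operators in $\mathrm{WL}^p_\omega$. This makes the argument self-contained and spells out the Carleson-to-localization step that the paper leaves implicit.

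The splitting of $f$ is unnecessary. Once $\widehat f_{r,\omega}\in L^\infty$, the bound $\int_{D(z,r)}|f|\omega\,dA\le\omega(D(z,r))\big(\mathrm{MO}^p_{\omega,r}(f)(z)+|\widehat f_{r,\omega}(z)|\big)$ already shows that $|f|\omega\,dA$ is Carleson. That is exactly the paper's $\mu$.

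Two points need repair.

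First, the kernel norm is $\|K^\alpha_z\|^2_{A^2_\omega}\asymp\widehat\omega(z)(1-|z|)^{-2\alpha-3}$, not your expression. Only with this exponent do you get $|k^{\alpha,\omega}_z|^2\asymp\omega(D(z,r))^{-1}$ on $D(z,r)$. Also, absorbing the factor $1+\beta(z,\zeta)$ in the tail uses \eqref{Dcheck} as well as \eqref{Dhat1}.

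Second, in Step 3 the sub-mean-value inequality must be taken with respect to area measure, not $\omega\,dA$. A $\mathcal D$ weight may vanish on a whole sequence of annuli; this is compatible with $\widehat\omega(r)\asymp 1-r$. For such a weight, Runge's theorem shows that $\sup_{D(a,r)}|F|\lesssim\omega(D(a,2r))^{-1}\int_{D(a,2r)}|F|\omega\,dA$ fails. So the Carleson property does not make $|\mu|$ comparable to $\omega\,dA$ in the sense you need.

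The fix is to apply the area mean value to the analytic product $B^\omega_\zeta B^\omega_z$, together with $|\mu|(D(a,r))\lesssim\omega(D(a,r))\asymp\widehat\omega(a)(1-|a|)$. This gives $\int|B^\omega_\zeta B^\omega_z|\,d|\mu|\lesssim\int|B^\omega_\zeta(u)B^\omega_z(u)|\,\frac{\widehat\omega(u)}{1-|u|}\,dA(u)$. In the $u$-variable, $d\lambda_\omega$ is thus replaced by $(1-|u|)^{-2}dA(u)$.

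Lemmas \ref{kb} and \ref{kc} then do not apply verbatim. Their proofs still go through, because they depend on $\omega$ only through $\widehat\omega$. For instance, the $u$-integral in $I_1$ becomes comparable to $\|B^\omega_z\|^{\kappa-1}_{A^2_\omega}\int_{\mathbb D}|B^\omega_z|\,\eta\,dA$, with $\eta$ as in the proof of Lemma \ref{kb}. This is then handled by \eqref{kernel-1}.
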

\begin{proof}
For $f\in \mathrm{BMO}_\omega^p$, since $B_\omega^\alpha(f)$ is bounded in $\mathbb{D}$ for any $\alpha>\alpha_0$, it follows from \cite[Theorem 1]{DGHY} that $T_f^\omega$ is bounded on $A_\omega^p$. Moreover, by the proof of \cite[Theorem 1]{DGHY}, $\mu$ is a~$p$-Carleson measure for $A_\omega^p$, where $d\mu=|f| \omega dA$. Then by a similar proof as in Proposition \ref{TinWL}, we also have $T^\omega_{f}\in \mathrm{WL}_\omega^p$, and (i) is proved. Furthermore, (i) combined with Theorem \ref{main} yields the desired result in (ii).
\end{proof}

Recall that \cite[Theorem 2]{PRS} shows that a finite positive Borel measure $\mu$ is a $p$-Carleson measure for $A_\omega^p$ if and only if $\widetilde{T^\omega_{\mu}}$ is bounded on $\mathbb{D}$.
Now let $\mathrm{BT_\omega}$ be the collection of all measurable functions $f$ on $\mathbb{D}$ with $\widetilde{T^\omega_{|f|}}$ bounded.
From the proof of Proposition \ref{TBMO}, we know that $T^\omega_{f}\in \mathrm{WL}_\omega^p$ if $f\in \mathrm{BT_\omega}$.
\begin{corollary}\label{qcar}
Let $\omega\in\mathcal{D}$ and $1<p<\infty$. If $f\in\mathrm{BT_\omega}$, then $T_f^\omega$ is compact on $A_\omega^p$ if and only if $
\lim\limits_{|z|\to 1^-}\widetilde{T}(z)=0
$.
\end{corollary}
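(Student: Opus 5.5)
The plan is to reduce Corollary~\ref{qcar} to Theorem~\ref{main} by showing that $T_f^\omega\in\mathrm{WL}_\omega^p$ whenever $f\in\mathrm{BT_\omega}$, exactly as indicated after Proposition~\ref{TBMO}.

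\emph{Step 1: boundedness.} Put $d\mu=|f|\omega\,dA$. Since $\widetilde{T^\omega_{|f|}}=\widetilde{T^\omega_\mu}$ is bounded on $\mathbb{D}$, \cite[Theorem 2]{PRS} shows that $\mu$ is a $p$-Carleson measure for $A_\omega^p$, and likewise for $A_\omega^{p'}$ and $A_\omega^2$, because the condition is independent of $p$. Hence $\mu(D(u,r))\lesssim \omega(D(u,r))\asymp\widehat{\omega}(u)(1-|u|)$ for $r\ge r_0$. With this, $T_f^\omega$ is densely defined on the span of the reproducing kernels, and the pairings $\langle T_f^\omega b_z^\omega,b_\zeta^\omega\rangle_{A_\omega^2}$ make sense.

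\emph{Step 2: weak localization.} I would imitate the proof of Proposition~\ref{TinWL}. By Fubini,
$$|\langle T_f^\omega b_z^\omega,b_\zeta^\omega\rangle_{A_\omega^2}|\le\int_{\mathbb{D}}|\langle b_u^\omega,b_\zeta^\omega\rangle_{A_\omega^2}||\langle b_z^\omega,b_u^\omega\rangle_{A_\omega^2}|\,\|B_u^\omega\|_{A_\omega^2}^2\,d\mu(u),$$
and the analogous bound holds for $(T_f^\omega)^*=T_{\bar f}^\omega$, since $|\bar f|=|f|$. The one change from Proposition~\ref{TinWL} is that the measure $\|B_u^\omega\|^2_{A_\omega^2}\,d\mu(u)$ replaces $\|f\|_\infty\,d\lambda_\omega(u)$. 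So I need versions of Lemmas~\ref{kb} and~\ref{kc} in which the outer integration in $u$ is taken against $\|B_u^\omega\|^2\,d\mu(u)\asymp d\mu(u)/(\widehat\omega(u)(1-|u|))$.

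To get them, discretize. Cover $\mathbb{D}$ by a lattice of hyperbolic disks $D(u_k,r)$, and use the Carleson bound $\mu(D(u_k,r))\lesssim\lambda_\omega$-mass of $D(u_k,r)$. Combine this with the local comparability of $|\langle b_z^\omega,b_u^\omega\rangle|\,\|B_u^\omega\|^{-\kappa}$ for $u$ in a fixed hyperbolic disk. That comparability is a subharmonicity or mean-value estimate for the analytic function $u\mapsto\langle b_z^\omega,B_u^\omega\rangle$, together with \eqref{kernel-2} and \eqref{Dhat1}. The upshot is that integrals against $\|B_u^\omega\|^2\,d\mu$ of such kernels are dominated by the corresponding $d\lambda_\omega$ integrals over slightly enlarged disks.

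The decomposition $I_1+I_2$ then goes through verbatim with $\kappa=1-\delta/p'$ and $\kappa=1-\delta/p$. Both exponents lie in $(\gamma(\omega),1)$ by \eqref{delta}. This yields \eqref{WL1}, \eqref{WL2}, \eqref{WL3} and the adjoint condition.

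\emph{Step 3: conclusion.} From Step~2, $T_f^\omega\in\mathrm{WL}_\omega^p$; Lemma~\ref{Bound} then gives boundedness on $A_\omega^p$, and Theorem~\ref{main} gives the equivalence of compactness with $\widetilde{T}(z)\to0$.

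\emph{Main obstacle.} The hardest step is the pointwise-to-average transfer in Step~2. Absolute values of kernel pairings are not subharmonic in both variables simultaneously. I would handle this first by applying the sub-mean-value property to $|\langle b_z^\omega,B_u^\omega\rangle|\,|\langle B_u^\omega,b_\zeta^\omega\rangle|$ in the variable $u$. This product is the modulus of an antianalytic function of $u$ times that of an analytic one, hence $\log$-subharmonic. That would bound its value at $u$ by its $\omega$-average over $D(u,r)$, using $\omega(D(u,r))\asymp\widehat\omega(u)(1-|u|)$.
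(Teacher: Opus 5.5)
Your overall route is the paper's: boundedness of $\widetilde{T^\omega_{|f|}}$ makes $d\mu=|f|\omega\,dA$ a Carleson measure, the argument of Proposition~\ref{TinWL} then gives $T_f^\omega\in\mathrm{WL}_\omega^p$, and Theorem~\ref{main} finishes. Your Steps 1 and 3 are fine, and so is the check that $1-\delta/p,\ 1-\delta/p'\in(\gamma(\omega),1)$.

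\textbf{The gap.} The mechanism you propose for the step you call the main obstacle does not work. Log-subharmonicity of $u\mapsto|b_z^\omega(u)b_\zeta^\omega(u)|$ gives a sub-mean-value inequality for area measure on Euclidean discs centred at $u$. It does not give a bound by the $\omega$-average over $D(u,r)$. A weight in $\mathcal{D}$ is controlled only through $\widehat{\omega}$. The relation $\omega(D(u,r))\asymp\widehat\omega(u)(1-|u|)$ fixes the total mass, not where that mass sits inside $D(u,r)$.

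For example, $\omega=\chi_E$ with $E=\bigcup_{n}\{1-2^{-n}\le|z|<1-2^{-n}+2^{-n-2}\}$ lies in $\mathcal{D}$, since $\widehat\omega(r)\asymp 1-r$. Yet it vanishes on annuli of fixed hyperbolic width. For $u$ in such an annulus, Runge's theorem gives polynomials $g$ with $|g(u)|\approx 1$ that are uniformly small on $\operatorname{supp}\omega\cap\overline{D(u,r)}$. So no inequality $|g(u)|\lesssim\omega(D(u,r))^{-1}\int_{D(u,r)}|g|\,\omega\,dA$ holds for analytic $g$ in general. Likewise, two-sided local comparability of $|\langle b_z^\omega,b_u^\omega\rangle_{A^2_\omega}|$ on hyperbolic discs is an A.5-type property, which the paper notes is unavailable for general $\omega\in\mathcal{D}$.

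\textbf{The fix.} The transfer from $\mu$ to $\lambda_\omega$ holds globally and needs no discretization. For $\omega\in\widehat{\mathcal{D}}$ the Carleson condition is independent of the exponent, as you note, so $\mu$ is also a Carleson measure for $A^1_\omega$. Moreover $b_z^\omega b_\zeta^\omega\in A^1_\omega$ with norm at most $1$, by Cauchy--Schwarz. Hence, using \eqref{kernel-2},
\[
|\langle T_f^\omega b_z^\omega,b_\zeta^\omega\rangle_{A^2_\omega}|\le\int_{\mathbb{D}}|b_z^\omega b_\zeta^\omega|\,d\mu\lesssim\int_{\mathbb{D}}|b_z^\omega b_\zeta^\omega|\,\omega\,dA\asymp\int_{\mathbb{D}}|\langle b_z^\omega,b_u^\omega\rangle_{A^2_\omega}|\,|\langle b_u^\omega,b_\zeta^\omega\rangle_{A^2_\omega}|\,d\lambda_\omega(u).
\]
This is exactly the majorant used for $f\equiv 1$ in Proposition~\ref{TinWL}. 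The $I_1+I_2$ argument with Lemmas~\ref{kb} and~\ref{kc} then applies verbatim to $T_f^\omega$ and to $T_{\bar f}^\omega$.

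If you prefer a local argument, average with respect to area instead. This gives the same bound with $d\lambda_\omega(u)$ replaced by $dA(u)/(1-|u|)^2$. You would then have to redo Lemmas~\ref{kb} and~\ref{kc} for that measure. That works: the relevant radial weights become exactly the paper's $\eta$ and $\tau$.
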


\section{Two-weight Bergman projection and Toeplitz operator}
In this section, we first prove the boundedness of the Bergman projection in the two-weight setting. Next, we establish a compact extrapolation theorem adapted to our setting later. Finally, we characterize the compactness of Toeplitz operators with bounded symbols in the two-weight setting.

Our approach to the two-weight Bergman projection employs the dyadic system in \cite{APR1,PRW},  which was earlier developed in \cite{Cm,GJ,Mei}.
Define the dyadic grids
$$
\mathcal{D}^\beta=\left\{I_{j, m}^\beta: j \in \mathbb{N} \cup\{0\},\quad m \in \mathbb{N} \cup\{0\},\quad 0 \leq m \leq 2^j-1\right\}, \quad \beta \in\left\{0,\frac{1}{3}\right\},
$$
where
$$
I_{j, m}^\beta=\left\{e^{i \theta}: \theta \in\left[\frac{4 \pi(m+\beta)}{2^j}, \frac{4 \pi(m+1+\beta)}{2^j}\right)\right\} .
$$
For an arc $I\subset \mathbb{T}$, define $$S(I)=\left\{r e^{i \theta}: e^{i \theta} \in I,\quad 1-|I| \leq r<1\right\}$$ is the Carleson square associated with $I$, and $|I|$ stands for the normalized arc-length of $I$.
For~$a \in \mathbb{D} \backslash\{0\}$, define $I_a=\{e^{i \theta}:|\arg (a e^{-i \theta})| \leq \frac{1-|a|}{2}\}$ and $S(a)=S(I_a).$ Now define the dyadic operator
\[
 Q_{\nu}^\beta f(z)
 =\sum_{I\in \mathcal D^\beta}
 \chi_{S(I)}(z)\frac{\int_{S(I)}f(\zeta)\nu(\zeta) dA(\zeta)}{\nu\big(S(I)\big)},\quad \beta \in\left\{0,\frac{1}{3}\right\}.
\]
The following lemma describes the relation between the maximal Bergman projection and the dyadic operators.
\begin{lemma}\label{PQ}
Let $\nu\in \mathcal{D}$. For any non-negative function $f\in L_{\nu,\mathrm{loc}}^1$,
\[
 P_{\nu}^+f(z)\lesssim\sum_{\beta\in\{0,\frac{1}{3}\}}Q_{\nu}^\beta f(z),\quad z\in\mathbb{D}.
\]
\end{lemma}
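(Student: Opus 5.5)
The plan is to bound the kernel pointwise by a sum of dyadic Carleson-box averages. For $f\ge0$ we have $P_\nu^+f(z)=\int_{\mathbb D} f(\zeta)|B_z^\nu(\zeta)|\nu(\zeta)\,dA(\zeta)$. Since $\nu\in\mathcal D\subset\widehat{\mathcal D}$, the off-diagonal estimate \eqref{kernel-} together with \eqref{Dhat2} gives
$$
|B_z^\nu(\zeta)|\lesssim \frac{1}{\widehat{\nu}\big(1-|1-\bar z\zeta|\big)\,|1-\bar z\zeta|},\qquad z,\zeta\in\mathbb D,
$$
where we read $\widehat\nu(1-t)$ as $\widehat\nu(0)$ when $t\ge1$. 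The classical geometric fact we need is this: for any $z,\zeta\in\mathbb D$ there exist $\beta\in\{0,\frac13\}$ and an arc $I\in\mathcal D^\beta$ such that $z,\zeta\in S(I)$ and $|I|\asymp|1-\bar z\zeta|$. This holds because $|1-\bar z\zeta|\asymp (1-|z|)+(1-|\zeta|)+|\arg z-\arg\zeta|$, and the two shifted grids (the one-third trick) ensure that every arc is contained in a dyadic arc of comparable length from one of the two grids. The constants depend only on the geometry.

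Next we compare the kernel bound with the box mass. We need $\nu(S(I))\asymp |I|\,\widehat\nu(1-|I|)$. This follows by integrating the radial weight over the box, which gives $\nu(S(I))\asymp|I|\int_{1-|I|}^1\nu(s)\,ds=|I|\widehat\nu(1-|I|)$ (for large arcs one uses $r\ge0$ and that $\widehat\nu(0)$ is finite). Combined with \eqref{Dhat1}, which allows $\widehat\nu(1-c|I|)\asymp\widehat\nu(1-|I|)$ for comparable scales, this yields $|B_z^\nu(\zeta)|\lesssim 1/\nu(S(I))$ whenever $I$ is the arc chosen above for the pair $(z,\zeta)$. Since both $z$ and $\zeta$ lie in $S(I)$, we get
$$
|B_z^\nu(\zeta)|\lesssim \sum_{\beta\in\{0,\frac13\}}\sum_{I\in\mathcal D^\beta}\frac{\chi_{S(I)}(z)\chi_{S(I)}(\zeta)}{\nu(S(I))}.
$$
This is a pointwise majorization, so positivity of the remaining terms makes it valid as an upper bound. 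Multiplying by $f(\zeta)\nu(\zeta)\ge0$ and integrating in $\zeta$ using monotone convergence gives exactly $\sum_\beta Q_\nu^\beta f(z)$.

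The main obstacle is the comparison step. We must check carefully that $\nu_{2/|1-\bar z\zeta|}$ from \eqref{kernel-} is comparable to $\widehat\nu(1-|I|)$ uniformly, including the regime $|1-\bar z\zeta|$ bounded below, where $\omega_x$ with $x<1$ appears and \eqref{Dhat2} is stated only for $x\ge1$. In that regime we argue directly that $\nu_x\ge\nu_{2}>0$ is bounded below, and that the top-level boxes have $\nu(S(I))\asymp 1$ (here $|I|\asymp1$ and $\widehat\nu(0)\asymp\widehat\nu(1-|I|)$ by \eqref{Dhat1}), so the kernel is bounded and the top-generation term suffices. Only the doubling property from $\widehat{\mathcal D}$ is really needed. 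The hypothesis $\nu\in\mathcal D$ is harmless.
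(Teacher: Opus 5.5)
Your proof is correct and is essentially the paper's argument. The paper likewise combines \eqref{kernel-} with \eqref{Dhat2} and \eqref{Dhat1}. It places $z,\zeta$ in a common box $S(J)$ with $|J|\asymp|1-\bar z\zeta|$, and enlarges $J$ to a dyadic arc $L\in\mathcal{D}^\beta$ with $|L|\le 6|J|$ by the one-third trick. It then implicitly uses $\nu(S(I))\asymp|I|\,\widehat{\nu}(1-|I|)$ and integrates against $f\nu$.

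The regime you flag as the main obstacle never occurs. Since $|1-\bar z\zeta|<2$, you always have $x=2/|1-\bar z\zeta|>1$, so \eqref{Dhat2} applies directly. As you note, only $\nu\in\widehat{\mathcal{D}}$ is actually used.
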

\begin{proof}
For every $z,\zeta\in\D$, there exists an arc $J\subset\T$ such that
$z,\zeta\in S(J)$ and
$|1-\bar{z}\zeta|\asymp |J|$, see \cite{APR1}.
Furthermore, there exist $\beta \in\left\{0,\frac{1}{3}\right\}$ and $L\in \mathcal{D}^\beta$ such that $J\subset L$ and $|L|\le 6|J|$, we refer to \cite{Mei} for more details.
Therefore, \eqref{kernel-} together with \eqref{Dhat2} and \eqref{Dhat1} yields
\begin{align*}
  |B_z^{\nu}(\zeta)|
 &\lesssim
   \frac{1}{\nu_{\frac{2}{|1-\bar{z}\zeta|}}|1-\bar{z}\zeta|}
  \asymp
 \frac{1}{\widehat{\nu}(1-|J|)|J|}
 \asymp
 \frac{1}{\widehat{\nu}(1-|L|)|L|}\\
 &\lesssim\sum_{\substack{I\in \mathcal{D}^\beta\\ L\subset I}}
 \frac{\chi_{S(I)}(z)\chi_{S(I)}(\zeta)}
 {\widehat{\nu}(1-|I|)|I|}
 \leq \sum_{\beta\in\{0,\frac{1}{3}\}}\sum_{I\in \mathcal{D}^\beta}
 \frac{\chi_{S(I)}(z)\chi_{S(I)}(\zeta)}
 {\widehat{\nu}(1-|I|)|I|},\quad{z,\zeta\in \D},
\end{align*}
which completes the proof.
\end{proof}
Recall that for a positive Borel measure $\mu$ and a dyadic grid $\mathcal{D}^\beta$ on $\mathbb{T}$, the dyadic weighted H\"{o}rmander type maximal function is defined by
$$
M_{\mu,\beta}(f)(z)=\sup _{\substack {I \in \mathcal{D}^\beta\\ z\in S(I)}} \frac{1}{\mu\big(S(I)\big)} \int_{S(I)}|f(\zeta)| \mu(\zeta)dA(\zeta),\quad z\in\mathbb{D}, \quad \beta\in\left\{0,\frac{1}{3}\right\}.
$$

\begin{theorem}\label{BQ}
Let $\nu\in \mathcal{D}$. If $\omega\in B_p(\nu$),
then
$$
\|Q_{\nu}^\beta f\|_{L^p_{\omega\nu} \rightarrow L^p_{\omega\nu}} dA\lesssim [\omega]_{B_p(\nu)}^{\max \{1,\frac{1}{p-1}\}}\|f\|_{L^p_{\omega\nu}},
\quad \beta\in\left\{0,\frac{1}{3}\right\}.
$$
\end{theorem}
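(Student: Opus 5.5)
We will prove the bound by duality combined with a sparse/dyadic weighted argument, in the spirit of the sharp $A_2$ proof for positive dyadic operators (as in \cite{PottRe} and \cite{APR1}). The roles are fixed as follows. The underlying measure is $d\mu=\nu\,dA$. The target weight is $w=\omega$, so that $\omega\nu\,dA=\omega\,d\mu$. The dual weight is $\sigma=\omega^{-p'/p}=\omega^{1-p'}$, so that $d\sigma_\mu=\omega^{1-p'}\nu\,dA$. For $g\in L^{p'}_{\omega\nu}$ we will estimate
$$
\int_{\mathbb{D}} Q_\nu^\beta f\,\cdot g\,\omega\nu\,dA=\sum_{I\in\mathcal{D}^\beta}\frac{1}{\nu(S(I))}\int_{S(I)}f\nu\,dA\int_{S(I)}g\omega\nu\,dA.
$$
The substitution $f=\phi\,\omega^{1-p'}$ gives $\|f\|_{L^p_{\omega\nu}}=\|\phi\|_{L^p(\sigma_\mu)}$. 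The sum then becomes
$$
\sum_I \frac{\sigma_\mu(S(I))\,\omega_\mu(S(I))}{\nu(S(I))}\,\langle \phi\rangle^{\sigma_\mu}_{S(I)}\,\langle g\rangle^{\omega_\mu}_{S(I)},
$$
where $\omega_\mu=\omega\nu\,dA$ and $\langle\cdot\rangle^{\eta}_{S}$ denotes the $\eta$-average over $S$. Since the Carleson squares $S(I)$, $I\in\mathcal{D}^\beta$, form a nested tree (two of them are either disjoint or one contains the other), this is exactly a positive dyadic form on a tree. The tree structure is the only geometric input we need, so the argument is identical to the Euclidean one with $\nu\,dA$ in place of Lebesgue measure. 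One point to check is that $\nu(S(I))>0$, which holds because $\widehat{\nu}>0$.

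\textbf{Step 1 (sparse domination by stopping time).} We run a stopping-time construction on the tree $\{S(I)\}$ with respect to $\nu$. The goal is a sparse subfamily $\mathcal{S}$ with pairwise disjoint sets $E(S)\subset S$ satisfying $\nu(E(S))\ge\frac12\nu(S)$, such that the form is dominated by the corresponding sum over $\mathcal{S}$. This can be done using the principal-cubes construction for $\nu$-averages of $f$ (stop when the $\nu$-average doubles). An alternative is to avoid sparseness and directly use the Carleson embedding on the tree. The property $\nu\in\mathcal{D}$ is used only to guarantee that the children of $S(I)$ in $\mathcal{D}^\beta$ have comparable $\nu$-measure, via \eqref{Dhat1} and \eqref{Dcheck}. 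This comparability keeps the stopping children under control, and the same point reappears below.

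\textbf{Step 2 (weighted estimate).} For $p=2$ we write
$$
\frac{\sigma_\mu(S)\omega_\mu(S)}{\nu(S)}\le[\omega]_{B_2(\nu)}\,\nu(S)\le 2[\omega]_{B_2(\nu)}\nu(E(S)).
$$
We then bound $\langle\phi\rangle^{\sigma_\mu}_S\langle g\rangle^{\omega_\mu}_S$ pointwise on $E(S)$ by $M_{\sigma_\mu,\beta}\phi\cdot M_{\omega_\mu,\beta}g$. To handle the mismatch in measures we use $\nu(E(S))\le \sigma_\mu(E(S))^{1/2}\omega_\mu(E(S))^{1/2}$, which follows from Cauchy--Schwarz since $\sigma^{1/2}\omega^{1/2}=1$ when $p=2$. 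Cauchy--Schwarz over the disjoint sets $E(S)$, followed by the boundedness of the dyadic maximal functions $M_{\sigma_\mu,\beta}$ and $M_{\omega_\mu,\beta}$ on $L^2(\sigma_\mu)$ and $L^2(\omega_\mu)$ (Doob's inequality on a tree holds for arbitrary measures), gives the bound $[\omega]_{B_2(\nu)}\|\phi\|\|g\|$. For general $p$ there are two options: repeat the computation with the $p$-version of the $B_p$ splitting, or invoke sharp extrapolation. Both yield the exponent $\max\{1,\frac1{p-1}\}$.

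\textbf{Main obstacle.} The delicate step is the general-$p$ sparse estimate, in particular distributing $[\omega]_{B_p(\nu)}$ across the two averages when $p\neq2$. We would first try the Lacey--Sawyer--Uriarte-Tuero-style approach: split the sparse family according to the size of $\frac{\sigma_\mu(S)\omega_\mu(S)^{p-1}}{\nu(S)^p}$. If this becomes messy, we would instead use the Moen/Cruz-Uribe duality trick, where $\frac{1}{\nu(S)}=\frac{\omega_\mu(S)^{1/p}\sigma_\mu(S)^{1/p'}}{\nu(S)}\cdot\omega_\mu(S)^{-1/p}\sigma_\mu(S)^{-1/p'}$. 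This reduces the problem to the two weighted Carleson embeddings and the maximal function bounds.
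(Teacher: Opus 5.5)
Your Step 2 is essentially the paper's $p=2$ argument: pull out $[\omega]_{B_2(\nu)}$, move the $\nu$-mass of each square onto a piece of bounded overlap, dominate the two averages by $M_{\omega^{-1}\nu,\beta}$ and $M_{\omega\nu,\beta}$, apply Cauchy--Schwarz, then extrapolate and dualize. Step 1, however, has a genuine gap, and it sits exactly where the hypothesis on $\nu$ is needed.

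The form $\sum_{I\in\mathcal{D}^\beta}\nu(S(I))\langle f\rangle^{\nu}_{S(I)}\langle g\omega\rangle^{\nu}_{S(I)}$ runs over \emph{all} dyadic Carleson squares. A principal-cubes stopping time on the $\nu$-averages of $f$ cannot reduce it to a sparse sum. Take $f$ and $g\omega$ both equal to $\chi_{S(J)}$. Then nothing stops inside $J$, and the terms with $I\subseteq J$ add up to $\sum_{I\subseteq J}\nu(S(I))$, whereas any sparse sum is $\lesssim\nu(S(J))$ on these functions. So the domination you want forces the packing condition $\sum_{I\subseteq J}\nu(S(I))\lesssim\nu(S(J))$, which no stopping argument can supply.

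This condition does not come from the tree structure; in the Euclidean analogue the full dyadic sum is infinite. Nor does it come from comparability of children: siblings have equal $\nu$-measure by radiality, and parent/child comparability only uses $\widehat{\mathcal{D}}$. It is precisely the reverse doubling $\nu\in\widecheck{\mathcal{D}}$. Since $\nu(S(I))\asymp|I|\,\widehat{\nu}(1-|I|)$, \eqref{Dcheck} gives $\sum_{I\subseteq J}\nu(S(I))\asymp|J|\sum_{k\ge0}\widehat{\nu}(1-2^{-k}|J|)\lesssim\nu(S(J))$. The conclusion really fails without it: for $\nu(r)=(1-r)^{-1}(\log\frac{e}{1-r})^{-2}\in\widehat{\mathcal{D}}\setminus\widecheck{\mathcal{D}}$ and $\omega\equiv1$, one has $Q^\beta_\nu 1(z)\gtrsim\log\frac{1}{1-|z|}$, which is not in $L^2_\nu$. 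So your claim that the tree is the only geometric input, and that $\nu\in\mathcal{D}$ only serves to compare children, misses the key ingredient.

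The repair is to drop the stopping time altogether. The whole family $\{S(I)\}_{I\in\mathcal{D}^\beta}$ is already sparse with respect to $\nu$, and this is what the paper uses. It replaces $\nu(S(I))$ by the $\nu$-measure of the top part $T(I)$ of the square, these tops being pairwise disjoint, and then runs what is your Step 2. For a general $\nu\in\mathcal{D}$ the safe choice is the $K$-top $T_K(I)$ with $K$ from \eqref{Ddef}. Then $\nu(T_K(I))\gtrsim\nu(S(I))$, and the sets $T_K(I)$ overlap at most a number of times depending only on $K$. That is all your Cauchy--Schwarz and maximal-function step needs in place of disjoint sets $E(S)$.

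Alternatively, the packing estimate above yields pairwise disjoint $E(I)\subseteq S(I)$ with $\nu(E(I))\gtrsim\nu(S(I))$, by the usual Carleson-implies-sparse argument, which works for the non-atomic measure $\nu\,dA$. With this in place, the passage to general $p$ by extrapolation and duality proceeds as in the paper.
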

\begin{proof}
We first prove the case $p=2$.
Note that $$\|Q_{\nu}^\beta\|_{L^2_{\omega\nu}\rightarrow L^2_{\omega\nu}}\asymp\|Q_{\nu}^\beta(\omega^{-1}\cdot)\|_{L^2_{\omega^{-1}\nu}
 \rightarrow L^2_{\omega\nu}}.$$
 For a non-negative function $f\in L^2_{\omega^{-1}\nu}$, by duality, we have
$$
 \|Q_{\nu}^\beta(\omega^{-1}f)\|_{L^2_{\omega\nu}}
 =\sup_{\substack{g\ge0\\ \|g\|_{L^2_{\omega\nu}}\le1}}
 \int_\D Q_{\nu}^\beta(\omega^{-1}f) g\omega\nu dA.
$$
\cite[Lemma A]{PRW} implies that both $M_{\omega\nu, \beta}: L_{\omega\nu}^2\rightarrow L_{\omega\nu}^2$ and $M_{\omega^{-1}\nu, \beta}: L_{\omega^{-1}\nu}^2\rightarrow L_{\omega^{-1}\nu}^2$ are bounded.
This combined with H\"older's inequality shows that
$$
\begin{aligned}
 &\int_\D Q_{\nu}^\beta(\omega^{-1}f) g \omega\nu dA\\
 &= \sum_{I\in\mathcal D^\beta}\nu\big(S(I)\big)
 \bigg(\frac{\int_{S(I)}f\omega^{-1}\nu dA}{\int_{S(I)}\omega^{-1}\nu dA}\bigg)
 \bigg(\frac{\int_{S(I)}g\omega\nu dA}{\int_{S(I)}\omega\nu dA}\bigg) \frac{\int_{S(I)}\omega\nu dA \int_{S(I)}\omega^{-1}\nu dA}{\nu\big(S(I)\big)^2}\\
 &\lesssim [\omega]_{B_2(\nu)}
 \sum_{I\in \mathcal{D}^\beta}\nu\big(T(I)\big)
  \bigg(\frac{\int_{S(I)}f\omega^{-1}\nu dA}{\int_{S(I)}\omega^{-1}\nu dA} \bigg)
  \bigg(\frac{\int_{S(I)}g\omega\nu dA}{\int_{S(I)}\omega\nu dA} \bigg)\\
 &\leq[\omega]_{B_2(\nu)}\int_\D
 \left(M_{\omega^{-1}\nu,\beta}(f) \omega^{-\frac{1}{2}}\right)
 \left(M_{\omega\nu,\beta}(g) \omega^{\frac{1}{2}}\right)\nu dA\\
 &\lesssim [\omega]_{B_2(\nu)}
 \|f\|_{L^2_{\omega^{-1}\nu}}
 \|g\|_{L^2_{\omega\nu}},
 \end{aligned}
$$
where $T(I)
=\{r e^{i \theta}: e^{i \theta} \in I,\quad 1-|I| \leqslant r<1-\frac{|I|}{2}\}.$
This estimate also holds when $2<p<\infty$ by an extrapolation argument similar to that used in \cite[Proposition 4.4]{PottRe}, while the case $1<p<2$ follows by duality.
\end{proof}
To prove the necessity of the condition $\omega \in B_{p}(\nu)$ for the boundedness of $P_\nu: L^p_{\omega\nu} \rightarrow L^{p, \infty}_{\omega\nu}$, we first provide the following local lower bound kernel estimate based on \cite[Lemma 7]{PRS} and its proof.
For $\nu\in\widehat{\mathcal{D}}$, there exists $\delta(\nu)\in (0,1)$ such that
\begin{equation}\label{BP1}
 \operatorname{Re}B_a^{\nu}(z)\gtrsim \frac{1}{\nu\big(S(a)\big)},\quad z\in S(a_\delta),\quad a\in \D,
\end{equation}
where $a_\delta=(1-\delta(1-|a|))e^{i\arg a}.$
Let $K$ be the constant appearing in \eqref{Ddef}. For an arc $|J|\leq \frac{\delta}{2K}$, set
$h=\frac{2K}{\delta}|J|,$ and we denote
\[
 T_K^J=\left\{re^{i\theta}:e^{i\theta}\in J,\quad 1-h\le r\le 1-\frac{h}{K}\right\}.
\]
Observe that $S(J)\subset S(a_\delta)$ when $a\in T_K^J$. Indeed, if $a\in T_K^J$, then $\delta(1-|a|)\ge 2|J|.$
For any $z\in S(J)$, we have $|z|\ge1-|J|\geq |a_\delta|.$
Moreover, since $e^{i\arg a}\in J$ and $\arg a_\delta=\arg a$, we have
$|\arg z-\arg a_\delta|\le |J|<\frac{1-|a_\delta|}{2}.$
It follows that $z\in S(a_\delta)$, and hence $S(J)\subset S(a_\delta)$.
This together with \eqref{BP1} shows that
\begin{equation}\label{BP3}
 \operatorname{Re}B_a^{\nu}(z)\gtrsim \frac1{\nu\big(S(a)\big)},\quad a\in T_K^J,\quad z\in S(J).
\end{equation}
Now we are ready to prove Theorem \ref{main1}.
\vspace{8pt}

\noindent \textbf{Proof of Theorem \ref{main1}.}
Assume (iii).
Let $K$ and $\delta$ be as in \eqref{Ddef} and \eqref{BP1}, respectively.
For each arc $J\subset \T$ with $|J|\leq \frac{\delta}{2K}$, the estimate \eqref{BP3} together with \eqref{Dhat1}  implies that there exists $C_1=C_1(\nu)>0$ such that
\begin{equation}\label{LP1}
 |P_{\nu}f(a)|\ge \operatorname{Re}P_{\nu}f(a)
 \geq  \frac{C_1}{\nu\big(S(J)\big)}\int_{S(J)}f\nu dA, \quad a\in T_K^J,
\end{equation} for all non-negative functions $f$ supported on $S(J)$.
Moreover, by \eqref{BP3} and \eqref{Ddef}, there exists $C_2=C_2(\nu)>0$ such that
\begin{equation}\label{LP2}
 |P_{\nu}g(a)|
 \ge \operatorname{Re}P_{\nu}g(a)
 \geq  \frac {C_2}{\nu(T_K^J)}\int_{T_K^J} g\nu dA, \quad a\in S(J),
\end{equation}
for all non-negative functions $g$ supported on $T_K^J$. Moreover, there exists $C=C(\omega,\nu,p)>0$ such that
\[
 \lambda^p \omega\nu\big(\{a\in\mathbb{D}:|P_{\nu}f(a)|>\lambda\}\big)
 \le C\|f\|_{L_{\omega\nu}^p}^p,
\]
for each $\lambda>0$.
Choose
\[
 \lambda_1=\frac{C_1}{\nu\big(S(J)\big)}\int_{S(J)}
 \min\{n,\omega^{-\frac{p'}{p}} \}\nu dA,
 \qquad
 f=\chi_{S(J)}\min\{n,\omega^{-\frac{p'}{p}} \}, \quad n\in \mathbb{N}.
\]Furthermore, \eqref{LP1} yields
\[
 T_K^J\subset\left\{a\in\mathbb{D}:|P_{\nu}f(a)|\geq C_1
 \frac{\int_{S(J)}f\nu dA}{\nu\big(S(J)\big)}\right\}.
\]
It follows that
\[
 \frac{\left(\int_{S(J)}
 \min\{n,\omega^{-\frac{p'}{p}} \}\nu dA\right)^{p-1}(\omega\nu)(T_K^J)}{\nu\big(S(J)\big)^p}\,
 \leq\frac{C}{C_1^p}.
\]
Similarly, choose
\[
 \lambda_2=\frac{C_2}{\nu(T_K^J)}\int_{T_K^J}
 \min\{n,\omega^{-\frac{p'}{p}} \}\nu dA,
 \qquad
 g=\chi_{T_K^J}\min\{n,\omega^{-\frac{p'}{p}} \},\quad n\in \mathbb{N}.
\]
By \eqref{LP2}, we have
\[
 S(J)\subset\left\{a\in\D:|P_{\nu}g(a)|\geq C_2
 \frac{\int_{T_K^J}g\nu dA}{\nu(T_K^J)}\right\}.
\]
Then
\[
 \frac{\left(\int_{T_K^J}
 \min\{n,\omega^{-\frac{p'}{p}} \}\nu dA\right)^{p-1}(\omega\nu)\big(S(J)\big)}{\nu(T_K^J)^p}
 \leq \frac{C}{C_2^{p}}.
\]
Hence $\omega\in L_\nu^1$.
Letting $n\to\infty$ and applying Fatou's lemma gives
\[
 \frac{\left(\int_{S(J)}\omega^{-\frac{p'}{p}} \nu dA\right)^{p-1}(\omega\nu)(T_K^J)}{\nu\big(S(J)\big)^p}
 \frac{\left(\int_{T_K^J}\omega^{-\frac{p'}{p}} \nu dA\right)^{p-1}(\omega\nu)\big(S(J)\big)}{\nu(T_K^J)^p}
 \leq \frac{C^2}{C_1^{p}C_2^{p}}.
\]
By H\"older's inequality, we obtain
\[
 \frac{(\omega\nu)(T_K^J)\left(\int_{T_K^J}\omega^{-\frac{p'}{p}} \nu dA\right)^{p-1}}{\nu(T_K^J)^p}\ge1.
\]
Therefore
\[
 \frac{(\omega\nu)\big(S(J)\big) \left(\int_{S(J)}\omega^{-\frac{p'}{p}} \nu dA\right)^{p-1}}{\nu\big(S(J)\big)^p}
\]
is uniform bounded over all Carleson squares $S(J)$ with $|J|\leq \frac{\delta}{2K}$. Thus $\omega\in B_p(\nu)$, which yields (iv).
The implication (i) $\Rightarrow$ (ii) $\Rightarrow$ (iii) is straightforward, while (iv) $\Rightarrow$ (i) follows from Lemma \ref{PQ} and Theorem \ref{BQ}.
\qed

\vspace{8pt}
Now we proceed to consider the compactness of Toeplitz operators with bounded symbols in the two-weight setting.
To this end, we recall several classes of non-radial weights, see \cite{KR,PR} for more details.
We write $\omega \in \widehat{\mathcal{D}}(\mathbb{D})$ if there exists $C=C(\omega)>0$ such that
$$
\omega\big(S(a)\big) \leqslant C \omega\left(S\bigg(\frac{1+|a|}{2} e^{i \arg a}\bigg)\right), \quad a \in \mathbb{D} \backslash\{0\} .
$$
The $K$-top of a Carleson box $S(a)$ is
$$
T_K(a)=\bigg\{r e^{i t}: e^{i t} \in I_a,\quad |a| \leqslant r<1-\frac{1-|a|}{K}\bigg\}.
$$
For an arc $I\subset \mathbb{T}$, we write $T_K(I)=\{r e^{i t}: e^{i t} \in I,\quad 1-|I| \leqslant r<1-\frac{|I|}{K}\}$.
Further, a weight~$\omega$ on $\mathbb{D}$ belongs to $\widecheck{\mathcal{D}}(\mathbb{D})$ if there exist $K=K(\omega)>1$ and $C=C(\omega)>1$ such that
$$
\omega\big(S(a)\big) \leqslant C \omega\big(T_K (a)\big), \quad a \in \mathbb{D} \backslash\{0\} .
$$
Denote $\mathcal{D}(\mathbb{D})=\widehat{\mathcal{D}}(\mathbb{D})\cap\widecheck{\mathcal{D}}(\mathbb{D}).$

For a weight $\nu$, let
$$B_\infty(\nu)=\bigcup\limits_{1<p<\infty} \{\omega: \omega\nu^{-1} \in B_p(\nu)\}.$$
We say that a weight $\omega$ has Kerman-Torchinsky $KT(\nu)$-property if there exist constants $\delta=\delta(\nu)\in(0,1)$ and $C=C(\nu)>0$ such that
$$\label{KT}
\frac{\nu(E)}{\nu(S)} \leq C \bigg( \frac{\omega(E)}{\omega(S)} \bigg)^\delta
$$
for all Carleson squares $S$ and measurable sets $E\subset S$.
We recall the following result, see \cite[Theorem 3.1(c)]{DMO} or \cite[Proposition A]{PR}.
\begin{proposition}\label{KTnu}
Let $\nu$ be a weight on $\mathbb{D}$. Then a weight $\omega$ belongs to $B_\infty(\nu)$ if and only if it has $KT(\nu)$-property.
\end{proposition}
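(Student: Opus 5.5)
The plan is to work with the density $\sigma=\omega\nu^{-1}$ throughout. Then $\omega(E)=\int_E\sigma\,\nu\,dA=(\sigma\nu)(E)$, and $\omega\in B_\infty(\nu)$ means exactly that $\sigma\in B_p(\nu)$ for some $1<p<\infty$. The statement is then the usual equivalence between $A_\infty$ and the Kerman--Torchinsky property, transplanted to Carleson squares with the measure $\nu\,dA$ playing the role of Lebesgue measure. I will prove the two implications separately.

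\emph{Necessity.} Assume $\sigma\in B_p(\nu)$. Fix a Carleson square $S$ and a measurable set $E\subset S$. By H\"older's inequality with exponents $p,p'$ applied to $\nu$,
$$\nu(E)=\int_E \sigma^{\frac1p}\sigma^{-\frac1p}\nu\,dA\le \Big(\int_E\sigma\nu\,dA\Big)^{\frac1p}\Big(\int_S\sigma^{-\frac{p'}{p}}\nu\,dA\Big)^{\frac1{p'}}.$$
Substituting the $B_p(\nu)$ bound $\big(\int_S\sigma^{-p'/p}\nu\,dA\big)^{p-1}\le[\sigma]_{B_p(\nu)}\,\nu(S)^p/\omega(S)$ gives
$$\frac{\nu(E)}{\nu(S)}\le[\sigma]_{B_p(\nu)}^{\frac1p}\Big(\frac{\omega(E)}{\omega(S)}\Big)^{\frac1p}.$$
This is the $KT(\nu)$-property with $\delta=1/p$. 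This direction is routine.

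\emph{Sufficiency.} This is the main obstacle, since it requires a reverse H\"older (or self-improvement) argument adapted to Carleson squares and $\nu$. I would proceed as follows.

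First, apply $KT(\nu)$ to $E=S\setminus F$ to obtain its contrapositive form: for every $\eta>0$ there is $\epsilon>0$ such that $\omega(F)\le\epsilon\,\omega(S)$ implies $\nu(F)\le\eta\,\nu(S)$. Choosing $\epsilon$ small enough that $C\epsilon^\delta\le\eta$ gives this directly.

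Second, I need $\nu$-doubling along the dyadic tree in order to run a Calder\'on--Zygmund decomposition. For $\nu\in\mathcal D$ this should follow from the radial doubling estimates, and I also get the symmetric statement for $\omega$ from $KT(\nu)$ itself. I would first test $KT(\nu)$ with $E$ a dyadic child square of $S$; this shows $\omega(S_{\rm child})\gtrsim\omega(S)$, which is exactly dyadic doubling of $\omega$.

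Third, with this in hand I will run a stopping-time decomposition over the dyadic grids $\mathcal D^\beta$ introduced earlier (the tree structure as in \cite{APR2}) applied to the density $\sigma^{-1}$ with respect to the measure $\omega$. The goal is a weighted Gehring lemma: show that $\sigma^{-1}$ satisfies a reverse H\"older inequality with respect to $\omega$, namely
$$\Big(\frac{1}{\omega(S)}\int_S\sigma^{-1-\varepsilon}\omega\,dA\Big)^{\frac1{1+\varepsilon}}\lesssim \frac{\nu(S)}{\omega(S)}.$$
Combined with Jensen's inequality, this yields $\sigma\in B_p(\nu)$ with $p-1=1/\varepsilon$, since $\sigma^{-p'/p}\nu=\sigma^{-p'}\omega$.

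Finally, to handle non-dyadic Carleson squares, I will cover each one by boundedly many dyadic squares from the two grids using the $1/3$-shift, as in the proof of Lemma~\ref{PQ}. Alternatively, I will simply cite \cite[Theorem 3.1(c)]{DMO}, whose abstract doubling-metric-measure framework covers this setting once the doubling step above is verified.
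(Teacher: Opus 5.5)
Your necessity argument is correct: H\"older's inequality together with the $B_p(\nu)$ bound gives $KT(\nu)$ with $\delta=1/p$. This is also the only direction the paper actually uses, namely to get $B_\infty(\nu)\subset\widehat{\mathcal{D}}(\mathbb{D})$ and $\vartheta\in\mathcal{D}(\mathbb{D})$. The paper itself gives no proof of the proposition; it just cites \cite[Theorem 3.1(c)]{DMO} and \cite[Proposition A]{PR}.

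Your sufficiency part has a genuine gap. First, you import $\nu\in\mathcal{D}$ to get doubling of $\nu$, and hence dyadic doubling of $\omega$. The proposition, however, is stated for an arbitrary weight $\nu$, so at best you would prove a special case. Your final passage from dyadic to arbitrary Carleson squares would also need doubling of $\nu$.

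More seriously, the stopping-time/Gehring iteration you sketch does not close on the tree of Carleson squares. Off the maximal squares $S(L)$ with $\nu(S(L))>\lambda\,\omega(S(L))$, you only know $\nu(S(J))\le\lambda\,\omega(S(J))$ for the surviving arcs $J$. That gives no pointwise bound $\sigma^{-1}\lesssim\lambda$ on the top parts $T(J)$. The reason is that every Carleson square containing an interior point $z$ has size at least $1-|z|$, so there is no differentiation theorem for this basis. This is exactly why the paper's own reverse H\"older result, Lemma~\ref{invarRH}, must assume the weight is essentially constant on hyperbolic disks.

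The missing idea is that no decomposition is needed: apply $KT(\nu)$ directly to level sets. Let $g=\nu/\omega$. Taking $E=\{\omega=0\}\cap S$ in $KT(\nu)$ shows that $\nu$ vanishes there, so $\nu(E)=\int_E g\,\omega\,dA$ for $E\subset S$. Set $E_\lambda=\{z\in S: g(z)>\lambda\}$. Then $\lambda\,\omega(E_\lambda)\le\nu(E_\lambda)\le C\nu(S)\bigl(\omega(E_\lambda)/\omega(S)\bigr)^{\delta}$, that is,
\[
\frac{\omega(E_\lambda)}{\omega(S)}\le\Bigl(\frac{C\,\nu(S)}{\lambda\,\omega(S)}\Bigr)^{\frac{1}{1-\delta}},\qquad \lambda>0.
\]
Integrate the distribution function, splitting at $\lambda_0=\nu(S)/\omega(S)$. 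This gives $\int_S g^{s}\,\omega\,dA\lesssim\omega(S)^{1-s}\nu(S)^{s}$ for every $1<s<\frac{1}{1-\delta}$, which is precisely the reverse H\"older inequality you were aiming for.

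Now choose $s=p'$ with $p>1/\delta$. Using $g^{p'}\omega=\sigma^{-p'/p}\nu$ and $(p'-1)(p-1)=1$, you get
\[
\frac{\omega(S)}{\nu(S)}\Bigl(\frac{1}{\nu(S)}\int_S\sigma^{-\frac{p'}{p}}\nu\,dA\Bigr)^{p-1}\lesssim 1
\]
for every Carleson square $S$, so $\omega\in B_\infty(\nu)$. This argument uses no doubling, no dyadic grids and no covering, which is why the equivalence holds for an arbitrary weight $\nu$. In particular, if you fall back on citing \cite{DMO} as the paper does, there is no doubling step to verify.
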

By Proposition \ref{KTnu}, it follows that $$\bigcup\limits_{\nu\in \widehat{\mathcal{D}}} B_\infty(\nu)\subset \widehat{\mathcal{D}}(\mathbb{D}) \quad \mathrm{and} \quad \bigcup\limits_{\nu\in \mathcal{D}}B_\infty(\nu)\subset \mathcal{D}(\mathbb{D}).$$
For a weight $\nu$, set $\vartheta=\vartheta_{\omega,\nu}=\big(\omega\nu^{-1}\big)^{-\frac{p'}{p}}\nu=\omega^{-\frac{p'}{p}}\nu^{\frac{p}{p-1}}$. If $\omega\nu^{-1}\in B_p(\nu)$, then by \eqref{Bp}, we have
$$
\sup_S \frac{\omega(S)}{\nu(S)} \bigg(\frac{\vartheta(S) }{\nu(S)}\bigg)^{p-1}<\infty,
$$
from which we know that $\omega\nu^{-1}\in B_p(\nu)$ is equivalent to $\vartheta\nu^{-1}\in B_{p'}(\nu)$. Furthermore, if $\omega\nu^{-1}\in B_p(\nu)$, then for any $f \in L_\omega^p$, H\"{o}lder's inequality yields
$$
\|f\|_{L_\nu^1} \leq \|f\|_{L_\omega^p} \left( \int_{\mathbb{D}} \omega^{-\frac{p'}{p}} \nu^{p'}  dA \right)^{\frac{1}{p'}} = \|f\|_{L_\omega^p} \, \vartheta(\mathbb{D})^{\frac{1}{p'}},
$$
which implies $L_\omega^p\subset L_\nu^1$.

\cite[Chapter 2]{Zhu2} provides an elementary introduction to the basic theory of complex interpolation, including the interpolation of
$L^p$ spaces. Taking this theory as known, next we prove a complex interpolation theorem on the weighted Bergman spaces, following the overall structure of the proof in \cite[Theorem 5.1]{AP}.
\begin{proposition}\label{inter1}
Let $\nu\in \mathcal{D}$, $\omega_0$, $\omega_1$ be weights, $p_0,p_1\in(1,\infty)$ and $\theta \in(0,1)$. If $\omega_0\nu^{-1}\in B_{p_0}(\nu)$ and $\omega_1\nu^{-1}\in B_{p_1}(\nu)$, then
$$
[A_{\omega_0}^{p_0},A_{\omega_1}^{p_1}]_{\theta}=A_{\omega}^p
$$
with equivalent norms, where
$$
\frac{1}{p}=\frac{\theta}{p_0}+\frac{1-\theta}{p_1} \quad and \quad \omega^{\frac{1}{p}}=\omega_0^{\frac{\theta}{p_0}} \omega_1^{\frac{1-\theta}{p_1}}.
$$
\end{proposition}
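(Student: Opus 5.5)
The plan is to realize $A_\omega^p$ as a complemented subspace of $L_\omega^p$ and then transfer the Stein--Weiss interpolation of weighted Lebesgue spaces through the projection $P_\nu$, in the spirit of \cite[Theorem 5.1]{AP}.

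\emph{Step 1: change of measure.} Set $u_k=\omega_k\nu^{-1}$ for $k=0,1$, so that $u_k\in B_{p_k}(\nu)$ and $L^{p_k}_{\omega_k}=L^{p_k}_{u_k\nu}$. Theorem~\ref{main1}, (iv)$\Rightarrow$(ii), shows that $P_\nu$ is bounded on $L^{p_k}_{\omega_k}$ for $k=0,1$.

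\emph{Step 2: $P_\nu$ is a projection onto $A^{p_k}_{\omega_k}$.} We need $P_\nu f=f$ for $f\in A^{p_k}_{\omega_k}$. Since $L^{p_k}_{\omega_k}\subset L^1_\nu$ (shown just above), $P_\nu f$ is well defined. To verify the reproducing formula, approximate $f$ by the dilations $f_r(z)=f(rz)$. For these, $P_\nu f_r=f_r$ holds because $f_r\in H^\infty\subset A^2_\nu$. It remains to show $f_r\to f$ in $L^{p_k}_{\omega_k}$, or at least that $P_\nu f_r\to P_\nu f$ pointwise; the pointwise statement follows from $L^1_\nu$-convergence, since $B_z^\nu$ is bounded for fixed $z$. \textbf{This is the main obstacle}, because $\omega_k$ is non-radial, so dilation is not obviously bounded on $L^{p_k}_{\omega_k}$. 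I would first try to obtain $L^1_\nu$-convergence of $f_r$ directly from $f\in L^1_\nu$ and radiality of $\nu$; subharmonicity of $|f|$ over circles gives monotonicity of the integral means, and dominated convergence then applies. This shows $P_\nu f=f$. Consequently $A^{p_k}_{\omega_k}=P_\nu(L^{p_k}_{\omega_k})$ is a complemented subspace, and the same holds for $A^p_\omega$: by H\"older's inequality the interpolated weight satisfies $\omega\nu^{-1}=u_0^{\theta p/p_0}u_1^{(1-\theta)p/p_1}\in B_p(\nu)$, and it can also be obtained a posteriori from the boundedness of $P_\nu$ on $L^p_\omega$ via interpolation together with Theorem~\ref{main1}.

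\emph{Step 3: interpolate.} By Stein--Weiss, $[L^{p_0}_{\omega_0},L^{p_1}_{\omega_1}]_\theta=L^p_\omega$ with equal norms, under the stated relations between $p$ and $\omega$. For the inclusion $[A^{p_0}_{\omega_0},A^{p_1}_{\omega_1}]_\theta\subset A^p_\omega$, the inclusion of the couples into the $L$-couple gives $[A_0,A_1]_\theta\subset L^p_\omega$. Elements of $A_0+A_1$ are analytic, and interpolation norm convergence implies $L^1_\nu$-convergence, hence locally uniform convergence, so the elements of $[A_0,A_1]_\theta$ remain analytic. For the reverse inclusion, given $f\in A^p_\omega$, take $F$ admissible for $f$ in the $L$-couple with $\|F\|\lesssim\|f\|_{L^p_\omega}$. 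Then $P_\nu F$ is admissible in the $A$-couple, because $P_\nu$ is bounded on both endpoints and holomorphy in the strip is preserved. Moreover $P_\nu F(\theta)=P_\nu f=f$ by Step 2 applied to $L^p_\omega$. This gives $\|f\|_{[A_0,A_1]_\theta}\lesssim\|f\|_{A^p_\omega}$, proving the equivalence of norms.
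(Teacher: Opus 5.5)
Your proposal is correct and follows essentially the same route as the paper. The steps match: H\"older's inequality gives $\omega\nu^{-1}\in B_p(\nu)$; Stein--Weiss together with the isometric embeddings $A^{p_j}_{\omega_j}\subset L^{p_j}_{\omega_j}$ gives $[A^{p_0}_{\omega_0},A^{p_1}_{\omega_1}]_\theta\subset A^p_\omega$; and applying $P_\nu$, bounded at both endpoints by Theorem~\ref{main1}, to a Stein--Weiss admissible family gives the reverse inclusion. The only addition is your Step~2, which proves the reproducing identity $P_\nu f=f$ on $A^p_\omega\subset A^1_\nu$ that the paper uses tacitly after noting $L^p_\omega\subset L^1_\nu$; your dilation argument (dominated convergence via monotone integral means) settles it, so no convergence in $L^{p_k}_{\omega_k}$ is needed.
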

\begin{proof}
We first show $\omega\nu^{-1} \in B_p(\nu)$. Since $\frac{p\theta}{p_0}+\frac{p(1-\theta)}{p_1}=1$ and $\frac{p'\theta}{p_0'}+\frac{p'(1-\theta)}{p_1'}=1$, by H\"{o}lder's inequality, we obtain
$$
\begin{aligned}
&\frac{\omega(S)}{\nu(S)}\bigg( \frac{\int_{S}(\omega\nu^{-1})^{-\frac{p'}{p}}\nu dA}{\nu(S)} \bigg)^{p-1}\\
=&\frac{\int_{S}\big(\omega_0\nu^{-1}\big)^{\frac{p\theta}{p_0}} \big(\omega_1\nu^{-1}\big)^{\frac{p(1-\theta)}{p_1}}\nu dA}{\nu(S)}
\bigg(\frac{\int_{S}\big((\omega_0\nu^{-1})^{-\frac{p_0'}{p_0}}\big)^{\frac{p'\theta}{p_0'}} \big((\omega_1\nu^{-1})^{-\frac{p_1'}{p_1}}\big)^{\frac{p'(1-\theta)}{p_1'}}\nu dA}{\nu(S)} \bigg)^{p-1}\\
\leq& \bigg(\frac{\omega_0(S)}{\nu(S)}\bigg)^{\frac{p\theta}{p_0}}\bigg(\frac{\omega_1(S)}{\nu(S)}\bigg)^{\frac{p(1-\theta)}{p_1}}
\bigg( \frac{\int_{S}(\omega_0\nu^{-1})^{-\frac{p_0'}{p_0}}\nu dA}{\nu(S)}\bigg)^{\frac{p\theta}{p_0'}}\bigg( \frac{\int_{S}(\omega_1\nu^{-1})^{-\frac{p_1'}{p_1}}\nu dA}{\nu(S)}\bigg)^{\frac{p(1-\theta)}{p_1'}}\\
\leq& [\omega_0\nu^{-1}]_{B_{p_0}(\nu)}^{\frac{p\theta}{p_0}} [\omega_1\nu^{-1}]_{B_{p_1}(\nu)}^{\frac{p(1-\theta)}{p_1}},
\end{aligned}
$$ for every Carleson square $S$, which implies $\omega\nu^{-1} \in B_p(\nu)$.

Let $\mathcal{X}_\theta = \left[A_{\omega_0}^{p_0}, A_{\omega_1}^{p_1}\right]_\theta$. The inclusion $\mathcal{X}_\theta \subseteq A_\omega^p$ follows from the isometric embedding $A_{\omega_j}^{p_j} \rightarrow L_{\omega_j}^{p_j}$ for $j = 0,1$, the fundamental properties of complex interpolation, and the classical Stein-Weiss interpolation theorem (see \cite{BL}), which asserts that
$$
\big[L_{\omega_0}^{p_0}, L_{\omega_1}^{p_1}\big]_\theta = L_\omega^p
$$
with equal norms. In particular, for any $f \in \mathcal{X}_\theta$, we have
$$
\|f\|_{L_\omega^p} = \|f\|_{A_\omega^p} \leq \|f\|_{\mathcal{X}_\theta}.
$$
Next we show that the reverse inclusion holds.
For $f\in A_\omega^p$ and $\|f\|_{A_\omega^p}\leq 1$, by Stein-Weiss interpolation theorem, there exists a function $F_\zeta(z)$, defined for $z \in \mathbb{D}$ and $\zeta$ in the closed strip $\{ \zeta \in \mathbb{C} : 0 \leq \operatorname{Re} \zeta \leq 1 \}$, such that
\begin{itemize}
\item[(i)] $F_\theta(z) = f(z)$ for all $z \in \mathbb{D}$;
\item[(ii)] $\|F_\zeta\|_{L_{\omega_0}^{p_0}} \lesssim 1$ for all $\zeta$ with $\mathrm{Re} \zeta = 0$;
\item[(iii)] $\|F_\zeta\|_{L_{\omega_1}^{p_1}} \lesssim 1$ for all $\zeta$ with $\mathrm{Re} \zeta = 1$.
\end{itemize}
Let $G_\zeta(z)=P_\nu F_\zeta(z)$. It follows from Theorem \ref{main1} that the Bergman projection $P_\nu:L_\omega^p\rightarrow L_\omega^p$ is bounded. Then, by the boundedness of the Bergman projection $P_\nu:L_{\omega_i}^{p_i}\rightarrow L_{\omega_i}^{p_i}, i=0,1$,~$G_\zeta$ defines an analytic function from the strip $\{ \zeta \in \mathbb{C} : 0 \leq \operatorname{Re} \zeta \leq 1 \}$ to $A_{\omega_0}^{p_0}+ A_{\omega_1}^{p_1}$. Further, the containment $L_\omega^p\subset L_\nu^1$ yields
\begin{itemize}
\item[(i)] $G_\theta(z)=P_\nu (f)(z)=f(z)$ for all $z \in \mathbb{D}$;
\item[(ii)] $\left\|G_\zeta\right\|_{A^{p_0}_{\omega_0}} \leq \|P_\nu\|_{L_{\omega_0}^{p_0}\rightarrow L_{\omega_0}^{p_0}}\|F_\zeta\|_{L_{\omega_0}^{p_0}}\lesssim 1$ for all $\zeta$ with $\mathrm{Re} \zeta=0$;
\item[(iii)] $\left\|G_\zeta\right\|_{A^{p_1}_{\omega_1}} \leq \|P_\nu\|_{L_{\omega_1}^{p_1}\rightarrow L_{\omega_1}^{p_1}}\|F_\zeta\|_{L_{\omega_1}^{p_1}}\lesssim 1$ for all $\zeta$ with $\mathrm{Re} \zeta=1$.
\end{itemize}
Therefore, we conclude that $f$ belongs to $\mathcal{X}_\theta$, and $\|f\|_{\mathcal{X}_\theta}\lesssim \|f\|_{A_\omega^p}$. This completes the proof.
\end{proof}
For $q>1$ and a weight $\nu$, we say that a weight $\omega$ belongs to the reverse H\"{o}lder class $RH_{q,\nu}$ if there exists a constant $C=C(\nu,q)>0$ such that
$$
\bigg(\frac{1}{\nu(S)}\int_{S}\omega^q\nu dA\bigg)^{\frac{1}{q}}\leq\frac{C}{\nu(S)}\int_{S}\omega \nu dA,
$$
for every Carleson square $S$.
The following result is an analogue of \cite[Lemma 3.9]{SW}. Furthermore, the methodology for the estimates presented here is adapted from the approach outlined in Lemma 4.4 of \cite{HL}.
\begin{lemma}\label{inter2}
Let $\nu\in \mathcal{D}$ and $p,p_0,q\in(1,\infty)$. If weights $\omega$, $\omega_0$ satisfy $\omega\nu^{-1}\in B_p(\nu) \cap RH_{q,\nu}$ and $\omega_0\nu^{-1}\in B_{p_0}(\nu) \cap RH_{q,\nu}$ with $(\omega\nu^{-1})^{-\frac{p'}{p}},(\omega_0\nu^{-1})^{-\frac{p_0'}{p_0}}\in RH_{q,\nu}$, then there exist $p_1\in(1,\infty)$, a weight $\omega_1$ with $\omega_1 \nu^{-1}\in B_{p_1}(\nu)$, and $\theta\in (0,1)$ such that
$$
A_\omega^p=[A_{\omega_0}^{p_0},A_{\omega_1}^{p_1}]_\theta.
$$
\end{lemma}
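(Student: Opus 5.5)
The plan is to solve for $\omega_1$ from the interpolation relations in Proposition \ref{inter1}, and then check that it satisfies the hypotheses there. Fix $\theta\in(0,1)$ close to $1$ and define $p_1$ and $\omega_1$ by
$$
\frac{1}{p}=\frac{\theta}{p_0}+\frac{1-\theta}{p_1},\qquad \omega^{\frac1p}=\omega_0^{\frac{\theta}{p_0}}\omega_1^{\frac{1-\theta}{p_1}},
$$
that is, $\frac{1}{p_1}=\frac{1}{1-\theta}\big(\frac1p-\frac{\theta}{p_0}\big)$ and $\omega_1=\big(\omega^{\frac1p}\omega_0^{-\frac{\theta}{p_0}}\big)^{\frac{p_1}{1-\theta}}$. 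Once $\omega_1\nu^{-1}\in B_{p_1}(\nu)$ is proved, Proposition \ref{inter1} gives $[A_{\omega_0}^{p_0},A_{\omega_1}^{p_1}]_\theta=A_\omega^p$ directly, so that is the only thing to establish. We do need $p_1\in(1,\infty)$. Since $\frac1{p_1}=\frac1p+\frac{\theta}{1-\theta}\big(\frac1p-\frac1{p_0}\big)$, the limit as $\theta\to0$ is $\frac1p\in(0,1)$. So I would take $\theta$ close to $0$ rather than $1$, and in any case small enough that $p_1\in(1,\infty)$ and the exponents below stay controlled.

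Write $u=\omega\nu^{-1}$, $u_0=\omega_0\nu^{-1}$, $u_1=\omega_1\nu^{-1}$, and set $a=\frac{p_1}{(1-\theta)p}$, $b=\frac{p_1\theta}{(1-\theta)p_0}$. Then $a-b=1$, $u_1=u^{a}u_0^{-b}$, and $a\to1$, $b\to0$ as $\theta\to0$. For each Carleson square $S$ I have to bound
$$
\frac{1}{\nu(S)}\int_S u^a u_0^{-b}\nu\,dA\cdot\Big(\frac{1}{\nu(S)}\int_S u^{-a\frac{p_1'}{p_1}}u_0^{b\frac{p_1'}{p_1}}\nu\,dA\Big)^{p_1-1}.
$$
This is where the reverse Hölder hypotheses come in, following \cite[Lemma 4.4]{HL}. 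For the first factor, apply Hölder with exponents $q/a$ and its conjugate, using $a<q$ for small $\theta$. The piece $\big(\frac{1}{\nu(S)}\int_S u^{q}\nu\big)^{a/q}$ is bounded by $\big(\frac{1}{\nu(S)}\int_S u\nu\big)^a$ because $u\in RH_{q,\nu}$. The remaining piece is $\frac{1}{\nu(S)}\int_S u_0^{-bs}\nu$ with $bs$ small. Since $bs\le p_0'/p_0$ for small $\theta$, Jensen's inequality bounds it by $\big(\frac{1}{\nu(S)}\int_S u_0^{-p_0'/p_0}\nu\big)^{bs\,p_0/p_0'}$. The second factor is handled the same way. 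Its exponent on $u^{-p'/p}$ is close to $1$ (because $p_1\to p$), so the $RH_{q,\nu}$ property of $u^{-p'/p}$ applies to it. Its $u_0$ part has a small positive power, and I control it with $u_0\in RH_{q,\nu}$ and Jensen.

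After these steps the product becomes a combination of $\langle u\rangle_S$, $\langle u^{-p'/p}\rangle_S$, $\langle u_0\rangle_S$ and $\langle u_0^{-p_0'/p_0}\rangle_S$, where $\langle\cdot\rangle_S$ denotes the $\nu$-average over $S$. The $u$ part should appear as $\big(\langle u\rangle_S\langle u^{-p'/p}\rangle_S^{p-1}\big)^{a}$, and the $u_0$ part as $\big(\langle u_0\rangle_S\langle u_0^{-p_0'/p_0}\rangle_S^{p_0-1}\big)^{b'}$ for some $b'\ge0$. These are bounded by powers of $[u]_{B_p(\nu)}$ and $[u_0]_{B_{p_0}(\nu)}$, which gives $u_1\in B_{p_1}(\nu)$.

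The main obstacle is the exponent bookkeeping. The powers of the $u_0$ averages produced by the two factors must combine into an exact power of the $B_{p_0}(\nu)$ characteristic, with matching exponents on $\langle u_0\rangle_S$ and $\langle u_0^{-p_0'/p_0}\rangle_S^{p_0-1}$. If they do not match, I would instead use the lower bound $\langle u_0\rangle_S\langle u_0^{-p_0'/p_0}\rangle_S^{p_0-1}\ge1$, which follows from Hölder, to absorb the unfavourable power. Choosing $\theta$ small enough to keep every Hölder exponent inside the range allowed by $q$ is what makes this work.
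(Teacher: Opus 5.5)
Your argument is correct and has the same skeleton as the paper: the same $p_1$, the same $\omega_1=\omega^a\omega_0^{-b}$ with $a-b=1$, the reduction to Proposition \ref{inter1}, and then H\"older plus reverse H\"older on each of the two factors of the $B_{p_1}(\nu)$ quantity. The H\"older splitting is different, though, and this changes which hypotheses are used.

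The paper uses exponents $1+\epsilon$ and $1+\delta$ with $\epsilon=\theta p/p_0'$ and $\delta=\theta p'/p_0$. These are chosen so that in each factor both pieces carry the same power $s(\theta)$ (resp.\ $t(\theta)$), which tends to $1$. The paper then applies reverse H\"older to all four weights $u$, $u_0^{-p_0'/p_0}$, $u^{-p'/p}$ and $u_0$. You instead put the full exponent $q$ on the dominant weight. You treat the $u_0$-pieces, whose powers tend to $0$, by concavity of $x\mapsto x^r$ for $0<r\le1$. That works. It shows that the reverse H\"older assumptions on $\omega_0\nu^{-1}$ and $(\omega_0\nu^{-1})^{-p_0'/p_0}$ are not needed for this lemma. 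In particular, your appeal to $u_0\in RH_{q,\nu}$ in the second factor is superfluous; Jensen alone suffices there.

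You flag the exponent bookkeeping as the main obstacle, but it closes exactly, so you should simply carry it out. Take $s=(q/a)'$ and $\sigma=(q/c)'$ with $c=a(p-1)/(p_1-1)$. Then
\[
\langle u^a u_0^{-b}\rangle_S\lesssim \langle u\rangle_S^{a}\,\langle u_0^{-p_0'/p_0}\rangle_S^{b(p_0-1)},\qquad
\langle u^{-a/(p_1-1)}u_0^{b/(p_1-1)}\rangle_S^{p_1-1}\lesssim \langle u^{-p'/p}\rangle_S^{a(p-1)}\,\langle u_0\rangle_S^{b}.
\]
So the product is at most a constant times $[u]_{B_p(\nu)}^{a}[u_0]_{B_{p_0}(\nu)}^{b}$, i.e.\ $b'=b$. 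This is exactly the paper's final bound.

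Your fallback would not have rescued a mismatch anyway. The inequality $\langle u_0\rangle_S\langle u_0^{-p_0'/p_0}\rangle_S^{p_0-1}\ge 1$ cannot absorb a leftover positive power of a single average, and such a power is not uniformly bounded over Carleson squares.

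Finally, delete the opening ``$\theta$ close to $1$''; the whole argument is run with $\theta\to 0$.
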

\begin{proof}
By Proposition \ref{inter1}, for $p_1$ and $\omega_1$, we now show that there exists $\theta\in(0,1)$ such that
$$
p_1(\theta)= \frac{1-\theta}{\frac{1}{p}-\frac{\theta}{p_0}}\in(1,\infty)
$$
and the weight
$$
\omega_1(\theta)=\omega^{\frac{p_1}{p(1-\theta)}}\omega_0^{-\frac{p_1\theta}{p_0(1-\theta)}}
$$
satisfies $\omega_1\nu^{-1}\in B_{p_1}(\nu)$.
Since $p_1(\theta)$ is continuous at $0$ and $p_1(0)=p\in (1,\infty)$, we have $p_1(\theta)\in(1,\infty)$ for sufficiently small $\theta>0$.
To show that $\omega_1\nu^{-1}\in B_{p_1}(\nu)$, we estimate
$$
\frac{\omega_1(S)}{\nu(S)}\bigg(\frac{\int_{S}\big(\frac{\omega_1}{\nu}\big)^{-\frac{p_1'}{p_1}}\nu dA}{\nu(S)}\bigg)^{p_1-1},
$$ for all Carleson square $S$.
Since $\frac{p_1}{p(1-\theta)}=\frac{p_1\theta}{p_0(1-\theta)}+1$, by H\"{o}lder's inequality, for $\epsilon>0$ we have
$$
\begin{aligned}
\frac{\omega_1(S)}{\nu(S)}
=&\frac{\int_{S} \big(\frac{\omega}{\nu}\big)^{\frac{p_1}{p(1-\theta)}} \big(\frac{\omega_0}{\nu}\big)^{-\frac{p_1\theta}{p_0(1-\theta)}}\nu dA}{\nu(S)}\\
\leq &  \bigg(\frac{\int_{S} \big(\frac{\omega}{\nu}\big)^{\frac{p_1(1+\epsilon)}{p(1-\theta)}}\nu dA}{\nu(S)}\bigg)^{\frac{1}{1+\epsilon}}
\bigg( \frac{\big(\big(\frac{\omega_0}{\nu}\big)^{-\frac{p_0'}{p_0}}\big)^{\frac{p_1\theta(1+\epsilon)}{p_0'(1-\theta)\epsilon}}\nu dA}{\nu(S)} \bigg)^{\frac{\epsilon}{1+\epsilon}}.
\end{aligned}
$$
Moreover, notice that $\frac{p_1'\theta}{p_0(1-\theta)}-\frac{p_1'}{p(1-\theta)}=-\frac{1}{p_1-1}$. Then for $\delta>0$, H\"{o}lder's inequality gives
$$
\begin{aligned}
\frac{\int_{S}\big(\frac{\omega_1}{\nu}\big)^{-\frac{p_1'}{p_1}}\nu dA}{\nu(S)}
=&\frac{\int_{S} \big(\frac{\omega}{\nu}\big)^{-\frac{p_1'}{p(1-\theta)}}\big(\frac{\omega_0}{\nu}\big)^{\frac{p_1'\theta}{p_0(1-\theta)}} \nu dA}{\nu(S)}\\
\leq& \bigg(\frac{\int_{S}(\big(\frac{\omega}{\nu}\big)^{-\frac{p'}{p}}\big)^{\frac{p_1'(1+\delta)}{p'(1-\theta)}}\nu dA}{\nu(S)} \bigg)^{\frac{1}{1+\delta}}
\bigg(\frac{\int_{S}\big(\frac{\omega_0}{\nu}\big)^{\frac{p_1'\theta(1+\delta)}{p_0(1-\theta)\delta}} \nu dA }{\nu(S)}\bigg)^{\frac{\delta}{1+\delta}}.
\end{aligned}
$$
Choose $\epsilon= \frac{\theta p}{p_0'}$ and $\delta=\frac{\theta p'}{p_0}$. Combining the above two estimates, we have
$$\label{irh}
\begin{aligned}
\frac{\omega_1(S)}{\nu(S)}\bigg(\frac{\int_{S}\big(\frac{\omega_1}{\nu}\big)^{-\frac{p_1'}{p_1}}\nu dA}{\nu(S)}\bigg)^{p_1-1}
\leq &\bigg(\frac{\int_{S} \big(\frac{\omega}{\nu}\big)^{s(\theta)}\nu dA}{\nu(S)}\bigg)^{\frac{1}{1+\epsilon}}
\bigg( \frac{\int_{S}\big(\big(\frac{\omega_0}{\nu}\big)^{-\frac{p_0'}{p_0}}\big)^{s(\theta)}\nu dA}{\nu(S)} \bigg)^{\frac{\epsilon}{1+\epsilon}}\nonumber\\
 &\cdot \bigg(\frac{\int_{S}\big(\big(\frac{\omega}{\nu}\big)^{-\frac{p'}{p}}\big)^{t(\theta)}\nu dA}{\nu(S)} \bigg)^{\frac{p_1-1}{1+\delta}}
\bigg(\frac{\int_{S}\big(\frac{\omega_0}{\nu}\big)^{t(\theta)} \nu dA }{\nu(S)}\bigg)^{\frac{\delta(p_1-1)}{1+\delta}},
\end{aligned}
$$
where
$s(\theta)=\frac{p_1(\theta)(p_0'+\theta p)}{pp_0'(1-\theta)}$ and $ t(\theta)=\frac{p_1(\theta)'(p_0+\theta p')}{p'p_0(1-\theta)}$.
Since $s(\theta)$ and $t(\theta)$ are continuous at $0$ and $s(0)=t(0)=1$, we may choose $\theta$ small enough such that $\max(s(\theta),t(\theta))\leq q$.
Then, by the reverse H\"{o}lder assumption on weights $\omega\nu^{-1}$, $\omega_0\nu^{-1}$, $(\omega\nu^{-1})^{-\frac{p'}{p}}$ and $(\omega_0\nu^{-1})^{-\frac{p_0'}{p_0}}$, we obtain
\begin{align*}
&\frac{\omega_1(S)}{\nu(S)}\bigg(\frac{\int_{S}\big(\frac{\omega_1}{\nu}\big)^{-\frac{p_1'}{p_1}}\nu dA}{\nu(S)}\bigg)^{p_1-1}\\
\lesssim&\bigg(\frac{\omega(S)}{\nu(S)}\bigg)^{\frac{p_1}{p(1-\theta)}}\bigg(\frac{\int_{S}\big(\frac{\omega}{\nu}\big)^{-\frac{p'}{p}}\nu dA}{\nu(S)} \bigg)^{\frac{p_1}{p'(1-\theta)}}\bigg(\frac{\omega_0(S) }{\nu(S)}\bigg)^{\frac{p_1\theta}{p_0(1-\theta)}}\bigg( \frac{\int_{S}\big(\frac{\omega_0}{\nu}\big)^{-\frac{p_0'}{p_0}}\nu dA}{\nu(S)} \bigg)^{\frac{p_1\theta}{p_0'(1-\theta)}}\\
\leq& [\omega\nu^{-1}]_{B_p(\nu)}^{\frac{p_1}{p(1-\theta)}}[\omega_0\nu^{-1}]_{B_{p_0}(\nu)}^{\frac{p_1\theta}{p_0(1-\theta)}}.
\end{align*}
Taking supremum over all Carleson squares, we complete the proof.
\end{proof}
By using the main method in \cite{HL} and \cite[Theorem 1.7]{SW}, we establish the following compact extrapolation result.
\begin{theorem}\label{ce}
Let $T$ be a linear operator, $\nu\in\mathcal{D}$ and $1<p_0,q<\infty$. If $T$ is bounded on $A_\omega^{p_0}$ for all $\omega$ with $\omega\nu^{-1} \in B_{p_0}(\nu)$ and $T$ is compact on $A_{\omega_0}^{p_0}$ for some $\omega_0$ satisfying $\omega_0 \in B_{p_0}(\nu) \cap R H_{q,\nu}$ and $\big(\omega_0\nu^{-1})^{-\frac{p_0'}{p_0}} \in R H_{q,\nu}$, then $T$ is compact on $A_\omega^p$ for all $p \in(1, \infty)$ and all $\omega$ satisfying $\omega\nu^{-1} \in B_p(\nu) \cap R H_{q,\nu}$ and $\big(\omega\nu^{-1}\big)^{-\frac{p'}{p}}\in RH_{q,\nu}$.
\end{theorem}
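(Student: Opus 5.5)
The plan is to combine the interpolation result of Lemma \ref{inter2} with a compactness interpolation theorem for one-sided compact operators (Cwikel's theorem, or the Krasnosel'skii-type version for complex interpolation of spaces with suitable approximation properties, as used in \cite{HL} and \cite[Theorem 1.7]{SW}).

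Fix $p\in(1,\infty)$ and $\omega$ with $\omega\nu^{-1}\in B_p(\nu)\cap RH_{q,\nu}$ and $(\omega\nu^{-1})^{-\frac{p'}{p}}\in RH_{q,\nu}$. We read the hypothesis on $\omega_0$ as $\omega_0\nu^{-1}\in B_{p_0}(\nu)\cap RH_{q,\nu}$, together with $(\omega_0\nu^{-1})^{-\frac{p_0'}{p_0}}\in RH_{q,\nu}$.

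Lemma \ref{inter2}, applied to the pair $(\omega,p)$ and $(\omega_0,p_0)$, produces $p_1\in(1,\infty)$, a weight $\omega_1$ with $\omega_1\nu^{-1}\in B_{p_1}(\nu)$, and $\theta\in(0,1)$ such that
$$A_\omega^p=[A_{\omega_0}^{p_0},A_{\omega_1}^{p_1}]_\theta$$
with equivalent norms. Then $T$ is compact on the endpoint $A^{p_0}_{\omega_0}$. For the other endpoint we need $T$ bounded on $A^{p_1}_{\omega_1}$, whereas the hypothesis only gives boundedness at the exponent $p_0$ for all weights in $B_{p_0}(\nu)$. So the first step is an extrapolation of boundedness, in Rubio de Francia style. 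We transfer boundedness on $A^{p_0}_\sigma$, for all $\sigma\nu^{-1}\in B_{p_0}(\nu)$, to boundedness on $A^{r}_\sigma$ for all $r\in(1,\infty)$ and $\sigma\nu^{-1}\in B_r(\nu)$. The tools are the Rubio de Francia iteration built from the dyadic maximal operators $M_{\nu,\beta}$, which are bounded on $L^r_{\sigma}$ by the $B_r(\nu)$ analogue of Muckenhoupt's theorem, as used in the proof of Theorem \ref{BQ}. Since $T$ acts on analytic functions, we run the argument for the pair $(|Tf|,|f|)$ of functions in $L^r_\sigma$; the space $A^r_\sigma$ is closed in $L^r_\sigma$, and by Theorem \ref{main1} $P_\nu$ is bounded on $L^r_\sigma$, so nothing is lost by working in $L^r$. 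Alternatively, if the theorem is read so that boundedness for all $B_p(\nu)$ weights and all $p$ is part of the hypothesis, this step is immediate.

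Given compactness at one endpoint and boundedness at the other, we invoke the one-sided compactness theorem for the complex method. Cwikel's theorem applies to general Banach couples, but it is cleaner to use the version requiring the couple to be embedded in a common ``nice'' space with a projection. Here $P_\nu$ is simultaneously bounded on $L^{p_0}_{\omega_0}$ and $L^{p_1}_{\omega_1}$ by Theorem \ref{main1}. Hence $(A^{p_0}_{\omega_0},A^{p_1}_{\omega_1})$ is a complemented subcouple of $(L^{p_0}_{\omega_0},L^{p_1}_{\omega_1})$, and the operator $TP_\nu$ is compact on $L^{p_0}_{\omega_0}$ and bounded on $L^{p_1}_{\omega_1}$. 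For weighted $L^p$ couples the compactness interpolation theorem of Cwikel--Kalton (or Krasnosel'skii's theorem for $L^p$ spaces with change of measure) gives that $TP_\nu$ is compact on $[L^{p_0}_{\omega_0},L^{p_1}_{\omega_1}]_\theta=L^p_\omega$. This uses Stein--Weiss, as in Proposition \ref{inter1}. Restricting to $A^p_\omega$ and using Lemma \ref{inter2} yields compactness of $T$ on $A^p_\omega$.

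The main obstacle is the extrapolation-of-boundedness step, in particular verifying the $B_r(\nu)$ Rubio de Francia machinery: boundedness of $M_{\nu,\beta}$ on $L^r_\sigma$ for $\sigma\nu^{-1}\in B_r(\nu)$, and the factorization $w\in B_{p_0}$ obtained via the algorithm with respect to the measure $\nu\,dA$. A secondary point is justifying the compact interpolation for weighted $L^p$ couples with the complementation by $P_\nu$. I would try first to cite Cwikel's theorem directly, since it applies to arbitrary Banach couples, which avoids the approximation-property check.
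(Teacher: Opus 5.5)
Your plan is the paper's proof. Lemma~\ref{inter2} produces $p_1$, $\omega_1$ with $\omega_1\nu^{-1}\in B_{p_1}(\nu)$, and $\theta$. Boundedness of $T$ on $A^{p_1}_{\omega_1}$ then comes from extrapolation; the paper simply invokes ``the standard extrapolation result'', so the step you flag as the main obstacle is treated there as routine. A one-sided compactness theorem for the complex method, \cite[Corollary 7]{CK}, concludes.

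One point needs correcting. Do not fall back on ``citing Cwikel's theorem directly, since it applies to arbitrary Banach couples''. Cwikel's theorem for arbitrary couples concerns the real method. For the complex method, one-sided interpolation of compactness for general Banach couples is still open. The Cwikel--Kalton results need extra structure, for instance couples of Banach function lattices with absolutely continuous norm, or UMD/reflexivity-type hypotheses on the compact endpoint.

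Your lifting to $TP_\nu$ on the weighted couple $(L^{p_0}_{\omega_0},L^{p_1}_{\omega_1})$ is exactly what places you in that setting, so keep that route. It uses $P_\nu$ bounded on both spaces by Theorem~\ref{main1} and identifies the interpolation space by Stein--Weiss. As a side benefit, it only needs the weight relations produced in Lemma~\ref{inter2}, not the Bergman-space interpolation identity itself.

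Finally, standard formulations of Rubio de Francia extrapolation assume the $A^{p_0}_\omega$ bounds are controlled by $[\omega\nu^{-1}]_{B_{p_0}(\nu)}$, which the hypothesis as stated does not spell out. For the intended application $T=T_f^\nu$ with $f\in L^\infty$ this is moot, since Theorem~\ref{main1} gives boundedness on $A^{p_1}_{\omega_1}$ directly.
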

\begin{proof}
Assume the weight $\omega_0$ satisfies $\omega_0 \in B_{p_0}(\nu) \cap R H_{q,\nu}$ and $\big(\omega_0\nu^{-1})^{-\frac{p_0'}{p_0}} \in R H_{q,\nu}$. Then for each $p \in(1, \infty)$ and every weight $\omega$ satisfies $\omega\nu^{-1} \in B_p(\nu) \cap R H_{q,\nu}$ with $\big(\omega\nu^{-1}\big)^{-\frac{p'}{p}}\in RH_{q,\nu}$, Lemma \ref{inter2} yields there exist $1<p_1<\infty$, a weight $\omega_1$ with $\omega_1\nu^{-1}\in B_{p_1}(\nu)$, and $\theta\in(0,1)$ such that
$$
A_\omega^p=[A_{\omega_0}^{p_0},A_{\omega_1}^{p_1}]_\theta.
$$
By the standard extrapolation result, $T$ is bounded on $A_{\omega_1}^{p_1}.$ Furthermore, \cite[Corollary 7]{CK} yields that $T_f^\nu$ is compact on $A_{\omega}^p$, which completes the proof.
\end{proof}
Based on Lemma \ref{inter2} and Theorem \ref{ce}, we obtain the following characterization of the compactness of Toeplitz operators with bounded symbols in the two-weight setting under certain reverse H\"{o}lder condition.
\begin{corollary}\label{BpRH}
Let $\nu \in \mathcal{D}$ and $1<p,q<\infty$. For $f\in L^\infty$, if a weight $\omega$ satisfies $\omega\nu^{-1}\in B_{p}(\nu)\cap RH_{q,\nu}$ and $\big(\omega\nu^{-1}\big)^{-\frac{p'}{p}}\in RH_{q,\nu}$, then the following statements are equivalent:
\begin{itemize}
\item [(i)] $T_f^\nu$ is compact on $A_\omega^p$;
\item[(ii)]$T_f^\nu$ is compact on $A_\nu^p$;
\item[(iii)]
$\lim\limits_{|z|\to 1^-} \widetilde{T}(z)=0.
$
\end{itemize}
\end{corollary}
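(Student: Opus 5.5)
The plan is to prove (ii)$\Leftrightarrow$(iii) by the one-weight theory, and then pass between $A^p_\nu$ and $A^p_\omega$ with the compact extrapolation Theorem~\ref{ce}, using $\omega_0=\nu$ as the base weight.

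\emph{One-weight equivalence (ii)$\Leftrightarrow$(iii).} Proposition~\ref{TinWL} (applied with $\nu$ in place of $\omega$) gives $T_f^\nu\in \mathrm{WL}_\nu^p$ for $f\in L^\infty$. Theorem~\ref{main} then says that $T_f^\nu$ is compact on $A_\nu^p$ if and only if its Berezin transform vanishes at the boundary. In particular, compactness on $A^{p_0}_\nu$ for one exponent $p_0$ is equivalent to compactness on $A^{p}_\nu$ for every $p\in(1,\infty)$, since condition (iii) does not depend on $p$.

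\emph{Boundedness on the two-weight spaces.} For every weight $\omega$ with $\omega\nu^{-1}\in B_p(\nu)$, Theorem~\ref{main1} gives that $P_\nu$ is bounded on $L^p_{\omega\nu^{-1}\cdot\nu}=L^p_\omega$. Hence $T_f^\nu=P_\nu M_f$ is bounded on $A^p_\omega$, with norm at most $\|f\|_\infty\|P_\nu\|$. This supplies the boundedness hypothesis of Theorem~\ref{ce} for every exponent and every admissible weight.

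\emph{(ii)$\Rightarrow$(i).} Take $\omega_0=\nu$, so that $\omega_0\nu^{-1}\equiv 1$. The constant weight trivially lies in $B_{p_0}(\nu)$ with constant $1$, and it and its powers lie in $RH_{q,\nu}$. By (ii), $T_f^\nu$ is compact on $A^{p_0}_\nu$. Applying Theorem~\ref{ce} gives compactness on $A^p_\omega$ for all $\omega$ with $\omega\nu^{-1}\in B_p(\nu)\cap RH_{q,\nu}$ and $(\omega\nu^{-1})^{-p'/p}\in RH_{q,\nu}$.

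\emph{(i)$\Rightarrow$(ii).} Reverse the roles: take $\omega$ itself as the base weight $\omega_0$ with exponent $p$. Its hypotheses are exactly those required in Theorem~\ref{ce}, noting that the theorem's ``$\omega_0\in B_{p_0}(\nu)$'' should be read as $\omega_0\nu^{-1}\in B_{p_0}(\nu)$. The target weight $\nu$ satisfies the conditions trivially as above, so Theorem~\ref{ce} yields compactness on $A^p_\nu$.

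The step I expect to be the main obstacle is checking that Theorem~\ref{ce}, and through it Lemma~\ref{inter2}, applies symmetrically. I must make sure the interpolation identity $A^p_\omega=[A^{p_0}_{\omega_0},A^{p_1}_{\omega_1}]_\theta$ is available in both directions, i.e.\ with $\nu$ as the target and $\omega$ as the source. I also need the one-sided compactness interpolation of Cwikel--Kalton \cite[Corollary 7]{CK}, which only requires compactness at one endpoint. Its hypotheses (Banach couples of the appropriate type, e.g.\ reflexive weighted Bergman spaces) need a brief check. Once this is settled, the three equivalences follow by chaining the steps above.
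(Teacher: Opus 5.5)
Your proposal is correct and follows the paper's proof essentially verbatim. You obtain (ii)$\Leftrightarrow$(iii) from Proposition~\ref{TinWL} and Theorem~\ref{main}, boundedness on $A^p_\omega$ from Theorem~\ref{main1}, and both directions of (i)$\Leftrightarrow$(ii) from Theorem~\ref{ce}, with $\omega$ and $\nu$ swapping roles as base and target weight (for $\nu$, the ratio $\nu\nu^{-1}\equiv 1$ trivially meets every $B_p(\nu)$ and reverse H\"older requirement), and your reading of ``$\omega_0\in B_{p_0}(\nu)$'' in Theorem~\ref{ce} as $\omega_0\nu^{-1}\in B_{p_0}(\nu)$ is the intended one.
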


\begin{proof}
(i) $\Rightarrow$ (ii). Assume that $T_f^\nu$ is compact on $A_\omega^p$, where the weight $\omega$ satisfies $\omega\nu^{-1}\in B_{p}(\nu)\cap RH_{q,\nu}$ with $(\omega\nu^{-1})^{-\frac{p'}{p}}\in RH_{q,\nu}$. It follows from Theorem \ref{main1} that $T_f^\nu$ is bounded on~$A_{\omega}^{p}$.
This combined with Theorem \ref{ce} yields that $T_f^\nu$ is compact on $A_\nu^p$.

The implication (ii) $\Rightarrow$ (i) follows from Theorem \ref{main1} and Theorem \ref{ce} similarly. The equivalence (ii) $\Leftrightarrow$ (iii) is a consequence of Theorem \ref{main} and Proposition \ref{TinWL}, and therefore we complete the proof.
\end{proof}
We recall the following useful property of weights in $\widehat{\mathcal{D}}(\mathbb{D})$ \cite{KR}.
For a weight $\omega$ on $\mathbb{D}$ such that $\omega\big(S(a)\big)>0$ for all $a\in \mathbb{D}\setminus\{0\}$, then $\omega\in \widehat{\mathcal{D}}(\mathbb{D})$ if and only if there exist $\beta=\beta(\omega)>0$ and $C=C(\omega)\geq 1$ such that
\begin{equation}\label{DhatD}
\frac{\omega\big(S(a)\big)}{(1-|a|)^\beta}\leq C \frac{\omega\big(S(a')\big)}{(1-|a'|)^\beta}, \quad 0<|a|\leq |a'|<1,\quad \arg a=\arg a'.
\end{equation}
A weight $\omega$ is called essentially (or almost) constant in each hyperbolically bounded region, if for any $s>0$, there exists $C(\omega,s)>1$ such that
\begin{equation}\label{ec}
C^{-1}\omega(\zeta)\leq \omega(z)\leq C\omega(\zeta), \quad{\beta(z,\zeta)<s}, \quad z,\zeta\in \D.
\end{equation}
For $\omega\in \mathcal{D}(\D)$ and $\nu\in\mathcal{D}$, there exist $r_1=r_1(\omega)>0$ and $r_2=r_2(\nu)>0$ such that both $\omega\big(D(\cdot,r)\big)$ and $\nu\big(D(\cdot,r)\big)$ satisfy \eqref{ec}, for each $ r\geq\max\{r_1,r_2\}.$ From now on, we denote $ R=R(\omega,\nu)=\max\{r_1,r_2\}$.
Let $$\sigma_{\omega,\nu ,r}(z)=\frac{\omega\big(D(z,r)\big)}{\nu \big(D(z,r)\big)}\nu(z),\quad z\in\mathbb{D},$$ where $r\geq R$. Next we prove that $A_{\sigma_{\omega,\nu ,r}}^p=A_\omega^p$ with an equivalent norm.
\begin{lemma}\label{normeq}
Let $\omega\in \mathcal{D}(\D)$ and $\nu\in\mathcal{D}$. Then $\sigma_{\omega,\nu ,r}(S)\asymp\omega(S)$ uniformly over all Carleson squares $S$ for every $r\geq R$. Furthermore,
\begin{equation}\label{nmeq1}
\|f\|_{A^p_{\sigma_{\omega,\nu ,r}}}\asymp\|f\|_{A^p_\omega}, \quad f\in H(\D).
\end{equation}
\end{lemma}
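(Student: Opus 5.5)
The plan has two parts. First we compare $\sigma_{\omega,\nu,r}$ and $\omega$ on Carleson squares. Then we pass to functions through the almost-constancy of $\sigma_{\omega,\nu,r}$ on hyperbolic disks together with subharmonicity.

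\emph{Step 1: $\sigma_{\omega,\nu,r}(S)\asymp\omega(S)$.} Fix a Carleson square $S=S(I)$ and $r\ge R$. By Fubini,
$$\sigma_{\omega,\nu,r}(S)=\int_S\frac{\nu(z)}{\nu(D(z,r))}\int_{D(z,r)}\omega(\zeta)\,dA(\zeta)\,dA(z)=\int_{\mathbb D}\omega(\zeta)\,\Phi_S(\zeta)\,dA(\zeta),$$
where
$$\Phi_S(\zeta)=\int_{S\cap D(\zeta,r)}\frac{\nu(z)}{\nu(D(z,r))}\,dA(z).$$
Since $\nu(D(\cdot,r))$ satisfies \eqref{ec} for $r\ge R$, we have $\nu(D(z,r))\asymp\nu(D(\zeta,r))$ whenever $\beta(z,\zeta)<r$. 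Hence $0\le\Phi_S(\zeta)\lesssim 1$, and $\Phi_S$ vanishes off the hyperbolic $r$-neighbourhood $\widetilde S$ of $S$, which is contained in a dilated square $S(\lambda I)$ with $\lambda=\lambda(r)$ (lowered slightly near the origin, trivial for large squares). This gives the upper bound $\sigma_{\omega,\nu,r}(S)\lesssim\omega(S(\lambda I))\lesssim\omega(S)$, where the last inequality uses $\omega\in\widehat{\mathcal D}(\mathbb D)$ via \eqref{DhatD} together with the standard fact that a doubling-type weight controls the neighbouring squares of the same size; we cover $S(\lambda I)$ by boundedly many squares comparable to $S$ and use \eqref{DhatD} on each.

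For the lower bound, we restrict to points $\zeta$ in an inner portion $S'$ of $S$, say the part of $S$ at hyperbolic distance $>r$ from $\partial S\cap\mathbb D$. For such $\zeta$ we have $D(\zeta,r)\subset S$, so
$$\Phi_S(\zeta)\asymp\frac{\nu(D(\zeta,r))}{\nu(D(\zeta,r))}=1.$$
Thus $\sigma_{\omega,\nu,r}(S)\gtrsim\omega(S')$. We need $\omega(S')\gtrsim\omega(S)$. The set $S'$ contains a square $S(J)$ with $J\subset I$ of comparable length, shifted in height by a bounded factor, so $\omega(S')\gtrsim\omega(S(J))\gtrsim\omega(S)$. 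This uses \eqref{DhatD} and $\widecheck{\mathcal D}(\mathbb D)$, the latter ensuring that the mass of $S$ is not concentrated near the side boundary. I expect this to be the main obstacle. If the comparison via subsquares is awkward, I will first try the $K$-top: $\omega(S)\lesssim\omega(T_K(a))$, and $T_K(a)$ is covered by boundedly many hyperbolic disks, each of which, after enlargement, lies inside $S$ up to neighbouring pieces.

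\emph{Step 2: norm equivalence.} Given $f\in H(\mathbb D)$, subharmonicity of $|f|^p$ gives
$$|f(z)|^p\lesssim\frac{1}{\omega(D(z,r))}\int_{D(z,r)}|f|^p\omega\,dA$$
for $\omega\in\mathcal D(\mathbb D)$. This needs an averaged version valid for non-radial doubling weights. Since such a pointwise sub-mean inequality against $\omega$ can fail, I prefer the route via Carleson measures instead. Both $\omega\,dA$ and $\sigma\,dA$ (with $\sigma=\sigma_{\omega,\nu,r}$) are weights in $\widehat{\mathcal D}(\mathbb D)$-type classes. By Step 1 they have comparable masses on all Carleson squares, and $\sigma$ is essentially constant on hyperbolic disks. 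Using the known characterization of $p$-Carleson measures for $A^p_\omega$ when $\omega\in\widehat{\mathcal D}(\mathbb D)$, namely $\mu(S)\lesssim\omega(S)$ \cite{KR,PR}, we get the identity $A^p_\omega\hookrightarrow L^p_{\sigma}$ from $\sigma(S)\lesssim\omega(S)$, and symmetrically $A^p_\sigma\hookrightarrow L^p_\omega$ from $\omega(S)\lesssim\sigma(S)$, once we check that $\sigma\in\widehat{\mathcal D}(\mathbb D)$. That membership follows from Step 1 and $\omega\in\widehat{\mathcal D}(\mathbb D)$. The two embeddings together yield \eqref{nmeq1}.
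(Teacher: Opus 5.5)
\textbf{The gap is in Step 2.} There is no available characterization ``$\mu$ is a $p$-Carleson measure for $A^p_\omega$ iff $\mu(S)\lesssim\omega(S)$'' for non-radial $\omega\in\widehat{\mathcal{D}}(\mathbb{D})$. The known theorem is for radial weights. Its sufficiency half is proved through exactly the kind of local sub-mean or maximal-function estimate that you yourself say can fail for non-radial $\omega$. So switching to Carleson measures does not get around the problem, and both of your embeddings rest on it.

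In fact the embedding $A^p_\omega\hookrightarrow L^p_{\sigma_{\omega,\nu,r}}$ can fail for $\omega\in\mathcal{D}(\mathbb{D})$. Take $\nu\equiv1$ and a separated sequence $\{z_k\}$ with one point in each cell of a hyperbolic Whitney grid with large cells. It is hyperbolically relatively dense but has small Seip density, so some $F\in A^p\setminus\{0\}$ vanishes on it. Let $\omega$ spread the mass $(1-|z_k|)^2$ uniformly over the disc $\{|z-z_k|<\epsilon_k(1-|z_k|)\}$, with $\epsilon_k\to0$. Then $\omega(S(I))\asymp|I|^2$ for every Carleson square, so $\omega\in\mathcal{D}(\mathbb{D})$, and $\sigma_{\omega,1,r}\asymp1$ for all large $r$. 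Now set $g_j=z^jF/\|z^jF\|_{A^p}$; these tend to $0$ locally uniformly. The Schwarz lemma around each zero $z_k$ gives $\|g_j\|_{A^p_\omega}^p\lesssim\sum_k\epsilon_k^p\int_{D(z_k,1)}|g_j|^p\,dA\to0$. Meanwhile $\|g_j\|_{A^p_{\sigma_{\omega,1,r}}}\asymp\|g_j\|_{A^p}=1$.

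So \eqref{nmeq1} cannot be obtained from Carleson-square comparability plus doubling alone. A non-concentration property of $\omega$ is indispensable, at least for the direction $\|f\|_{A^p_{\sigma_{\omega,\nu,r}}}\lesssim\|f\|_{A^p_\omega}$. The paper does not prove this step by hand either; it imports it from \cite[Theorem 1.3]{PR}. Where the lemma is actually used, one has $\omega\nu^{-1}\in B_p(\nu)$, hence the $KT(\nu)$ property of Proposition~\ref{KTnu}, which excludes exactly this kind of concentration. Your Step 2 needs an input of that type.

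\textbf{Step 1 is correct.}
\begin{itemize}
\item The upper bound is the paper's Fubini argument. Since $S(\lambda I)$ is concentric with $S(I)$, \eqref{DhatD} gives $\omega(S(\lambda I))\lesssim\lambda^\beta\omega(S(I))$ directly, with no covering needed.
\item For the lower bound you take a different route. The paper uses radiality of $\nu\in\mathcal{D}$ to get $\nu(S\cap D(\zeta,r))\gtrsim\nu(D(\zeta,r))$ for every $\zeta\in S$, so the inner integral is $\gtrsim1$ on all of $S$. Your restriction to an inner concentric square $S(J)$ with $|J|=|I|/M(r)$ also works.
\item The step you flagged as the main obstacle is immediate from \eqref{DhatD}: $\omega(S(J))\ge C^{-1}M^{-\beta}\omega(S(I))$, with no appeal to $\widecheck{\mathcal{D}}(\mathbb{D})$.
\end{itemize}
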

\begin{proof}
Fubini's theorem together with \eqref{DhatD} shows that
$$
\begin{aligned}
\sigma_{\omega,\nu ,r}\big(S(a)\big)
=&\int_{S(a)}\left(\int_{D(\zeta,r)}\omega(z)dA(z)\right)\frac{\nu(\zeta)}{\nu\big(D(z,r)\big)}dA(\zeta)\\
=&\int_{{\{z\in\mathbb{D}:S(a)\cap D(z,r)\neq\o \}}}\left(\int_{D(z,r)\cap S(a)}\frac{\nu(\zeta)}{\nu\big(D(z,r)\big)}dA(\zeta)\right)\omega(z)dA(z)\\
\lesssim &\omega\big(S(b)\big) \lesssim \omega\big(S(a)\big), \quad |a|>\tanh r
\end{aligned}
$$
where $b\in\mathbb{D}$ is chosen so that $\arg b=\arg a$ and $1-|b|\asymp 1-|a|$.

On the other hand, for $\nu\in \mathcal{D}$ and every Carleson square $S$, we have
\begin{equation}\label{eq:intersection}
 \nu\bigl(S\cap D(\zeta,r)\bigr)\gtrsim\nu\bigl(D(\zeta,r)\bigr),\quad \zeta\in S.
\end{equation}
Indeed, write $S=S(I)$ and fix $\zeta=\rho e^{i\theta}\in S$.
Since the center $e^{i\theta}$ of $J_\zeta$ belongs to $I$ and $|J_\zeta|\le |I|$, we have
$
 |I\cap J_\zeta|\ge\frac12|J_\zeta|,
$
where $J_\zeta$ is the boundary arc of $S(\zeta)$.
Since $\nu\in\mathcal{D}$, it follows that
\[
\nu\bigl(S\cap D(\zeta,r)\bigr)
 \ge \nu\bigl(S\cap T_K(\zeta)\bigr)
 =\frac{|I\cap J_\zeta|}{|J_\zeta|}\,\nu\big(T_K(\zeta)\big)
 \ge\frac12\nu \big(T_K(\zeta)\big)
 \gtrsim \nu \big(S(\zeta)\big),
\]
which proves \eqref{eq:intersection}. This combined with Fubini's theorem yields
$$
 \sigma_{\omega,\nu ,r}(S)
 \ge\int_S\omega(\zeta)
 \left(\int_{S\cap\Delta(\zeta,r)}
 \frac{\nu(z)}{\nu\big(D(z,r)\big)}dA(z)\right)dA(\zeta)
 \gtrsim \omega(S).
$$
 Then \eqref{nmeq1} follows from \cite[Theorem 1.3]{PR}.
\end{proof}
We now adapt the machinery from \cite{APR2} to derive the next lemma.
\begin{lemma}\label{invarRH}
Let $\nu\in\mathcal D$ and $1<p<\infty$. Suppose $\eta\in B_p(\nu)$ and that $\eta$ is essentially constant on hyperbolically bounded regions. Then there exists $q=q(\nu)>1$ such that $\eta\in RH_{q,\nu}$.
\end{lemma}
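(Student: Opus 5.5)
We prove a reverse Hölder inequality for $\eta$ relative to the measure $\nu\,dA$ on Carleson squares. The argument combines a Gehring-type self-improvement with the dyadic stopping-time construction of \cite{APR2}. The essential constancy of $\eta$ is what replaces the doubling or translation structure missing in our setting. By the usual equivalence between dyadic and continuous Carleson squares for $\nu\in\mathcal D$ (via the two grids $\mathcal D^\beta$, $\beta\in\{0,\frac13\}$, as in Lemma~\ref{PQ}), it suffices to prove
$$
\Big(\frac{1}{\nu(S(I))}\int_{S(I)}\eta^q\nu\,dA\Big)^{1/q}\lesssim \frac{1}{\nu(S(I))}\int_{S(I)}\eta\,\nu\,dA
$$
for all dyadic $I\in\mathcal D^\beta$.

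\textbf{Step 1: reduction to top halves.} Decompose $S(I)=\bigcup_{J\in\mathcal D^\beta,\,J\subset I}T(J)$, where $T(J)$ are the top halves introduced in the proof of Theorem~\ref{BQ}. Each $T(J)$ is a hyperbolically bounded region of uniformly bounded size. Hence, by \eqref{ec}, $\eta\asymp \eta_J$ on $T(J)$, where
$$
\eta_J=\frac{1}{\nu(T(J))}\int_{T(J)}\eta\,\nu\,dA .
$$
Since $\nu\in\mathcal D$, we also have $\nu(T(J))\asymp\nu(S(J))$. Consequently both sides of the reverse Hölder inequality become sums over the tree,
$$
\int_{S(I)}\eta^q\nu\,dA\asymp\sum_{J\subset I}\eta_J^q\,\nu(T(J)),
$$
and the problem is purely discrete on the dyadic tree, with measure $\nu(T(J))$ on the vertices.

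\textbf{Step 2: $B_\infty$-type decay from the $B_p(\nu)$ condition.} From $\eta\in B_p(\nu)$, Proposition~\ref{KTnu} (the $KT(\nu)$ property applied to the weight $\eta\nu$) gives the following. For every $S$ and every $E\subset S$,
$$
\frac{\nu(E)}{\nu(S)}\lesssim\Big(\frac{(\eta\nu)(E)}{(\eta\nu)(S)}\Big)^{\delta}.
$$
Conversely, $B_p(\nu)$ gives, via Hölder's inequality,
$$
\frac{(\eta\nu)(E)}{(\eta\nu)(S)}\lesssim \Big(\frac{\nu(E)}{\nu(S)}\Big)^{1/p}.
$$
Apply this last inequality with $E$ the union of the maximal subsquares $S(J)\subset S(I)$ on which the average of $\eta$ exceeds $\lambda\langle\eta\rangle_{S(I)}$, for large $\lambda$. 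These squares are selected by a stopping time, and in the discrete model $\eta$ is essentially constant on each $T(J)$. This yields geometric decay of the measure of high level sets:
$$
\nu\big(\{\eta>\lambda\langle\eta\rangle_{S(I)}\}\cap S(I)\big)\le C\lambda^{-1-\varepsilon}\,\nu(S(I)),
$$
with $\varepsilon=\varepsilon(p,[\eta]_{B_p(\nu)},\nu)>0$.

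\textbf{Step 3: iteration along the stopping tree.} Iterate the stopping construction: from each stopping square, restart with threshold $\lambda$ times the new average. The successive generations of stopping squares have $\nu$-measures decaying like $\gamma^k$ with $\gamma<1$, while the averages grow at most like $(C\lambda)^k$. The comparison between a stopping square's average and its parent's average is exactly where essential constancy (Step 1) is used: the first square exceeding the threshold has average at most $C\lambda$ times the parent's, since its top half has comparable $\eta$-values. Summing over generations gives
$$
\int_{S(I)}\eta^q\nu\,dA\lesssim\sum_k (C\lambda)^{kq}\gamma^k\,\langle\eta\rangle_{S(I)}^q\,\nu(S(I)),
$$
which converges once $q>1$ is chosen close to $1$ with $(C\lambda)^{q}\gamma<1$, and we conclude.

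\textbf{Main obstacle.} The obstacle is Step 2, namely obtaining a strict decay rate $\gamma<1$ from $B_p(\nu)$ alone, uniformly over all squares. Without doubling of $\eta\nu$ under passage from a square to its parent, the stopping averages could jump. I would try to secure the needed doubling of $\eta\nu$ directly from $B_p(\nu)$ together with $\nu\in\mathcal D$: by the Hölder estimate above, $\nu(T(J))\gtrsim\nu(S(J))$ forces $(\eta\nu)(T(J))\gtrsim(\eta\nu)(S(J))$. This shows $\eta\nu$ has its mass concentrated in tops, which is precisely the ingredient that makes the stopping-time iteration close.
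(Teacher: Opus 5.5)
\textbf{The gap is in Step 2.} The second inequality there,
\[
\frac{(\eta\nu)(E)}{(\eta\nu)(S)}\lesssim\Big(\frac{\nu(E)}{\nu(S)}\Big)^{1/p},
\]
does not follow from $\eta\in B_p(\nu)$ via H\"older. Writing $\nu(E)=\int_E\eta^{1/p}\eta^{-1/p}\nu\,dA$ and using \eqref{Bp} gives only
\[
\Big(\frac{\nu(E)}{\nu(S)}\Big)^{p}\le[\eta]_{B_p(\nu)}\,\frac{(\eta\nu)(E)}{(\eta\nu)(S)},
\]
which points in the same direction as the $KT(\nu)$ inequality you quote first. An upper bound for the $\eta\nu$-mass of a set by a power of its $\nu$-mass is exactly what a reverse H\"older inequality yields. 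Invoking it here is therefore circular, and the level-set decay $\lambda^{-1-\varepsilon}$ is unsupported.

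What you can honestly get is weaker. The stopping rule gives $\nu(E)<\lambda^{-1}\nu(S)$. The correct H\"older direction, applied to $S\setminus E$, then gives $(\eta\nu)(S\setminus E)\ge c_0(\eta\nu)(S)$ with $c_0=[\eta]_{B_p(\nu)}^{-1}(1-\lambda^{-1})^p$. Hence only $\gamma\le(1-c_0)\lambda^{-1}$.

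With this $\gamma$, your Step 3 series has ratio $(C\lambda)^q(1-c_0)\lambda^{-1}\to C(1-c_0)$ as $q\to1^+$. Here $C$ is the child-to-parent doubling constant of $\nu$, so $C\ge 2$ for radial $\nu$, and $c_0\le[\eta]_{B_p(\nu)}^{-1}$. So the limit is $\ge1$ whenever $[\eta]_{B_p(\nu)}\ge2$, and the series diverges. The loss comes from bounding $\eta^q$ on the good part of a $k$-th generation square by $\big((C\lambda)^k\langle\eta\rangle_{S(I)}\big)^q$ and integrating against $\nu$. That pays a full power of the growth factor.

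\textbf{The missing idea} (this is how the paper closes the argument) is to track $\eta\nu$-mass instead of $\nu$-mass, and to split $\eta^q=\eta^{q-1}\eta$.
\begin{itemize}
\item For a stopping square $L$ with children $\mathcal L(L)$, the set $S(L)\setminus\bigcup_{L'\in\mathcal L(L)}S(L')$ has $\nu$-measure at least $(1-\lambda^{-1})\nu(S(L))$. Apply the displayed inequality to it to get $(\eta\nu)\big(\bigcup_{L'\in\mathcal L(L)}S(L')\big)\le(1-c_0)(\eta\nu)(S(L))$. Iterating, $\sum_{L\in\mathcal L_k}(\eta\nu)(S(L))\le(1-c_0)^k(\eta\nu)(S(I))$.
\item Essential constancy on $T_K(J)$, for the non-stopped $J\subset L$, gives $\eta\lesssim\lambda(C\lambda)^k\langle\eta\rangle_{S(I)}$ on the good part of $S(L)$ for $L\in\mathcal L_k$.
\end{itemize}
Combining these,
\[
\int_{S(I)}\eta^q\nu\,dA\lesssim\langle\eta\rangle_{S(I)}^{q-1}(\eta\nu)(S(I))\sum_{k\ge0}\big((1-c_0)(C\lambda)^{q-1}\big)^k .
\]
This converges for $q$ close to $1$, because the growth factor now enters only with exponent $q-1$.

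Your remark that tops carry a fixed fraction of the $\eta\nu$-mass is correct, but it does not by itself supply the missing decay. Two smaller points:
\begin{itemize}
\item The parent-to-child bound on averages in Step 3 comes from maximality and the doubling of $\nu$, not from essential constancy.
\item For $\nu\in\mathcal D$ the half-top $T(J)$ can have zero $\nu$-mass when \eqref{Ddef} requires $K>2$. So $\nu(T(J))\gtrsim\nu(S(J))$, used in Step 1 and in your last paragraph, should be stated for $T_K(J)$.
\end{itemize}
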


\begin{proof}

Fix a Carleson square $S(I)$ and $N>1$. Given an arc $I\subset\mathbb T$, we denote by $\mathcal D_{I}$
the collection of all dyadic descendants of $I$ obtained by repeated
bisection. More precisely, the first generation consists of the two
disjoint subarcs of $I$ of length $|I|/2$, and subsequent generations
are defined recursively by bisecting each dyadic subarc. The stopping children~$\mathcal L(I)$ of a given $I$ is the family of maximal dyadic arcs $L\in \mathcal{D}_{I}$ satisfying
$$\label{eq:stopping}
 \frac{(\eta\nu)\big(S(L)\big)}{\nu\big(S(L)\big)}
 >N\frac{(\eta\nu)\big(S(I)\big)}{\nu\big(S(I)\big)}.
$$
Let $\mathcal L_1=\mathcal L(I).$
For each $k\ge2$, let
$ \mathcal L_{k}=\bigcup\limits_{L\in\mathcal L_{k-1}(I)}\mathcal L(L).$ Hence $\mathcal L_k$ is the $k$-th stopping generation associated with the initial $I$.
For $L\in \mathcal L_k$, since the members of $\mathcal L_{k+1}$ are pairwise disjoint, we have
$$\label{eq:numass}
 \sum_{\substack{L'\in\mathcal L_{k+1}\\ L'\subset L}}\nu\big(S(L')\big)
 <N^{-1}\frac{\nu\big(S(L)\big)}{(\eta\nu)\big(S(L)\big)}\sum\limits_{\substack{L'\in\mathcal L_{k+1}\\ L'\subset L}}(\eta\nu)\big(S(L')\big)
 \le N^{-1}\nu\big(S(L)\big).
$$
It follows that
\begin{equation}\label{eq:Fnumass}
\nu\bigg(S(L)\setminus\bigcup_{\substack{L'\in\mathcal L_{k+1}\\ L'\subset L}}S(L')\bigg)
 \ge(1-N^{-1})\nu\big(S(L)\big).
\end{equation}
On the other hand, H\"older's inequality gives
$$
\begin{aligned}
&\nu\bigg(S(L)\setminus\bigcup_{\substack{L'\in\mathcal L_{k+1}\\ L'\subset L}}S(L')\bigg) \nu\big(S(L)\big)^{-1}\\
 \le&
\left((\eta\nu)\bigg(S(L)\setminus\bigcup_{\substack{L'\in\mathcal L_{k+1}\\ L'\subset L}}S(L')\bigg)\right)^{1/p}
 \bigg(\int_{S(L)} \eta^{-\frac{p'}{p}}\nu dA\bigg)^{1/p'}{\nu\big(S(L)\big)}^{-1}\\
  \leq& [\eta]_{B_p(\nu)}^{1/p}
 \left(
(\eta\nu)\bigg(S(L)\setminus\bigcup_{\substack{L'\in\mathcal L_{k+1}\\ L'\subset L}}S(L')\bigg)
 \bigg({(\eta\nu)\big(S(L)\big)}\bigg)^{-1}
 \right)^{1/p}.
 \end{aligned}
$$
This combined with \eqref{eq:Fnumass} shows that there exists $c_0=c_0(\eta,\nu,p)\in (0,1)$ such that
\[
(\eta\nu)\bigg(\bigcup_{\substack{L'\in\mathcal L_{k+1}\\ L'\subset L}}S(L')\bigg)= (\eta\nu)\big(S(L)\big)
 -(\eta\nu)\bigg(S(L)\setminus\bigcup_{\substack{L'\in\mathcal L_{k+1}\\ L'\subset L}}S(L')\bigg)
 \leq(1- c_0) (\eta\nu)\big(S(L)\big),
\]
where $c_0=[\eta]_{B_p(\nu)}^{-1}(1-N^{-1})^p$.
Iterating the stopping-time construction, we have
\begin{equation}\label{eq:generationmass}
 \sum_{L\in\mathcal L_k}(\eta\nu)\big(S(L)\big)
 \le(1-c_0)^k(\eta\nu)\big(S(I)\big).
\end{equation}
For $L\in\mathcal L_k$, then by the choice of $\mathcal L_{k+1}$ and \eqref{Dhat1}, there exists $C_1(\nu)>0$ such that
$$
 \frac{(\eta\nu)\big(S(L')\big)}{\nu\big(S(L')\big)}
 \le C_1\frac{(\eta\nu)\big(S(\widetilde{L'})\big)}{\nu\big(S(\widetilde{L'})\big)}\\
 \le C_1 N\frac{(\eta\nu)\big(S(L)\big)}{\nu\big(S(L)\big)}, \quad L'\in\mathcal L_{k+1},
$$where $L'\subset \widetilde{L'}$, $|\widetilde{L'}|=2|L'|$ and $\widetilde{L'}\in \mathcal{D}_{I}$.
Consequently, for every $L\in\mathcal L_k$,
\begin{equation}\label{eq:avgrowth}
 \frac{(\eta\nu)\big(S(L)\big)}{\nu\big(S(L)\big)}
 \le(C_1 N)^k\frac{(\eta\nu)\big(S(I)\big)}{\nu\big(S(I)\big)}.
\end{equation}
For $L\in\mathcal L_k$ and $z\in S(L)\setminus\bigcup\limits_{L'\in\mathcal L(L)}S(L')$, there exists $J\subset L$ such that $z\in T_K(J)$. Then \eqref{Ddef} implies
\[
\eta(z)\asymp \frac{1}{\nu\big(T_K(J)\big)}\int_{T_K(J)}\eta\nu dA
 \le \frac{(\eta\nu)\big(S(J)\big)}{\nu\big(T_K(J)\big)}
 \lesssim \frac{(\eta\nu)\big(S(J)\big)}{\nu\big(S(J)\big)}
 \le N\frac{(\eta\nu)\big(S(L)\big)}{\nu\big(S(L)\big)}.
\]
For $q>1$ and $L\in\mathcal L_k$, the above inequality together with \eqref{eq:avgrowth} gives
$$
\begin{aligned}
 \int_{S(L)\setminus\bigcup\limits_{L'\in\mathcal L(L)}S(L')}\eta^q \nu dA
 \lesssim&
 \left(N\frac{(\eta\nu)\big(S(L)\big)}{\nu\big(S(L)\big)}\right)^{q-1}
(\eta\nu)\bigg(S(L)\setminus\bigcup\limits_{L'\in\mathcal L(L)}S(L')\bigg)\\
 \le&
 N^{q-1}(C_1N)^{k(q-1)}
 \left(\frac{(\eta\nu)\big(S(I)\big)}{\nu\big(S(I)\big)}\right)^{q-1}
 (\eta\nu)\big(S(L)\big).
 \end{aligned}
$$
Summing first over $L\in\mathcal L_k$ and then over $k\ge0$, and applying \eqref{eq:generationmass}, we have
\begin{align*}
 \int_{S(I)}\eta^q \nu dA
 &=\sum_{k\ge0}\sum_{L\in\mathcal L_k}\int_{S(L)\setminus\bigcup\limits_{L'\in\mathcal L(L)}S(L')}\eta^q \nu dA\\
 &\lesssim
 \left(\frac{(\eta\nu)\big(S(I)\big)}{\nu\big(S(I)\big)}\right)^{q-1}
 \sum_{k\ge0}(C_1 N)^{k(q-1)}
 \sum_{L\in\mathcal L_k}(\eta\nu)\big(S(L)\big)\\
 &\lesssim
 \left(\frac{(\eta\nu)\big(S(I)\big)}{\nu\big(S(I)\big)}\right)^{q-1}
 (\eta\nu)\big(S(I)\big)
 \sum_{k\ge0}\bigl((1-c_0)(C_1 N)^{q-1}\bigr)^k.
\end{align*}
Since $0<c_0<1$, we may choose $q>1$ sufficiently close to $1$ so that
$(1-c_0)(C_1 N)^{q-1}<1.$
Therefore
$$
 \frac1{\nu\big(S(I)\big)}\int_{S(I)}\eta^q \nu dA
 \lesssim \left(\frac{(\eta\nu)\big(S(I)\big)}{\nu\big(S(I)\big)}\right)^q,
$$ which implies
$\eta\in RH_{q,\nu}$. This completes the proof.
\end{proof}
We are now ready to prove Theorem \ref{main2}.
\vspace{8pt}

\noindent \textbf{Proof of Theorem \ref{main2}.}
It follows from Lemma \ref{normeq} that $T_f^\nu$ is compact on $A_\omega^p$ if and only if it is compact on $A_{\sigma_{\omega,\nu ,r}}^p$ for each fixed $r\geq R$. Then by Corollary \ref{BpRH}, it suffices to verify that there exists $q>1$ such that $\sigma_{\omega,\nu ,r}\nu^{-1}\in B_p(\nu)\cap RH_{q,\nu}$ and $(\sigma_{\omega,\nu ,r}\nu^{-1})^{-\frac{p'}{p}}\in RH_{q,\nu}$.

We first show that if $\omega\nu^{-1}\in B_p(\nu)$, then $\sigma_{\omega,\nu ,r}\nu^{-1}\in B_p(\nu)$.
Indeed, it follows from Lemma~\ref{normeq} that $\sigma_{\omega,\nu ,r}(S)\lesssim \omega(S)$ for all Carleson square $S$.
Moreover, we know that $\vartheta\in \mathcal{D}(\mathbb{D})$ since $\vartheta\nu^{-1}\in B_{p'}(\nu)$. Then Jensen's inequality together with Fubini's theorem and \eqref{DhatD} yields
$$
\begin{aligned}
\int_{S(a)} \bigg(\frac{\sigma_{\omega,\nu ,r}(\zeta)}{\nu(\zeta)}\bigg)^{-\frac{p'}{p}}\nu(\zeta) dA(\zeta)
\leq &\int_{S(a)} \frac{1}{\nu\big(D(\zeta,r)\big)}\left(\int_{D(\zeta,r)}\big(\omega\nu^{-1}\big)^{-\frac{p'}{p}}\nu dA\right)\nu(\zeta) dA(\zeta)\\
=& \int_{\{z\in\mathbb{D}:S(a)\cap D(z,r)\neq\o \}}\left( \int_{D(z,r)\cap S(a)}\frac{\nu(\zeta)dA(\zeta)}{\nu\big(D(\zeta,r)\big)}\right)
\vartheta(z)dA(z)\\
\lesssim&\vartheta (S(a)),\quad |a|>\tanh r.
\end{aligned}
$$
Then we have $\sigma_{\omega,\nu ,r}\nu^{-1}\in B_p(\nu)$, and consequently $(\sigma_{\omega,\nu ,r}\nu^{-1})^{-\frac{p'}{p}}\in B_{p'}(\nu)$.

Notice that $\sigma_{\omega,\nu ,r}\nu^{-1}$ is essentially constant on hyperbolically bounded regions whenever $r\geq R$. By Lemma \ref{invarRH}, it follows that $\sigma_{\omega,\nu ,r}\nu^{-1}\in RH_{q_1,\nu}$ for some $q_1>1$. Similarly, $(\sigma_{\omega,\nu ,r}\nu^{-1})^{-\frac{p'}{p}}\in RH_{q_2,\nu}$ for some $q_2>1$. Taking $q=\min\{q_1, q_2\}$ completes the proof.
\qed
\color{black}

\end{document}